\definecolor{darkgreen}{rgb}{0,0.5,0}
\newcommand{\Z}{{\mathbb Z}}
\newcommand{\Q}{{\mathbb Q}}
\newcommand{\R}{{\mathbb R}}
\newcommand{\C}{{\mathbb C}}
\newcommand{\F}{{\mathbb F}}
\newcommand{\BP}{{\mathbb P}}
\newcommand{\BA}{{\mathbb A}}
\newcommand{\To}{\longrightarrow}
\newcommand{\disc}{\operatorname{disc}}
\newcommand{\Pic}{\operatorname{Pic}}
\newcommand{\CC}{{\mathcal C}}
\newcommand{\CE}{{\mathcal E}}
\newcommand{\CJ}{{\mathcal J}}
\newcommand{\CK}{{\mathcal K}}
\newcommand{\CL}{{\mathcal L}}
\newcommand{\CO}{{\mathcal O}}
\newcommand{\CQ}{{\mathcal Q}}
\newcommand{\CT}{{\mathcal T}}
\newcommand{\CU}{{\mathcal U}}
\newcommand{\CV}{{\mathcal V}}
\newcommand{\CW}{{\mathcal W}}
\newcommand{\CX}{{\mathcal X}}
\newcommand{\eps}{\varepsilon}
\newcommand{\Sym}{\operatorname{Sym}}
\newcommand{\GL}{\operatorname{GL}}
\newcommand{\SL}{\operatorname{SL}}
\newcommand{\PSL}{\operatorname{PSL}}
\newcommand{\PGL}{\operatorname{PGL}}
\newcommand{\SO}{\operatorname{SO}}
\renewcommand{\O}{\operatorname{O}}
\newcommand{\half}{\frac{1}{2}}
\newcommand{\fD}{\mathfrak D}
\newcommand{\fm}{\mathfrak m}
\newcommand{\fW}{\mathfrak W}
\newcommand{\Res}{\operatorname{Res}}
\newcommand{\Tr}{\operatorname{Tr}}
\newcommand{\even}{{\text{\rm even}}}
\newcommand{\vd}{{\underline{\delta}}}
\newcommand{\vtau}{{\underline{\tau}}}
\newcommand{\vxi}{{\underline{\xi}}}
\newcommand{\vx}{{\underline{x}}}
\newcommand{\vz}{{\underline{\zeta}}}
\newcommand{\wt}{\operatorname{wt}}
\newcommand{\good}{{\text{\rm good}}}
\newcommand{\pr}{\operatorname{pr}}
\newtheorem{Theorem}{Theorem}[section]
\newtheorem{Lemma}[Theorem]{Lemma}
\newtheorem{Proposition}[Theorem]{Proposition}
\newtheorem{Corollary}[Theorem]{Corollary}
\theoremstyle{definition}
\newtheorem{Definition}[Theorem]{Definition}
\newtheorem{Example}[Theorem]{Example}
\newtheorem{Remark}[Theorem]{Remark}
\numberwithin{equation}{section}
\renewcommand{\star}{${}^\bigstar$}
\begin{document}

\title[Heights for Genus 3 Hyperelliptic Jacobians]%
      {An Explicit Theory of Heights \\ for Hyperelliptic Jacobians of Genus Three}

\author{Michael Stoll}
\address{Mathematisches Institut,
         Universit\"at Bayreuth,
         95440 Bayreuth, Germany.}
\email{Michael.Stoll@uni-bayreuth.de}
\date{\today}

\begin{abstract}
  We develop an explicit theory of Kummer varieties associated to Jacobians of
  hyperelliptic curves of genus~$3$, over any field~$k$ of characteristic~$\neq 2$.
  In particular, we provide explicit equations defining the Kummer variety~$\CK$
  as a subvariety of~$\BP^7$, together with explicit polynomials giving the
  duplication map on~$\CK$. A careful study of the degenerations of this map
  then forms the basis for the development of an explicit theory of heights
  on such Jacobians when $k$ is a number field. We use this input to obtain
  a good bound on the difference between naive and canonical height, which is
  a necessary ingredient for the explicit determination of the Mordell-Weil group.
  We illustrate our results with two examples.
\end{abstract}

\subjclass[2010]{14H40, 14H45, 11G10, 11G50, 14Q05, 14Q15}
\keywords{Kummer variety, hyperelliptic curve, genus 3, canonical height}

\maketitle


\section{Introduction}

The goal of this paper is to take up the approaches used to deal with
Jacobians and Kummer surfaces of curves of genus~$2$ by
Cassels and Flynn~\cite{CasselsFlynn} and by the author~\cite{StollH1,StollH2}
and extend them to hyperelliptic curves of genus~$3$. We always assume
that the base field~$k$ has characteristic~$\neq 2$.
A hyperelliptic curve~$\CC$ over~$k$ of genus~$3$ is then
given by an equation of the form $y^2 = f(x)$, where $f$ is a squarefree
polynomial of degree $7$ or~$8$ with coefficients in~$k$;
we take $\CC$ to be the smooth projective curve determined by this affine equation.
We denote the Jacobian variety of~$\CC$ by~$\CJ$.
Identifying points with their negatives on~$\CJ$, we obtain the Kummer variety
of~$\CJ$. It is known that the morphism $\CJ \to \BP^7$ given by the
linear system $|2\Theta|$ on~$\CJ$ (where $\Theta$ denotes the theta divisor)
induces an isomorphism of the Kummer variety with the image of~$\CJ$ in~$\BP^7$;
we denote the
image by $\CK \subset \BP^7$. Our first task is to find a suitable basis
of the Riemann-Roch space~$L(2 \Theta)$ and to give explicit equations defining~$\CK$,
thereby completing earlier work by Stubbs~\cite{Stubbs}, Duquesne~\cite{Duquesne}
and M\"uller~\cite{MuellerThesis,MuellerG3}. To this end, we make use of
the canonical identification of~$\CJ$ with $\CX = \Pic^4(\CC)$ and realize
the complement of $\Theta$ in~$\CX$ as the quotient of an explicit $6$-dimensional
variety~$\CV$ in~$\BA^{15}$ by the action of a certain group~$\Gamma$. This allows us
to identify the ring of regular functions on~$\CX \setminus \Theta$
with the ring of $\Gamma$-invariants in the coordinate ring of~$\CV$.
In this way, we obtain a natural basis of~$L(2\Theta)$, and we find the
quadric and the $34$~quartics that define~$\CK$; see Section~\ref{S:coords}.
We give the relation between the coordinates chosen here and those used
in previous work and discuss how transformations of the curve equation
induced by the action of~$\GL(2)$ on~$(x,z)$ act on our coordinates;
see Section~\ref{S:trans}. We then give a recipe that allows to decide
whether a $k$-rational point on~$\CK$ comes from a $k$-rational point on~$\CJ$
(Section~\ref{S:lift}).

The next task is to describe the maps $\CK \to \CK$ and $\Sym^2 \CK \to \Sym^2 \CK$
induced by multiplication by~$2$ and by $\{P, Q\} \mapsto \{P+Q, P-Q\}$
on~$\CJ$. We use the approach followed in~\cite{StollH1}: we consider the
action of a double cover of the $2$-torsion subgroup $\CJ[2]$ on the coordinate
ring of~$\BP^7$. This induces an action of~$\CJ[2]$ itself on forms of even
degree. We use the information obtained on the various eigenspaces and the
invariant subspaces in particular to obtain an explicit description of the duplication
map~$\vd$ and of the sum-and-difference map on~$\CK$. The study of the action
of~$\CJ[2]$ is done in Sections \ref{S:2tors} and~\ref{S:action}; the results
on the duplication map and on the sum-and-difference map are obtained in
Sections \ref{S:dup} and~\ref{S:sumdiff}, respectively. In Section~\ref{S:further},
we then study the degeneration of these maps that occur when we allow the
curve to acquire singularities. This is relevant in the context of bad reduction
and is needed as input for the results on the height difference bound.

We then turn to the topic motivating our study, which is the canonical
height $\hat{h}$ on the Jacobian, and, in particular, a bound on the
difference $h - \hat{h}$ between naive and canonical height. Such a bound
is a necessary ingredient for the determination of generators of the Mordell-Weil
group~$\CJ(k)$ (where $k$ now is a number field; in practice, usually $k = \Q$),
given generators of a finite-index subgroup. The difference $h - \hat{h}$
can be expressed in terms of the local `loss of precision' under~$\vd$ at
the various primes of bad reduction and the archimedean places of~$k$.
In analogy with~\cite{StollH1}, we obtain an estimate for this local
`loss of precision' in terms of the valuation of the discriminant of~$f$.
This is one of the main results of Section~\ref{S:heights}, together with
a statement on the structure of the local `height correction function',
which is analogous to that obtained in~\cite[Thm.~4.1]{StollH2}.
These results allow us to obtain reasonable bounds for the height difference.
We illustrate this by determining generators
of the Mordell-Weil group of the Jacobian of the curve $y^2 = 4x^7 - 4x + 1$.
We then use this result to determine the set of integral solutions of
the equation $y^2 - y = x^7 - x$, using the method of~\cite{BMSST};
see Section~\ref{S:example}.

In addition, we show in Section~\ref{S:twists} how one can obtain better bounds
(for a modified naive height) when the polynomial defining the curve is not primitive.
As an example, we determine explicit generators of the Mordell-Weil group
of the Jacobian of the curve given by the binomial coefficient equation
\[ \binom{y}{2} = \binom{x}{7} . \]

We have made available at~\cite{Files} files that can be read into Magma~\cite{Magma}
and provide explicit representations of the quartics defining the Kummer variety,
the matrices giving the action of $2$-torsion points, the polynomials defining
the duplication map and the matrix of bi-quadratic forms related to the
`sum-and-difference map'.

\subsection*{Acknowledgments}

I would like to thank Steffen M\"uller for helpful comments on a draft
version of this paper and for pointers to the literature.
The necessary computations were performed using the Magma computer
algebra system~\cite{Magma}. At~\cite{Files} we have made available the file
\texttt{Kum3-verification.magma}, which, when loaded into Magma,
will perform the computations necessary to verify a number of claims
made throughout the paper. These claims are marked by a star, like this\star.


\section{The Kummer Variety} \label{S:coords}

We consider a hyperelliptic curve of genus 3
over a field~$k$ of characteristic different from~$2$, given by the affine
equation
\[ \CC \colon y^2 = f_8 x^8 + f_7 x^7 + \ldots + f_1 x + f_0 = f(x) , \]
where $f$ is a squarefree polynomial of degree $7$ or~$8$.
(We do not assume that $\CC$ has a Weierstrass point at infinity, which
would correspond to $f$ having degree~$7$.)
Let $F(x,z)$ denote the octic binary form that is the homogenization of~$f$;
$F$ is squarefree.
Then $\CC$ has a smooth model in the weighted projective plane $\BP^2_{1,4,1}$
given by $y^2 = F(x,z)$. Here $x$ and~$z$ have weight~$1$ and $y$ has weight~$4$.
We denote the hyperelliptic involution on~$\CC$ by~$\iota$, so that
$\iota (x:y:z) = (x:-y:z)$.

As in the introduction, we denote the Jacobian variety of~$\CC$ by~$\CJ$.
We would like to find an explicit version  of the map
\[ \CJ \To \BP^7 \]
given by the linear system of twice the theta divisor; it
embeds the Kummer variety $\CJ/\{\pm 1\}$ into~$\BP^7$.
We denote the image by~$\CK$.

We note that the canonical class $\fW$ on~$\CC$ has degree~$4$. Therefore
$\CJ = \Pic^0_\CC$ is canonically isomorphic to $\CX = \Pic^4_\CC$, with
the isomorphism sending $\fD$ to~$\fD+\fW$. Then the map induced by~$\iota$
on~$\CX$ corresponds to multiplication by~$-1$ on~$\CJ$.
There is a canonical theta
divisor on~$\Pic^0_C$ whose support consists of the divisor classes of the form
$[(P_1) + (P_2)] - \fm$, where $\fm$ is the class of the polar
divisor~$(x)_\infty$; we have $\fW = 2 \fm$.
The support of the theta divisor is the locus of points on~$\CX$ that
are not represented by divisors in general position, where an effective
divisor$~\fD$ on~$\CC$ is \emph{in general position} unless
there is a point $P \in \CC$ such that $\fD \ge (P) + (\iota P)$.
This can be seen as follows. The image on~$\CX$ of a point $[(P_1) + (P_2)] - \fm$
on the theta divisor is represented by all effective divisors
of the form $(P_1) + (P_2) + (P) + (\iota P)$ for an arbitrary point $P \in \CC$.
If $P_2 \neq \iota P_1$, then the Riemann-Roch Theorem implies that
the linear system containing these divisors is one-dimensional, and so
\emph{all} divisors representing our point on~$\CX$ have this form;
in particular, there is no representative divisor in general position.
If $P_2 = \iota P_1$, then the linear system has dimension~$2$ and consists
of all divisors of the form $(P) + (\iota P) + (P') + (\iota P')$,
none of which is in general position.

We identify $\CJ$ and~$\CX$, and we denote the theta divisor on~$\CJ$
and its image on~$\CX$ by~$\Theta$. We write $L(n\Theta)$
for the Riemann-Roch space $L(\CX, n\Theta) \cong L(\CJ, n\Theta)$,
where $n \ge 0$ is an integer. It is known
that $\dim L(n \Theta) = n^3$. Since $\Theta$ is symmetric, the negation
map acts on~$L(n \Theta)$ (via $\phi \mapsto (P \mapsto \phi(-P))$),
and it makes sense to speak of even and odd functions in~$L(n \Theta)$
(with respect to this action). We write $L(n\Theta)^+$ for the subspace
of even functions. It is known that $\dim L(n\Theta)^+ = n^3/2 + 4$ for
$n$~even and $\dim L(n\Theta)^+ = (n^3+1)/2$ for $n$~odd.

We can parameterize effective degree~4 divisors in general position
as follows. Any such divisor~$\fD$ is given by a binary quartic form~$A(x,z)$
specifying the image of~$\fD$ on~$\BP^1$ under the hyperelliptic
quotient map $\pi \colon \CC \to \BP^1$, $(x:y:z) \mapsto (x:z)$, together with
another quartic binary form~$B(x,z)$ such that $y = B(x,z)$ on the
points in~$\fD$, with the correct multiplicity. (Note that by the
`general position' condition, $y$ is uniquely determined by $x$ and~$z$
for each point in the support of~$\fD$.) More precisely, we must have that
\begin{equation} \label{rel}
  B(x,z)^2 - A(x,z) C(x,z) = F(x,z)
\end{equation}
for a suitable quartic binary form~$C(x,z)$. We then have a statement
analogous to that given in~\cite[Chapter~4]{CasselsFlynn} for $\Pic^3$
of a curve of genus~$2$; see Lemma~\ref{RepLemma} below.
Before we can formulate it, we need some notation.

We let $Q$ be the ternary quadratic form $x_2^2 - x_1 x_3$. We write
\begin{equation} \label{eqn:D}
  D =  \begin{pmatrix} 0 & 0 & -1 \\ 0 & 2 & 0 \\ -1 & 0 & 0 \end{pmatrix}
\end{equation}
for the associated symmetric matrix (times~$2$) and
\[ \Gamma = \SO(Q) = \{\gamma \in \SL(3) : \gamma D \gamma^\top = D\} ; \]
then $-\Gamma = \O(Q) \setminus \SO(Q)$, and
$\pm\Gamma = \O(Q)$. We have the following elements in~$\Gamma$
(for arbitrary $\lambda$ and~$\mu$ in the base field):
\[  t_\lambda =
    \begin{pmatrix}
      \lambda & 0 & 0 \\ 0 & 1 & 0 \\ 0 & 0 & \lambda^{-1}
    \end{pmatrix}, \qquad
    n_\mu =
    \begin{pmatrix}
      1 & \mu & \mu^2 \\ 0 & 1 & 2\mu \\ 0 & 0 & 1
    \end{pmatrix} \quad\text{and}\quad
    w =
    \begin{pmatrix}
      0 & 0 & 1 \\ 0 & -1 & 0 \\ 1 & 0 & 0
    \end{pmatrix} ;
\]
these elements generate~$\Gamma$.

\begin{Lemma} \label{RepLemma}
  Two triples $(A,B,C)$ and $(A',B',C')$ satisfying~\eqref{rel} specify
  the same point on~$\CX$ if and only if $(A',B',C') = (A,B,C) \gamma$
  for some $\gamma \in \Gamma$.
  They represent opposite points (with respect to the involution on~$\CX$
  induced by~$\iota$) if and only if the relation above holds for
  some $\gamma \in -\Gamma$.
\end{Lemma}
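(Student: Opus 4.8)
The plan is to understand the parametrization $(A,B,C) \mapsto \fD$ explicitly enough to see both how to recover a point of $\CX$ from the triple and exactly how much ambiguity there is. Given a triple $(A,B,C)$ satisfying~\eqref{rel}, the associated effective degree-$4$ divisor $\fD$ in general position is the scheme of points $(x:y:z)$ on $\CC$ with $A(x,z) = 0$ and $y = B(x,z)$; equation~\eqref{rel} guarantees this is consistent with $y^2 = F(x,z)$, and $C$ is then forced to be $(B^2 - F)/A$. The first step is to check this is well-defined: $\fD$ (hence its class in $\CX$) depends only on $(A,B,C)$. Conversely I want to recover $(A:B:C)$, up to the $\Gamma$-action, from the point of $\CX$. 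The key observation is that $A$ is determined up to scaling by $\fD$ (it cuts out $\pi(\fD)$ on $\BP^1$ with multiplicity), and once $A$ is fixed, $B$ is determined modulo $A$ — i.e.\ only $B \bmod A$ matters, since changing $B$ by a multiple of $A$ does not change $y$ on the zero locus of $A$. So the natural parameter space is: a binary quartic $A$ up to scaling, together with $B \bmod A$, subject to $F \equiv B^2 \pmod A$.

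Next I would make precise the claim that the $\Gamma$-action on triples is exactly the reparametrization freedom. One has the scaling action $A \mapsto \lambda^2 A$ (via $t_\lambda$, reading off how $t_\lambda$, $n_\mu$, $w$ act on the coefficient vector of $(A,B,C)$ — this is presumably set up so that the three generators act as: rescaling $A$ versus $C$, translating $B$ by $\mu A$, and swapping the roles of $A$ and $C$). The orbit of $(A,B,C)$ under $t_\lambda$ and $n_\mu$ is then all triples $(\lambda^2 A, \lambda^2(B + \mu A), \ast)$, i.e.\ all representations with the \emph{same} $A$-locus but $A$ rescaled and $B$ shifted mod $A$ — which by the previous paragraph is precisely the set of triples giving the same $\fD$, \emph{provided} $A = A'$ up to scaling. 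The remaining case is $A \ne A'$: here one uses $w$ to swap to a representation in which $A$ is replaced by $C$ (note $C$ also cuts out an effective degree-$4$ divisor, namely the one with $y = -B$, which is $\iota(\fD')$ for the "complementary" divisor), and the point is that two divisors in general position representing the same class in $\CX = \Pic^4$ are $\fD$ and $\fD'$ with $\fD + \fD' \sim \fW = 2\fm$; concretely $\pi(\fD) + \pi(\fD')$ is a fibre-type divisor, which is exactly the statement that $A \cdot A'$ is proportional to... — this is where the genus-$3$ bookkeeping enters and must be done carefully. For the sign statement: $\iota$ sends $(A,B,C) \mapsto (A,-B,C)$ on the locus $y = B$, and $-I \notin \SL(3)$ while $\operatorname{diag}(1,-1,1) \in -\Gamma$ realizes $B \mapsto -B$ fixing $A$; so the negation on $\CX$ corresponds to an extra factor in $\O(Q) \setminus \SO(Q)$, giving the "opposite points $\iff \gamma \in -\Gamma$" clause.

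The main obstacle I expect is the "only if" direction in the case $A$ and $A'$ are not proportional: showing that any two general-position representatives of the same class in $\Pic^4$ are related by an element of $\Gamma$, rather than merely that the $\Gamma$-orbit lands in the right fibre. Concretely, one must show that if $\fD, \fD'$ are effective, degree $4$, in general position, and $\fD \sim \fD'$, then either they have the same image on $\BP^1$ (handled by the $t_\lambda, n_\mu$ part) or $\fD' = \fD''$ is the unique "Serre-dual" divisor with $\fD + \fD'' \sim 2\fm$, and in that case $w$ (composed with the affine part) carries $(A,B,C)$ to $(A',B',C')$. This requires knowing that a degree-$4$ effective divisor class in general position has at most a $1$-dimensional linear system (so at most two general-position representatives over $\bar k$, generically), which follows from Riemann–Roch on the genus-$3$ curve exactly as in the discussion of $\Theta$ above: $\ell(\fD) = \deg \fD - g + 1 + \ell(\fW - \fD) = 2 + \ell(\fW-\fD)$, and $\fW - \fD$ has degree $0$, so $\ell(\fD) = 2$ iff $\fD \sim \fW - \fD$, i.e.\ iff $\fD$ is its own Serre-dual, and $\ell(\fD) = 1$ otherwise. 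One then checks that $C$, which satisfies $B^2 - AC = F$ with $\deg C = 4$, cuts out (with $y = -B$) precisely the Serre-dual divisor's image, so that the $w$-swap $(A,B,C) \leftrightarrow (C,-B,A)$ (up to the affine part of $\Gamma$) matches passing to the Serre-dual; combining this with the $t_\lambda, n_\mu$ freedom covers all representatives. The remaining routine verification is that the listed generators $t_\lambda, n_\mu, w$ really generate all of $\SO(Q)$, which is standard (e.g.\ these are a maximal torus, a root subgroup, and a Weyl element for $\SO(3) \cong \PGL(2)$).
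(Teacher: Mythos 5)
Your high-level picture is right (the torus and unipotent part handles representatives with the same $A$-locus, the Weyl element $w$ handles the residual swap, the $-\Gamma$ coset corresponds to negating $B$), but the crucial step — showing every pair of general-position representatives of the same class is related by $\Gamma$ — is both incomplete and partly wrong. First, your Riemann--Roch bookkeeping is off. For a degree-$4$ divisor $\fD$ on a genus-$3$ curve, $\ell(\fD) - \ell(\fW-\fD) = 2$, and since $\deg(\fW-\fD)=0$ we get $\ell(\fD)=2$ unless $\fD\sim\fW$ (not ``iff $\fD\sim\fW-\fD$''), in which case $\ell(\fD)=3$; there is no case $\ell(\fD)=1$. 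So the linear system $|\fD|$ is always (at least) a pencil, and there are infinitely many effective representatives in general position, not ``at most two.'' A ``unique Serre-dual representative'' argument therefore cannot work as stated. Second, the claimed relation $\fD + \fD' \sim \fW = 2\fm$ is dimensionally inconsistent ($\deg = 8$ versus $4$); the relation that is actually used is $\fD + \iota\fD' \sim 2\fW = 4\fm$.

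More fundamentally, what is missing is the dichotomy that makes the argument work: if $\fD$ and $\fD'$ share \emph{any} point $P$ of their supports, subtracting $(P)$ gives linearly equivalent effective divisors of degree~$3$ in general position, and these are non-special and hence equal, so $\fD = \fD'$ and one is in the $t_\lambda, n_\mu$ case with $A' \propto A$. Otherwise the supports are disjoint, and one uses the explicit description of $L(4\fm)$ as spanned by $1, x, x^2, x^3, x^4, y$ to produce a function $y - \tilde{B}(x,z)$ with divisor $\fD + \iota\fD' - 4\fm$; this gives $\tilde{B}^2 - F = \text{const}\cdot A A'$, and after normalizing by a $t_\lambda$ one gets the chain $(A,B,C) \sim (A,\tilde{B},A') \stackrel{w}{\sim} (A',-\tilde{B},A) \sim (A',B',C')$. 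You gesture at this (``this is where the genus-$3$ bookkeeping enters and must be done carefully'') but do not carry it out, and your proposed mechanism (finite set of representatives, unique residual) would not deliver it. The sign statement is fine: $(A,-B,C)$ corresponds to $-t_{-1}\in -\Gamma$, as you say.
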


\begin{proof}
  We first show that two triples specifying the same point are in the
  same $\Gamma$-orbit. Let $\fD$ and~$\fD'$ be the effective divisors
  of degree~4 given by $A(x,z) = 0$, $y = B(x,z)$ and by
  $A'(x,z) = 0$, $y = B'(x,z)$, respectively. By assumption, $\fD$ and~$\fD'$ are
  linearly equivalent, and they are both in general position.
  If $\fD$ and~$\fD'$ share a point~$P$ in their supports, then subtracting~$P$
  from both $\fD$ and~$\fD'$, we obtain two effective divisors of degree~3
  in general position that are linearly equivalent. Since such divisors
  are non-special, they must be equal, hence $\fD = \fD'$. So $A$ and~$A'$
  agree up to scaling, and $B' - B$ is a multiple of~$A$:
  \[ A' = \lambda A, \qquad B' = B + \mu A, \qquad
     C' = \lambda^{-1}(C + 2 \mu B + \mu^2 A) ;
  \]
  then $(A',B',C') = (A,B,C) n_\mu t_\lambda$.
  So we can now suppose that the supports of $\fD$ and~$\fD'$ are disjoint.
  Then, denoting by $\iota{\fD'}$ the image of~$\fD'$ under the hyperelliptic
  involution, $\fD + \iota{\fD'}$ is a divisor of degree~8 in general position,
  which is in twice the canonical class, so it is linearly equivalent to~$4 \fm$.
  Since the Riemann-Roch space of that
  divisor on~$\CC$ is generated (in terms of the affine coordinates obtained by
  setting $z = 1$) by $1, x, x^2, x^3, x^4, y$, there is a function of the form
  $y - \tilde{B}(x,1)$ with $\tilde{B}$ homogeneous of degree~4 that has divisor
  $\fD + \iota{\fD'} - 4\fm$.
  Equivalently, $\fD + \iota{\fD'}$ is the intersection
  of~$\CC$ with the curve given by $y = \tilde{B}(x,z)$. This implies
  that $\tilde{B}^2 - F$ is a constant times $A A'$. Up to scaling $A'$ and~$C'$
  by $\lambda$ and~$\lambda^{-1}$ for a suitable~$\lambda$ (this corresponds
  to acting on $(A',B',C')$ by $t_\lambda \in \Gamma$), we have
  \[ \tilde{B}^2 - A A' = F , \]
  so that $(A, \tilde{B}, A')$ corresponds to~$\fD$ and $(A', -\tilde{B}, A)$
  corresponds to~$\fD'$. The argument above (for the case $\fD = \fD'$) shows
  that $(A,B,C)$ and $(A, \tilde{B}, A')$ are in the same $\Gamma$-orbit,
  and the same is true of $(A',B',C')$ and $(A', -\tilde{B}, A)$.
  Finally,
  \[ (A', -\tilde{B}, A) = (A, \tilde{B}, A') w . \]

  Conversely, it is easy to see that the generators of~$\Gamma$ given
  above do not change the linear equivalence class of the associated divisor:
  the first two do not even change the divisor, and the third replaces
  $\fD$ by the linearly equivalent divisor~$\iota\fD'$, where $\fD + \fD' \sim 2\fW$
  is the divisor of $y - B(x,z)$ on~$\CC$.

  For the last statement, it suffices to observe that $(A,-B,C)$ gives
  the point opposite to that given by~$(A,B,C)$; the associated matrix
  is $-t_{-1} \in -\Gamma$.
\end{proof}

We write $A$, $B$, $C$ as follows.
\begin{align*}
  A(x,z) &= a_4 x^4 + a_3 x^3 z + a_2 x^2 z^2 + a_1 x z^3 + a_0 z^4 \\
  B(x,z) &= b_4 x^4 + b_3 x^3 z + b_2 x^2 z^2 + b_1 x z^3 + b_0 z^4 \\
  C(x,z) &= c_4 x^4 + c_3 x^3 z + c_2 x^2 z^2 + c_1 x z^3 + c_0 z^4
\end{align*}
and use $a_0, \dots, a_4, b_0 \dots, b_4, c_0, \dots, c_4$ as affine
coordinates on~$\BA^{15}$. We arrange these coefficients into a matrix
\begin{equation} \label{eqn:L}
  L =
   \begin{pmatrix}
     a_0 & a_1 & a_2 & a_3 & a_4 \\
     b_0 & b_1 & b_2 & b_3 & b_4 \\
     c_0 & c_1 & c_2 & c_3 & c_4
   \end{pmatrix} .
\end{equation}
Then $\gamma \in \pm\Gamma$ acts on~$\BA^{15}$ via multiplication
by~$\gamma^\top$ on the left on~$L$.
Since there is a multiplicative group sitting inside~$\Gamma$ acting by
$(A,B,C) \cdot \lambda = (\lambda A, B, \lambda^{-1} C)$,
any $\Gamma$-invariant polynomial must be a linear combination of monomials
having the same number of $a_i$ and $c_j$. Hence in any term of a homogeneous
$\Gamma$-invariant polynomial of degree~$d$, the number of factors~$b_i$ has
the same parity as~$d$. This shows that such a $\Gamma$-invariant polynomial
is even with respect to~$\iota$ if $d$ is even, and odd if $d$ is odd.

It is not hard to see that there are no $\Gamma$-invariant polynomials of degree~1:
by the above, they would have to be a linear combination of the~$b_i$,
but the involution $(A,B,C) \mapsto (C,-B,A) = (A,B,C) w$
negates all the~$b_i$. It is also not hard to check that the space of invariants
of degree~2 is spanned by the coefficients of the quadratic form
\[ B_l^2 - A_l C_l \in \Sym^2 \langle x_0, x_1, x_2, x_3, x_4 \rangle , \]
where
\begin{align*}
  A_l &= a_0 x_0 + a_1 x_1 + a_2 x_2 + a_3 x_3 + a_4 x_4 \\
  B_l &= b_0 x_0 + b_1 x_1 + b_2 x_2 + b_3 x_3 + b_4 x_4 \\
  C_l &= c_0 x_0 + c_1 x_1 + c_2 x_2 + c_3 x_3 + c_4 x_4
\end{align*}
are linear forms in five variables. We write
\[ B_l^2 - A_l C_l = \sum_{0 \le i \le j \le 4} \eta_{ij} x_i x_j , \]
so that $\eta_{ii} = b_i^2 - a_i c_i$ and for $i < j$,
$\eta_{ij} = 2 b_i b_j - a_i c_j - a_j c_i$. Up to scaling, the
quadratic form corresponds to the symmetric matrix
\begin{equation} \label{E:etamat}
  L^\top D L =
   \begin{pmatrix}
     2 \eta_{00} & \eta_{01} & \eta_{02} & \eta_{03} & \eta_{04} \\
     \eta_{01} & 2 \eta_{11} & \eta_{12} & \eta_{13} & \eta_{14} \\
     \eta_{02} & \eta_{12} & 2 \eta_{22} & \eta_{23} & \eta_{24} \\
     \eta_{03} & \eta_{13} & \eta_{23} & 2 \eta_{33} & \eta_{34} \\
     \eta_{04} & \eta_{14} & \eta_{24} & \eta_{34} & 2 \eta_{44}
   \end{pmatrix} ,
\end{equation}
and the image~$\CQ$ of the map $q \colon \BA^{15} \to \Sym^2 \BA^{5}$ given by this matrix
consists of the matrices of rank at most~$3$; it is therefore defined
by the $15$~different quartics obtained as $4 \times 4$-minors of this matrix.

Scaling $x$ by~$\lambda$ corresponds to scaling $a_j, b_j, c_j$ by~$\lambda^j$.
This introduces another grading on the coordinate ring of our~$\BA^{15}$;
we call the corresponding degree the \emph{weight}. We then have
$\wt(a_j) = \wt(b_j) = \wt(c_j) = j$ and therefore $\wt(\eta_{ij}) = i+j$.
The $15$~quartics defining~$\CQ$ have weights
\[ 12, 13, 14, 14, 15, 15, 16, 16, 16, 17, 17, 18, 18, 19, 20. \]
We will reserve the word \emph{degree} for the degree in terms of the~$\eta_{ij}$;
then it makes sense to set $\deg(a_j) = \deg(b_j) = \deg(c_j) = \half$.

We let $\CV \subset \BA^{15}$ be the affine variety
given by~\eqref{rel}. The defining equations of~$\CV$ then read
\[ \renewcommand{\arraystretch}{1.2}
   \begin{array}{r@{{}={}}l}
  b_0^2 - a_0 c_0 & f_0  \\
  2 b_0 b_1 - (a_0 c_1 + a_1 c_0) & f_1 \\
  2 b_0 b_2 + b_1^2 - (a_0 c_2 + a_1 c_1 + a_2 c_0) & f_2 \\
  2 b_0 b_3 + 2 b_1 b_2 - (a_0 c_3 + a_1 c_2 + a_2 c_1 + a_3 c_0) & f_3 \\
  2 b_0 b_4 + 2 b_1 b_3 + b_2^2
   - (a_0 c_4 + a_1 c_3 + a_2 c_2 + a_3 c_1 + a_4 c_0) & f_4 \\
  2 b_1 b_4 + 2 b_2 b_3 - (a_1 c_4 + a_2 c_3 + a_3 c_2 + a_4 c_1) & f_5 \\
  2 b_2 b_4 + b_3^2 - (a_2 c_4 + a_3 c_3 + a_4 c_2) & f_6 \\
  2 b_3 b_4 - (a_3 c_4 + a_4 c_3) & f_7 \\
  b_4^2 - a_4 c_4 & f_8 .
   \end{array}
\]
In terms of the $\eta_{ij}$, we have
\begin{gather*}
  \eta_{00} = f_0, \quad \eta_{01} = f_1, \quad \eta_{02} + \eta_{11} = f_2, \quad
  \eta_{03} + \eta_{12} = f_3, \quad \eta_{04} + \eta_{13} + \eta_{22} = f_4, \\
  \eta_{14} + \eta_{23} = f_5, \quad \eta_{24} + \eta_{33} = f_6, \quad
  \eta_{34} = f_7, \quad \eta_{44} = f_8 ;
\end{gather*}
in particular, the image of~$\CV$ under~$q$ is a linear `slice'~$\CW$
of~$\CQ$, cut out by the nine linear equations above (recall that the $\eta_{ij}$
are coordinates on the ambient space~$\Sym^2 \BA^{5}$ of~$\CQ$).
It is then natural to define $\deg(f_j) = 1$ and $\wt(f_j) = j$.

By Lemma~\ref{RepLemma}, the quotient $\CV/\Gamma$ of~$\CV$ by the action
of~$\Gamma$ can be identified with $\CU \mathrel{:=} \CX \setminus \Theta$,
the complement of the theta divisor in~$\CX$. Since the map~$q$ is given by $\pm \Gamma$-invariants,
we obtain a surjective morphism $\CK \setminus \kappa(\Theta) \to \CW$.
We will see that it is actually an isomorphism.

Functions in the Riemann-Roch space $L(n \Theta)$ will be represented
by $\Gamma$-invariant polynomials in the $a_i$, $b_i$, $c_i$.
Similarly, functions in the even part $L(n \Theta)^+$ of this space
are represented by
$\pm \Gamma$-invariant polynomials. A $\Gamma$-invariant polynomial
that is homogeneous of degree~$n$ in the $a_i$, $b_i$, $c_i$ will conversely
give rise to a function in~$L(n \Theta)$.
Modulo the relations defining~$\CV$,
there are six independent such invariants of degree~$2$. We choose
\[ \eta_{02},\; \eta_{03},\; \eta_{04},\; \eta_{13},\; \eta_{14},\; \eta_{24} \]
as representatives.
As mentioned above, invariants of even degree are $\pm\Gamma$-invariant
and so give rise to even functions on~$\CX$ with respect to~$\iota$,
whereas invariants of odd degree give rise to odd functions on~$\CX$.
Together with the constant function $1$, we have found seven functions
in~$L(2 \Theta) = L(2\Theta)^+$.
Since $\dim L(2\Theta) = 2^3 = 8$, we are missing one function. We will see
that is given by some quadratic form in the~$\eta_{ij}$ above, with the property
that it does not grow faster than them when we approach~$\Theta$.

To find this quadratic form,
we have to find out what $(\eta_{02} : \eta_{03} : \ldots : \eta_{24})$
tends to as we approach the point represented by $(x_1,y_1)+(x_2,y_2)+\fm$
on~$\CX$. A suitable approximation, taking $y = \ell(x)$ to be the
line interpolating between the two points,
\[ B(x,1) = \lambda (x-x_0)(x-x_1)(x-x_2) + \ell(x) , \]
$A_0(x) = (x-x_1)(x-x_2)$, $\varphi_{\pm}(x) = (f(x) \pm \ell(x)^2)/A_0(x)^2$,
$\psi(x) = \ell(x)/A_0(x)$, and
\[ A(x,1) = A_0(x)
              \bigl(\lambda^2 (x-x_0)^2
                      + \bigl(2 \lambda \psi(x_0) - \varphi_+'(x_0)\bigr)(x-x_0)
                      - \varphi_-(x_0)
                      + O(\lambda^{-1})\bigr) ,
\]
shows that\star\label{asymp}
\begin{align*}
  \eta_{02} &= -\lambda^2 (x_1 x_2)^2 + O(\lambda) \\
  \eta_{03} &= \lambda^2 (x_1+x_2) x_1 x_2 + O(\lambda) \\
  \eta_{04} &= -\lambda^2 x_1 x_2 + O(\lambda) \\
  \eta_{13} &= -\lambda^2 (x_1^2 + x_2^2) + O(\lambda) \\
  \eta_{14} &= \lambda^2 (x_1+x_2) + O(\lambda) \\
  \eta_{24} &= -\lambda^2 + O(1)
\end{align*}
as $\lambda \to \infty$. There are various quadratic expressions in these
that grow at most like~$\lambda^3$, namely
\begin{gather*}
  2 \eta_{04} \eta_{24} + \eta_{13} \eta_{24} - \eta_{14}^2, \quad
  \eta_{03} \eta_{24} - \eta_{04} \eta_{14}, \quad
  \eta_{02} \eta_{24} - \eta_{04}^2, \\
  \eta_{02} \eta_{14} - \eta_{03} \eta_{04}, \quad
  2 \eta_{02} \eta_{04} + \eta_{02} \eta_{13} - \eta_{03}^2
\end{gather*}
(they provide five independent even functions in $L(3\Theta)$ modulo~$L(2\Theta)$) and
\begin{equation} \label{etadef}
  \eta = \eta_{02} \eta_{24} - \eta_{03} \eta_{14}
            + \eta_{04}^2 + \eta_{04} \eta_{13} ,
\end{equation}
which in fact only grows like~$\lambda^2$ and therefore gives us the missing
basis element of~$L(2 \Theta)$. We find that\star
\[ \eta = \lambda^2 \frac{G(x_1,x_2) - 2 y_1 y_2}{(x_1-x_2)^2} + O(\lambda) , \]
where
\[ G(x_1, x_2) = 2 \sum_{j=0}^4 f_{2j} (x_1 x_2)^{j}
                  + (x_1 + x_2) \sum_{j=0}^3 f_{2j+1} (x_1 x_2)^{j} .
\]
(Note the similarity with the fourth Kummer surface coordinate
in the genus~$2$ case; see~\cite{CasselsFlynn}.)

The map $\CX \to \BP^7$ we are looking for is then given by
\[ (1 : \eta_{24} : \eta_{14} : \eta_{04} : \eta_{04} + \eta_{13}
      : \eta_{03} : \eta_{02} : \eta) .
\]
We use $(\xi_1, \ldots, \xi_8)$ to denote these coordinates (in the given order).
The reason for setting $\xi_5 = \eta_{04} + \eta_{13}$ rather than $\eta_{13}$
is that this leads to nicer formulas later on. For example, we then have
the simple quadratic relation
\begin{equation} \label{quadrel}
  \xi_1 \xi_8 - \xi_2 \xi_7 + \xi_3 \xi_6 - \xi_4 \xi_5 = 0 .
\end{equation}
Regarding degree and weight, we have, writing $\vxi = (\xi_1,\xi_2,\ldots,\xi_8)$, that
\[ \deg(\vxi) = (0,1,1,1,1,1,1,2) \quad\text{and}\quad
   \wt(\vxi) = (0,6,5,4,4,3,2,8) .
\]

It is known that the image~$\CK$ of the Kummer variety in~$\BP^7$
of a generic hyperelliptic Jacobian of genus~$3$
is given by a quadric and $34$ independent quartic relations that are not multiples
of the quadric; see~\cite[Thm.~3.3]{MuellerG3}. (For this, we can work over an
algebraically closed field, so that we can change coordinates to move
one of the Weierstrass points to infinity so that we are in the setting
of~\cite{MuellerG3}.) The quadric is just~\eqref{quadrel}.
It is also known~\cite[Prop.~3.1]{MuellerG3} that $\CK$ is defined by
quartic equations. Since there are $36$~quartic multiples of the quadric~\eqref{quadrel},
the space of quartics in eight variables has dimension~$330$ and the
space~$L(8\Theta)^+$ has dimension~$260$, there must be at least $34$~further
independent quartics vanishing on~$\CK$: the space of quartics vanishing
on~$\CK$ is the kernel of $\Sym^4 L(2 \Theta) \to L(8 \Theta)^+$, which
has dimension $\ge 70$. We can find these quartics as follows.

There are 15~quartic relations in
$(\xi_1, \xi_2, \xi_3, \xi_4, \xi_5, \xi_6, \xi_7)$ coming from
the quartics defining~$\CQ$.
They are given by the $4 \times 4$ minors of the matrix~\eqref{E:etamat},
which restricted to~$\CV$ is
\begin{equation} \label{eqn:Matrix}
  M =
   \begin{pmatrix}
     2 f_0 \xi_1 & f_1 \xi_1 & \xi_7 & \xi_6 & \xi_4 \\
     f_1 \xi_1 & 2 (f_2 \xi_1 - \xi_7) & f_3 \xi_1 - \xi_6 & \xi_5 - \xi_4 & \xi_3 \\
     \xi_7 & f_3 \xi_1 - \xi_6 & 2 (f_4 \xi_1 - \xi_5) & f_5 \xi_1 - \xi_3
        & \xi_2 \\
     \xi_6 & \xi_5 - \xi_4 & f_5 \xi_1 - \xi_3 & 2 (f_6 \xi_1 - \xi_2) & f_7 \xi_1 \\
     \xi_4 & \xi_3 & \xi_2 & f_7 \xi_1 & 2 f_8 \xi_1
   \end{pmatrix} .
\end{equation}
Since these relations do not involve~$\xi_8$, they cannot be multiples of
the quadratic relation. We find $55$~further independent~quartics vanishing on~$\CK$
(and thence a basis of the `new' space of quartics that are not
multiples of the quadratic relation) by searching for polynomials
of given degree and weight that vanish on~$\CV$ when pulled back to~$\BA^{15}$.
Removing those that are multiples of the invariant quadric,
we obtain quartics with the following $34$~pairs of degree and weight:
\begin{align*}
  \deg = 4\colon &\quad \wt = 12, 13, 14, 14, 15, 15, 16, 16, 16, 17, 17, 18, 18, 19, 20; \\
  \deg = 5\colon &\quad \wt = 17, 18, 18, 19, 19, 20, 20, 20, 21, 21, 22, 22, 23; \\
  \deg = 6\colon &\quad \wt = 22, 23, 24, 24, 25, 26.
\end{align*}
(Recall that `degree' refers to the degree in terms of the original~$\eta_{ij}$.)
These quartics are given in the file \texttt{Kum3-quartics.magma} at~\cite{Files}.
The quartics are scaled so that their coefficients are in~$\Z[f_0,\ldots,f_8]$.
The 15~quartics of degree~4 are exactly those obtained as $4 \times 4$-minors
of the matrix~$M$ above.

\begin{Lemma} \label{L:quartics_indep}
  Let $f_0, \ldots, f_8 \in k$ be arbitrary. Then the $70$~quartics constructed
  as described above are linearly independent over~$k$.
\end{Lemma}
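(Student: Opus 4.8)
The statement is about a family of $70$ quartic forms in $\xi_1,\dots,\xi_8$ whose coefficients lie in $\Z[f_0,\dots,f_8]$; we must show that for \emph{every} specialization of the $f_i$ to a field $k$, the resulting quartics remain linearly independent. The plan is to reduce this to a single generic computation. A linear relation $\sum_i c_i Q_i = 0$ among the quartics $Q_1,\dots,Q_{70}$ (with $c_i \in k$) is equivalent to a point in the kernel of the $70 \times 330$ matrix $N$ whose rows are the coefficient vectors of the $Q_i$ expanded in the monomial basis of $\Sym^4\langle \xi_1,\dots,\xi_8\rangle$. This matrix has entries in $\Z[f_0,\dots,f_8]$. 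The claim is exactly that $N$ has rank $70$ after \emph{any} specialization, i.e. that the $70 \times 70$ minors of $N$ generate the unit ideal of $k[f_0,\dots,f_8]$ — or, more modestly and sufficiently, that for each prime $p$ (including the "characteristic $0$" case) there is \emph{some} $70 \times 70$ minor of $N$ that is a nonzero element of $k$ for the given $f_i$. The honest way to organize this is: first prove independence over the rational function field $\Q(f_0,\dots,f_8)$, then handle the finitely many "bad" specializations separately.

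\textbf{Step 1: independence over the generic field.} Work over $K = \Q(f_0,\dots,f_8)$. I would exhibit an explicit point of $\CV(\overline{K})$ — equivalently a triple $(A,B,C)$ over some extension satisfying~\eqref{rel} — that we can evaluate, and more usefully, use the degree/weight bigrading. The $70$ quartics split according to the $(\deg,\wt)$ pairs listed before the lemma (the $15$ minors of $M$ have $\deg = 4$ and the tabulated weights; the $34+15+\dots$ further ones are listed by the displayed table, plus the $36$ multiples of the quadric~\eqref{quadrel}, totalling $15+34+21 = 70$). A linear relation among forms of distinct $(\deg,\wt)$ bidegree forces the coefficients of each bidegree-homogeneous piece to vanish separately. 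So it suffices to check independence \emph{within} each bidegree class. Several of these classes are singletons and need nothing; the remaining small clusters (e.g. the two quartics of $\wt = 14$ and $\deg = 4$, etc.) can be checked by evaluating at a handful of points of $\CV$. Concretely: since $q$ is $\pm\Gamma$-invariant and $\CV/\Gamma = \CU = \CX \setminus\Theta$, a single point $P \in \CX(\overline{K})$ in general position gives a column vector $\vxi(P) \in \overline{K}^8$, and $Q_i(\vxi(P)) = 0$ for all our $Q_i$; what we want instead are points \emph{off} $\CK$. But the $Q_i$ only vanish on $\CK$, so for independence I would instead pick generic monomials: evaluate the coefficient matrix $N$ over $K$ and show directly, by a rank computation, that it has rank $70$. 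This is a finite linear-algebra fact over $\Q(f_0,\dots,f_8)$; it is the content of the starred verification file, and I would cite it as such (it is marked\star\ in the paper's scheme).

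\textbf{Step 2: the bad specializations.} Having fixed, for the generic case, a specific $70 \times 70$ submatrix $N_0$ of $N$ with $\det N_0 \neq 0$ in $\Z[f_0,\dots,f_8]$, the only specializations that could fail are those $(f_0,\dots,f_8)$ with $\det N_0 = 0$ — but a priori a different choice of submatrix, and possibly a different prime, might be needed for those. The cleanest route is: compute the ideal $I \subseteq \Z[f_0,\dots,f_8]$ generated by all $70 \times 70$ minors of $N$, and verify $I = (1)$. Since the construction of the quartics is uniform in the $f_i$ (they were found by a weighted-degree search with coefficients in $\Z[f_0,\dots,f_8]$), one expects the minors to have no common zero; verifying $I = (1)$ is again a finite Gröbner-basis computation over $\Z$, which I would delegate to the verification file. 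If $I \neq (1)$ — which I do not expect — one would have to examine the (necessarily low-dimensional) bad locus and argue geometrically there, but the uniformity of the construction makes this unlikely.

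\textbf{Main obstacle.} The proof is not conceptually deep; the real work is the computation in Step 1 (rank of a $70 \times 330$ matrix over $\Q(f_0,\dots,f_8)$) and, more delicately, the uniformity in Step 2 over all characteristics $\neq 2$. The bigrading by $(\deg,\wt)$ is the key structural device that cuts the $70$-dimensional problem into independent pieces of size at most a handful each, making the verification tractable; without it one would face the full $70$-dimensional linear algebra over a function field. The likely subtlety is small-characteristic behaviour — e.g. $p = 3, 5, 7$, where the integer coefficients appearing in the $Q_i$ or in the minors could accidentally vanish — which is precisely why the statement is phrased for arbitrary $f_0,\dots,f_8 \in k$ and why an ideal-membership check over $\Z$, rather than a single determinant over $\Q$, is the correct form of the verification.
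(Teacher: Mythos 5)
Your overall strategy --- reduce the statement to the assertion that the $70\times 330$ integer-coefficient matrix $N$ over $\Z[f_0,\dots,f_8]$ has full rank after every specialization, and verify this via its $70\times 70$ minors --- is the right one, and it does arrive at a sufficient condition. But the paper's proof is both simpler and stronger than what you propose, and your closing remark about the ``correct form of the verification'' is misjudged. The paper simply exhibits $70$ monomials for which the corresponding $70\times 70$ minor of $N$ equals $\pm 1$ as a \emph{constant} in $\Z[f_0,\dots,f_8]$. A single minor that is a unit of $\Z$ trivially remains nonzero under every ring homomorphism $\Z[f_0,\dots,f_8]\to k$, whatever the characteristic and whatever the $f_i$, so there is nothing left to check. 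This makes your Step~2 (a Gr\"obner basis computation over $\Z$ to verify that the ideal of all $70\times 70$ minors is the unit ideal) unnecessary, and it shows that your dismissal of ``a single determinant'' is wrong: the alternative to an ideal-membership check is not a single determinant over $\Q$, but a single determinant that is a unit in $\Z$ --- which is exactly what is found. Your Step~1 (splitting by the $(\deg,\wt)$-bigrading) is a perfectly reasonable device for reducing the search for such a minor to smaller blocks, and it may well be how one would \emph{discover} a good set of monomials in practice, but it is not needed to \emph{state} or \emph{verify} the final claim. As a small correction to your bookkeeping: the $70$ quartics decompose as $36$ quartic multiples of the quadric~\eqref{quadrel} plus the $34$ ``new'' quartics ($15$ of $\deg=4$, $13$ of $\deg=5$, $6$ of $\deg=6$), not as $15+34+21$.
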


\begin{proof}
  We can find\star\ $70$ monomials such that the $70 \times 70$-matrix formed by the
  coefficients of the quartics with respect to these monomials has determinant~$\pm 1$.
\end{proof}

Note that regarding~$k$, this is a slight improvement over~\cite[Lemma~3.2]{MuellerG3},
where $k$ was assumed to have characteristic~$\neq 2,3,5$.

We now show that these quartics indeed give all the relations.

\begin{Lemma} \label{L:Mumford_even}
  The natural map $\Sym^2 L(4 \Theta)^+ \to L(8 \Theta)^+$ is surjective.
\end{Lemma}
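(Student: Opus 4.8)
The plan is to reduce the statement to the more classical fact that the multiplication maps $\Sym^2 L(2\Theta) \to L(4\Theta)$ and $L(2\Theta) \otimes L(4\Theta) \to L(6\Theta)$ are surjective, and then to combine these with the even/odd grading. Concretely, I would first recall that $L(2\Theta) = L(2\Theta)^+$ (the linear system $|2\Theta|$ consists entirely of even functions, since $\dim L(2\Theta)^+ = 2^3/2 + 4 = 8 = \dim L(2\Theta)$), so that $\Sym^2 L(2\Theta)$ already maps into $L(4\Theta)^+$. By a theorem of Mumford (or Koizumi/Sekiguchi; this is the genus-$3$ analogue of what is used in~\cite{CasselsFlynn} and~\cite{StollH1}), the map $\Sym^2 L(2\Theta) \to L(4\Theta)$ is surjective. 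Hence $L(4\Theta)^+$ is spanned by products $\xi_i \xi_j$ of the eight coordinates, i.e.\ by degree-$2$ monomials in the $\xi$'s (equivalently, in the $\eta_{ij}$ together with the extra generator $\eta$). This already tells us that $L(4\Theta)^+$ has a completely explicit set of generators.

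Next I would use this to handle $L(8\Theta)^+$. Since $L(8\Theta)^+$ has dimension $8^3/2 + 4 = 260$, and since $\Sym^4 L(2\Theta)$ surjects onto $L(8\Theta)$ (again by Mumford's theorem, applied with $n = 2$ and the fourth power, or by iterating the surjectivity of $L(2\Theta)\otimes L(2n\Theta)\to L((2n+2)\Theta)$ for $n=1,2,3$), every even function of degree $8$ is a sum of degree-$4$ monomials in the $\xi_i$. But a degree-$4$ monomial $\xi_i\xi_j\xi_k\xi_l$ factors as $(\xi_i\xi_j)(\xi_k\xi_l)$, and by the previous paragraph each quadratic monomial $\xi_i\xi_j$ represents an element of $L(4\Theta)^+$. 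Therefore every generator of $L(8\Theta)^+$ lies in the image of the multiplication map $\Sym^2 L(4\Theta)^+ \to L(8\Theta)^+$, which gives the desired surjectivity.

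The one genuine input here is the surjectivity of $\Sym^4 L(2\Theta) \to L(8\Theta)$ (or the chain of surjections $L(2\Theta)\otimes L(2n\Theta)\to L((2n+2)\Theta)$). This is precisely the content of the classical projective-normality results for abelian varieties embedded by $|2\Theta|$: for $\ell \ge 2$ the line bundle $\CL^{\otimes \ell}$ associated to $2\Theta$ is normally generated, so $\Sym^m H^0(\CL) \to H^0(\CL^{\otimes m})$ is surjective for all $m \ge 1$. I would cite Mumford's ``On the equations defining abelian varieties'' (or Koizumi, ``Theta relations and projective normality of abelian varieties'') for this. Everything else is bookkeeping with the even/odd decomposition, using only that $L(2\Theta)$ is entirely even and that the product of two even functions is even. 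The main obstacle, then, is not in the argument itself but in making sure the cited normal-generation statement is available in the stated generality (arbitrary $k$ of characteristic $\ne 2$); since $|2\Theta|$ is very ample and these results hold over any field (indeed any base scheme), this causes no trouble, and one could alternatively deduce the needed surjectivity directly from the explicit generators already exhibited, by a dimension count against the $34$ quartics of Lemma~\ref{L:quartics_indep}.
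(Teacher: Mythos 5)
The key claim underlying your argument --- that $\Sym^2 L(2\Theta) \to L(4\Theta)$ (equivalently, to $L(4\Theta)^+$, since $L(2\Theta)$ consists of even functions) is surjective --- is false. Both spaces have dimension~$36$, but the quadratic relation~\eqref{quadrel} gives a one-dimensional kernel, so the image has dimension~$35$ and the cokernel is one-dimensional; this is stated explicitly in the paper in the proof of Corollary~\ref{C:sym4}, and the missing generator $\Xi$ of~$L(4\Theta)^+$ is constructed only in Section~\ref{S:dup}. Mumford's normal-generation theorem gives surjectivity of $\Sym^m L(n\Theta) \to L(mn\Theta)$ only for $n \ge 3$; the case $n = 2$ is precisely the one it does not cover, and in genus~$3$ the Kummer variety in its $|2\Theta|$-embedding is in fact \emph{not} projectively normal. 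So ``$L(4\Theta)^+$ is spanned by the $\xi_i\xi_j$'' and the appeal to normal generation of the $|2\Theta|$-bundle both fail.

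Your second step compounds the problem with circularity: the surjectivity of $\Sym^4 L(2\Theta) \to L(8\Theta)^+$ is exactly Corollary~\ref{C:sym4}, and the paper's proof of that corollary \emph{uses} the lemma you are asked to prove, together with the special properties of~$\Xi$ (namely that $\xi_i\xi_j\Xi$ and $\Xi^2$ are expressible as quartics in the~$\xi_i$). It is not an independently available classical input. The paper instead proves the lemma directly by modifying Mumford's proof that $\Sym^2 L(4\Theta) \to L(8\Theta)$ is surjective (the case $n = 4 \ge 3$, which \emph{is} covered by his theorem) so that it works on the even subspaces: one replaces the delta functions by the symmetrized combinations $\delta_{a+b}+\delta_{-a-b}$, runs the same Fourier-theoretic identity over the theta group, and uses the non-vanishing statement~(*) of~\cite[p.~339]{Mumford} to conclude that every $\delta_a + \delta_{-a}$ lies in the image. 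That refinement, not a reduction to $L(2\Theta)$, is the genuine content of the lemma.
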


\begin{proof}
  Mumford shows~\cite[\S4, Thm.~1]{Mumford} that $\Sym^2 L(4 \Theta) \to L(8 \Theta)$
  is surjective. The proof can be modified to give the corresponding result
  for the even subspaces, as follows (we use the notations of~\cite{Mumford}).
  We work with the even functions $\delta_{a+b} + \delta_{-a-b}$ and $\delta_{a-b} + \delta_{-a+b}$.
  This gives
  \begin{align*}
    \sum_{\eta \in Z_2} l(\eta) &(\delta_{a+b+\eta}+\delta_{-a-b-\eta}) * (\delta_{a-b+\eta} + \delta_{-a+b-\eta}) \\
       &= \Bigl(\sum_{\eta \in Z_2} l(\eta) q_1(b+\eta)\Bigr)
           \Bigl(\sum_{\eta \in Z_2} l(\eta) (\delta_{a+\eta} + \delta_{-a-\eta})\Bigr) \\
       &\qquad{} + \Bigl(\sum_{\eta \in Z_2} l(\eta) q_1(a+\eta)\Bigr)
                    \Bigl(\sum_{\eta \in Z_2} l(\eta) (\delta_{b+\eta} + \delta_{-b-\eta})\Bigr) .
  \end{align*}
  We fix the homomorphism $l \colon Z_2 \to \{\pm 1\}$ and the class of $a \bmod K(\delta)$.
  By~(*) in~\cite[p.~339]{Mumford} there is some~$b$ in this class such that
  $\sum_\eta l(\eta) q(b+\eta) \neq 0$. Taking $a = b$, we see that
  \[ \Delta(b) := \sum_\eta l(\eta) (\delta_{b+\eta} + \delta_{-b-\eta}) \]
  is in the image. Using this, we see that for all other~$a$ in the class, $\Delta(a)$
  is also in the image. Inverting the Fourier transform, we find that all $\delta_a + \delta_{-a}$
  are in the image, which therefore consists of all even functions.
\end{proof}

\begin{Corollary} \label{C:sym4}
  The natural map $\Sym^4 L(2 \Theta) \to L(8 \Theta)^+$ is surjective.
\end{Corollary}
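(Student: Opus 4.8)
The plan is to deduce Corollary~\ref{C:sym4} from Lemma~\ref{L:Mumford_even} by factoring the map $\Sym^4 L(2\Theta) \to L(8\Theta)^+$ through $\Sym^2 L(4\Theta)^+$. The key observation is that there is a natural multiplication map $\Sym^2 L(2\Theta) \to L(4\Theta)$, and since $L(2\Theta) = L(2\Theta)^+$ consists entirely of even functions, the product of two even functions is even, so this map lands in $L(4\Theta)^+$. Taking the second symmetric power of this map gives $\Sym^2\bigl(\Sym^2 L(2\Theta)\bigr) \to \Sym^2 L(4\Theta)^+$, and there is a canonical surjection $\Sym^4 L(2\Theta) \to \Sym^2\bigl(\Sym^2 L(2\Theta)\bigr)$ (in characteristic $\neq 2$; more precisely $\Sym^4 V$ is a quotient, or at least there is a natural map whose composite with multiplication is the full quartic multiplication). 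Composing, the quartic multiplication map $\Sym^4 L(2\Theta) \to L(8\Theta)^+$ factors as
\[
  \Sym^4 L(2\Theta) \To \Sym^2 L(4\Theta)^+ \To L(8\Theta)^+ ,
\]
where the first arrow is induced by multiplication and the second is the map of Lemma~\ref{L:Mumford_even}.

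First I would check that the first arrow is surjective. This reduces to showing that the multiplication map $\Sym^2 L(2\Theta) \to L(4\Theta)^+$ is surjective, because then any product of two elements of $L(4\Theta)^+$, each written as a sum of products of pairs from $L(2\Theta)$, is a sum of degree-four products from $L(2\Theta)$, i.e.\ lies in the image of $\Sym^4 L(2\Theta)$. So the crux is the surjectivity of $\Sym^2 L(2\Theta) \to L(4\Theta)^+$. This too follows from Mumford's results (\cite[\S4, Thm.~1]{Mumford}) in exactly the same way Lemma~\ref{L:Mumford_even} was proved: the argument there with the functions $\delta_{a+b}+\delta_{-a-b}$ and $\delta_{a-b}+\delta_{-a+b}$ works verbatim with $4\Theta$ replaced by $2\Theta$ and $8\Theta$ by $4\Theta$, since nothing in that argument used the specific multiple, only that one multiplies sections of $2n\Theta$ to get sections of $4n\Theta$ and that $L(2n\Theta)$ is spanned by the relevant Fourier-type basis. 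Alternatively, one can invoke the more classical fact that for $n\ge 2$ the multiplication $\Sym^2 L(n\Theta) \to L(2n\Theta)$ is surjective (projective normality of the $|2\Theta|$-embedding), applied with $n=2$, together with the observation that it is equivariant for the negation action and hence surjects onto the even part.

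I expect the main obstacle to be a purely bookkeeping one: making sure the even/odd decomposition is tracked correctly through each factorization, in particular that the image of $\Sym^4 L(2\Theta)$ in $L(8\Theta)$ really is contained in (and equals) $L(8\Theta)^+$, and that the intermediate surjectivity statement $\Sym^2 L(2\Theta) \to L(4\Theta)^+$ is genuinely available rather than merely $\Sym^2 L(2\Theta) \to L(4\Theta)$ composed with a projection that might not be compatible. Since $L(2\Theta)=L(2\Theta)^+$, every element of $\Sym^4 L(2\Theta)$ is a sum of products of four even functions, hence even, so containment in $L(8\Theta)^+$ is automatic; and the negation action commutes with multiplication, so surjectivity of $\Sym^2 L(2\Theta)\to L(4\Theta)$ immediately gives surjectivity onto the even part because $\Sym^2 L(2\Theta)$ is already "even". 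Thus the only real content is the surjectivity of $\Sym^2 L(2\Theta) \to L(4\Theta)$, which is standard, and the corollary follows by the chain of surjections above. I would write this up in three or four lines, citing Lemma~\ref{L:Mumford_even} and the projective normality fact (or the adaptation of Mumford's argument) for the two factors.
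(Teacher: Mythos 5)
Your overall plan — factoring through $\Sym^2 L(4\Theta)^+$ and invoking Lemma~\ref{L:Mumford_even} — is the right one, and it is the strategy the paper uses. But the step you describe as "the crux," namely the surjectivity of $\Sym^2 L(2\Theta) \to L(4\Theta)^+$, is in fact \emph{false}, and the paper's proof is built around exactly this obstruction. A dimension count makes the failure unavoidable: $\dim \Sym^2 L(2\Theta) = \binom{9}{2} = 36$ and $\dim L(4\Theta)^+ = 4^3/2 + 4 = 36$, so the map could be an isomorphism at best; but it has a nontrivial kernel spanned by the quadric $y_0 = 2(\xi_1\xi_8 - \xi_2\xi_7 + \xi_3\xi_6 - \xi_4\xi_5)$ vanishing on $\CK$, so its image has dimension $35$ and the cokernel is one-dimensional. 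The paper states this explicitly both in the proof of Corollary~\ref{C:sym4} and again at the end of Section~\ref{S:dup}, where it identifies the missing generator $\Xi \in L(4\Theta)^+$.

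Your two suggested justifications for the surjectivity do not repair this. Projective normality of the $|2\Theta|$-embedding is not available: $|2\Theta|$ on the Jacobian is not an embedding at all (it is two-to-one onto the Kummer variety), and the classical surjectivity of $\Sym^2 L(n\Theta) \to L(2n\Theta)$ requires $n \ge 3$; for $n = 2$ even the target has dimension $64 > 36$, so surjectivity onto the full $L(4\Theta)$ is hopeless, and as shown above it fails onto the even part as well. Likewise, Mumford's argument from \S4 Thm.~1 does not "work verbatim" with $4\Theta$ replaced by $2\Theta$; it hinges on properties of $L(4\Theta)$ (totally symmetric, level divisible by $4$) that $L(2\Theta)$ does not share.

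What the paper actually does is confront the cokernel head-on: it produces the function $\Xi$ generating the cokernel of $V_2 \to L(4\Theta)^+$ and then verifies that $\Xi^2$ and all products $\xi_i\xi_j\Xi$ can nevertheless be expressed as quartics in $\xi_1,\ldots,\xi_8$, i.e., lie in the image of $\Sym^4 L(2\Theta)$. This is the additional, non-formal input your argument is missing: without it, one only sees that $\Sym^4 L(2\Theta)$ surjects onto the image of $\Sym^2 (\operatorname{im}(V_2))$, which is a priori a proper subspace of $\Sym^2 L(4\Theta)^+$'s image in $L(8\Theta)^+$. Once the explicit expressions for $\Xi^2$ and $\xi_i\xi_j\Xi$ are in hand, the image of $\Sym^4 L(2\Theta)$ does contain the image of $\Sym^2 L(4\Theta)^+$, and Lemma~\ref{L:Mumford_even} finishes the argument as you intended.
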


\begin{proof}
  Note that $L(2 \Theta) = L(2 \Theta)^+$, so the image of $\Sym^4 L(2 \Theta) \to L(8 \Theta)$
  is contained in the even subspace. Since there is exactly one quadratic relation,
  the map $\Sym^2 L(2 \Theta) \to L(4 \Theta)^+$ is not surjective, but has a
  one-dimensional cokernel. We will see below in Section~\ref{S:dup} that this cokernel is
  generated by the image of a function~$\Xi$ such that $\xi_i \xi_j \Xi$ (for all $i,j$)
  and $\Xi^2$ can be expressed as quartics in the~$\xi_i$. This implies that
  the image of the map in the statement contains the image of $\Sym^2 L(4 \Theta)^+$,
  and surjectivity follows from Lemma~\ref{L:Mumford_even}.
  Note that once we have found $\Xi$ explicitly, the assertions relating to it made above
  can be checked directly and without relying on the considerations leading to the
  determination of~$\Xi$.
\end{proof}

\begin{Theorem} \label{T:Kummerrel}
  Let $k$ be a field of characteristic different from~$2$ and let $F \in k[x,z]$ be
  homogeneous of degree~$8$ and squarefree. Then the image~$\CK$ in~$\BP^7$ of the Kummer variety
  associated to the Jacobian variety of the hyperelliptic curve $y^2 = F(x,1)$
  is defined by the quadric~\eqref{quadrel} and the $34$~quartics constructed above.
\end{Theorem}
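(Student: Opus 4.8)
The plan is to show that the homogeneous ideal $I(\CK) \subseteq k[\xi_1,\dots,\xi_8]$ is generated in degrees $\le 4$ by the quadric and the $34$ quartics, using the dimension count already assembled in the excerpt together with the surjectivity statements from Lemma~\ref{L:Mumford_even} and Corollary~\ref{C:sym4}. Concretely, I would argue degree by degree. In degree~$2$: the map $\Sym^2 L(2\Theta) \to L(4\Theta)^+$ has a $1$-dimensional cokernel (dimensions $36$ and $37$), so the quadratic part of $I(\CK)$ is exactly $1$-dimensional, and by~\eqref{quadrel} it is spanned by $Q := \xi_1\xi_8 - \xi_2\xi_7 + \xi_3\xi_6 - \xi_4\xi_5$. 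In degree~$4$: by Corollary~\ref{C:sym4} the map $\Sym^4 L(2\Theta) \to L(8\Theta)^+$ is surjective, so the degree-$4$ part of $I(\CK)$ has dimension $\dim \Sym^4 k^8 - \dim L(8\Theta)^+ = 330 - 260 = 70$. On the other hand, the $36$ multiples $\xi_i\xi_j Q$ together with the $34$ explicit quartics are $70$ elements of this space, and they are linearly independent by Lemma~\ref{L:quartics_indep} (the $34$ quartics are independent of the $36$ multiples of $Q$ because, as noted in the text, none of the $34$ is a multiple of $Q$, and Lemma~\ref{L:quartics_indep} asserts independence of all $70$). Hence these $70$ quartics span the degree-$4$ part of $I(\CK)$.

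It then remains to show that $Q$ and the $34$ quartics generate the full ideal $I(\CK)$, i.e. that $I(\CK)$ has no "new" generators in degrees $\ge 5$. Here I would invoke the result of M\"uller~\cite[Prop.~3.1]{MuellerG3}, quoted in the excerpt, that over an algebraically closed field $\CK$ is cut out \emph{scheme-theoretically} by quartics — that is, the quartic part of $I(\CK)$ generates an ideal whose saturation (equivalently, whose zero locus is $\CK$ and which agrees with $I(\CK)$ in all large degrees). Combined with the fact, also quoted, that for a generic genus-$3$ hyperelliptic Jacobian the quartic relations are exactly a quadric plus $34$ quartics, and with our identification of the degree-$\le 4$ parts above, this shows that $Q$ and the $34$ quartics generate an ideal $I'$ with $I' = I(\CK)$ in degrees $\le 4$ and $\sqrt{I'} = I(\CK)$. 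To upgrade $\sqrt{I'} = I(\CK)$ to $I' = I(\CK)$, I would use that $\CK$ is projectively normal in this embedding by $|2\Theta|$ (this is Mumford's theorem~\cite[\S4, Thm.~1]{Mumford}, used already in Lemma~\ref{L:Mumford_even}: the multiplication maps $\Sym^n L(2\Theta) \to L(2n\Theta)^+$ are all surjective), so $I(\CK)$ is the saturation of its degree-$\le 4$ truncation; since $I'$ and $I(\CK)$ agree through degree~$4$ and $I'$ already defines $\CK$ set-theoretically with the right Hilbert function, they coincide.

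The one point requiring care — and the main obstacle — is passing from M\"uller's statements, which are made over an algebraically closed field and after moving a Weierstrass point to infinity, to our field $k$ of characteristic $\ne 2$ and an arbitrary squarefree octic $F$. The excerpt already flags the device: base-change to $\bar{k}$ and apply a $\GL(2)$-transformation in $(x,z)$ to reduce to the degree-$7$ Weierstrass model of~\cite{MuellerG3}; one must check that the $34$ quartics we wrote down transform into (a basis of the new space of) M\"uller's quartics under the coordinate change on $\BP^7$ induced by this substitution — this is where Section~\ref{S:trans} on the $\GL(2)$-action on the $\xi_i$ is used. Since linear independence of the $70$ quartics holds over $\Z[f_0,\dots,f_8]$ by Lemma~\ref{L:quartics_indep} (the determinant being $\pm 1$, hence a unit in \emph{every} characteristic $\ne 2$ — indeed in every characteristic), the dimension counts are valid over any such $k$, and the only genuinely geometric input — that quartics suffice and that there are no further relations — descends from $\bar{k}$ because "the ideal is generated in degree $\le 4$" and "the Hilbert function equals $n \mapsto \dim L(2n\Theta)^+ = 4n^3+4$ for $n \ge 1$" are properties insensitive to base field extension. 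I would therefore conclude: over $\bar{k}$, $I(\CK_{\bar k})$ is generated by $Q$ and the $34$ quartics (by the degree-$\le 4$ identification plus M\"uller's scheme-theoretic result), and since $Q$ and the $34$ quartics are defined over $k$ and their zero scheme in $\BP^7_k$ base-changes to $\CK_{\bar k}$, that zero scheme is a $k$-form of $\CK_{\bar k}$ whose $\bar k$-points are $\CK$; hence it equals $\CK$.
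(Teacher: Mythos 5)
Your proposal correctly identifies the same three ingredients the paper uses — Corollary~\ref{C:sym4}, Lemma~\ref{L:quartics_indep}, and M\"uller's Proposition~3.1 — and the first paragraph (through the conclusion that the quadric together with the $34$~quartics accounts for the full $70$-dimensional space of quartics vanishing on~$\CK$) is essentially the paper's argument. At that point the paper simply concludes: by~\cite[Prop.~3.1]{MuellerG3}, $\CK$ can be defined by quartics, and since the ideal generated by the quadric and the $34$~quartics contains the entire degree-$4$ piece $I(\CK)_4$, its zero locus is contained in (hence equal to) $\CK$. That is the whole proof; the phrase ``defined by'' is being read as ``is the common zero locus of''.

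Where you part ways with the paper, and where gaps appear, is in the attempt to prove the stronger statement that $I(\CK)$ is \emph{generated as an ideal} in degrees $\le 4$. That is not needed for the theorem and the argument you sketch for it does not work as stated. Projective normality (surjectivity of $\Sym^n L(2\Theta) \to L(2n\Theta)^+$) controls the Hilbert function and guarantees that $I(\CK)$ is saturated, but it does not imply that $I(\CK)$ coincides with the saturation of its degree-$\le 4$ truncation, nor that the ideal is generated in low degrees; those are separate statements. Likewise, ``$\sqrt{I'} = I(\CK)$ and $I'$ agrees with $I(\CK)$ in degrees $\le 4$'' does not force $I' = I(\CK)$: an ideal can define the right reduced scheme and agree with the full ideal through a fixed degree without being saturated. (If M\"uller's Proposition~3.1 is a genuinely scheme-theoretic statement — that $\CK$ is the \emph{scheme} $\operatorname{Proj} k[\vxi]/(I(\CK)_4)$ — then one would get saturation of $I'$ equal to $I(\CK)$, but you would still need a separate argument that $I'$ itself is saturated.) Your concerns about base change from $\bar k$ and the $\GL(2)$ normalization are addressed by the paper already, in the discussion preceding Lemma~\ref{L:quartics_indep}, and do not need to be re-litigated in the theorem's proof. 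Finally, a small slip: $\dim L(4\Theta)^+ = 4^3/2 + 4 = 36$, not $37$; both $\Sym^2 L(2\Theta)$ and $L(4\Theta)^+$ have dimension $36$, and the one-dimensional cokernel is matched by a one-dimensional kernel (the quadric), consistent with your conclusion.
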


\begin{proof}
  By Corollary~\ref{C:sym4} the dimension of the space of quartics vanishing on~$\CK$
  is~$70$. By Lemma~\ref{L:quartics_indep} the quadric and the $34$~quartics give rise
  to $70$~independent quartics vanishing on~$\CK$. By~\cite[Prop.~3.1]{MuellerG3}
  $\CK$ can be defined by quartics, so the claim follows.
\end{proof}

This improves on~\cite[Thm.~3.3]{MuellerG3} by removing the genericity assumption
(and allowing characteristic $3$ or~$5$).

To conclude this section, we determine the images of some special points on~$\CJ$
under the map to~$\CK$.

The discussion on page~\pageref{asymp} shows that
on a point $[(x_1,y_1)+(x_2,y_2)+\fm] \in \Theta$, the map restricts to
\begin{align*} \Bigl(0 &: 1 : -(x_1+x_2) : x_1 x_2 : x_1^2 + x_1 x_2 + x_2^2 \\
                       &: -(x_1+x_2) x_1 x_2 : (x_1 x_2)^2
                        : \frac{2 y_1 y_2 - G(x_1,x_2)}{(x_1-x_2)^2}\Bigr) .
\end{align*}
If we write $(X - x_1)(X - x_2) = \sigma_0 X^2 + \sigma_1 X + \sigma_2$, then
this can be written as
\[ (0 : \sigma_0^2 : \sigma_0 \sigma_1 : \sigma_0 \sigma_2
      : \sigma_1^2 - \sigma_0 \sigma_2 : \sigma_1 \sigma_2 : \sigma_2^2
      : \xi_8) ,
\]
where, rewriting $\bigl((x_1 - x_2)^2 \xi_8 - G(x_1, x_2)\bigr)^2 = 4 F(x_1, 1) F(x_2, 1)$,
we have that
\begin{align*}
 (\sigma_1^2 - &4 \sigma_0 \sigma_2) \xi_8^2 \\
   &{}
      + (4 f_0 \sigma_0^4 - 2 f_1 \sigma_0^3 \sigma_1 + 4 f_2 \sigma_0^3 \sigma_2
      - 2 f_3 \sigma_0^2 \sigma_1 \sigma_2 + 4 f_4 \sigma_0^2 \sigma_2^2 \\
   &\hphantom{{} + (}
      - 2 f_5 \sigma_0 \sigma_1 \sigma_2^2 + 4 f_6 \sigma_0 \sigma_2^3
      - 2 f_7 \sigma_1 \sigma_2^3 + 4 f_8 \sigma_2^4) \xi_8 \\
   &{}
      + (-4 f_0 f_2 + f_1^2) \sigma_0^6 + 4 f_0 f_3 \sigma_0^5 \sigma_1
      - 2 f_1 f_3 \sigma_0^5 \sigma_2 - 4 f_0 f_4 \sigma_0^4 \sigma_1^2 \\
   &\hphantom{{} + (}
      + (-4 f_0 f_5 + 4 f_1 f_4) \sigma_0^4 \sigma_1 \sigma_2
      + (-4 f_0 f_6 + 2 f_1 f_5 - 4 f_2 f_4 + f_3^2) \sigma_0^4 \sigma_2^2 \\
   &\hphantom{{} + (}
      + 4 f_0 f_5 \sigma_0^3 \sigma_1^3
      + (8 f_0 f_6 - 4 f_1 f_5) \sigma_0^3 \sigma_1^2 \sigma_2
      + (8 f_0 f_7 - 4 f_1 f_6 + 4 f_2 f_5) \sigma_0^3 \sigma_1 \sigma_2^2 \\
   &\hphantom{{} + (}
      + (-2 f_1 f_7 - 2 f_3 f_5) \sigma_0^3 \sigma_2^3
      - 4 f_0 f_6 \sigma_0^2 \sigma_1^4
      + (-12 f_0 f_7 + 4 f_1 f_6) \sigma_0^2 \sigma_1^3 \sigma_2 \\
   &\hphantom{{} + (}
      + (-16 f_0 f_8 + 8 f_1 f_7 - 4 f_2 f_6) \sigma_0^2 \sigma_1^2 \sigma_2^2
      + (8 f_1 f_8 - 4 f_2 f_7 + 4 f_3 f_6) \sigma_0^2 \sigma_1 \sigma_2^3 \\
   &\hphantom{{} + (}
      + (-4 f_2 f_8 + 2 f_3 f_7 - 4 f_4 f_6 + f_5^2) \sigma_0^2 \sigma_2^4
      + 4 f_0 f_7 \sigma_0 \sigma_1^5 \\
   &\hphantom{{} + (}
      + (16 f_0 f_8 - 4 f_1 f_7) \sigma_0 \sigma_1^4 \sigma_2
      + (-12 f_1 f_8 + 4 f_2 f_7) \sigma_0 \sigma_1^3 \sigma_2^2 \\
   &\hphantom{{} + (}
      + (8 f_2 f_8 - 4 f_3 f_7) \sigma_0 \sigma_1^2 \sigma_2^3
      + (-4 f_3 f_8 + 4 f_4 f_7) \sigma_0 \sigma_1 \sigma_2^4
      - 2 f_5 f_7 \sigma_0 \sigma_2^5 \\
   &\hphantom{{} + (}
      - 4 f_0 f_8 \sigma_1^6
      + 4 f_1 f_8 \sigma_1^5 \sigma_2 - 4 f_2 f_8 \sigma_1^4 \sigma_2^2
      + 4 f_3 f_8 \sigma_1^3 \sigma_2^3 - 4 f_4 f_8 \sigma_1^2 \sigma_2^4 \\
   &\hphantom{{} + (}
      + 4 f_5 f_8 \sigma_1 \sigma_2^5 + (-4 f_6 f_8 + f_7^2) \sigma_2^6 \\
   &= 0 .
\end{align*}
(This is similar to the quartic defining the Kummer surface in the genus~$2$ case.)
The image on~$\CK$ of the theta divisor is a surface of degree~$12$ in
$\BP^6 = \BP^7 \cap \{\xi_1 = 0\}$; the intersection of~$\CK$ with the
hyperplane $\xi_1 = 0$ is twice the image of~$\Theta$. (The equation above
is cubic in the middle six coordinates and~$\xi_8$, so we get three times
the degree of the Veronese surface. It is known that $\CK$ has degree~$24$.)

When $(x_2,y_2)$ approaches $(x_1,-y_1)$, then the last coordinate tends to infinity,
whereas the remaining ones stay bounded, so the origin on~$\CJ$ is mapped to
\[ o := (0 : 0 : 0 : 0 : 0 : 0 : 0 : 1) . \]

Points in~$\CJ[2]$ are represented by factorizations $F = G H$ with $d = \deg G$
even, compare Section~\ref{S:2tors} below. Writing
\[ G = g_d x^d + g_{d-1} x^{d-1} z + \ldots + g_0 z^d \quad\text{and}\quad
   H = h_{8-d} x^{8-d} + h_{7-d} x^{7-d} z + \ldots + h_0 z^{8-d} ,
\]
we see that a 2-torsion point represented by $(G, H)$ with $\deg G = 2$ maps to
\begin{equation} \label{E:tors1}
  (0 : g_2^2 : g_1 g_2 : g_0 g_2
      : g_1^2 - g_0 g_2 : g_0 g_1 : g_0^2
      : g_0^3 h_6 + g_0^2 g_2 h_4 + g_0 g_2^2 h_2 + g_2^3 h_0) .
\end{equation}
A 2-torsion point represented by $(G, H)$ with $\deg G = 4$ maps to
\begin{align}
  \bigl(1 &: g_2 h_4 + g_4 h_2 : g_1 h_4 + g_4 h_1 : g_0 h_4 + g_4 h_0 \label{E:tors2} \\
          &: g_0 h_4 + g_4 h_0 + g_1 h_3 + g_3 h_1 \nonumber
           : g_0 h_3 + g_3 h_0 : g_0 h_2 + g_2 h_0 \\
          &: (g_0 h_4 + g_4 h_0)^2
              + (g_0 h_2 + g_2 h_0)(g_2 h_4 + g_4 h_2)
              + (g_1 h_0 - g_0 h_1)(g_4 h_3 - g_3 h_4)
  \bigr) ; \nonumber
\end{align}
this is obtained by taking $(A,B,C) = (G,0,H)$ in our original
parameterization.


\section{Transformations} \label{S:trans}

We compare our coordinates for the Kummer variety with those
of Stubbs~\cite{Stubbs}, Duquesne~\cite{Duquesne} and M\"uller~\cite{MuellerThesis}
in the special case $f_8 = 0$. In this case there is a rational Weierstrass
point at infinity, and we can fix the representation of a point outside of~$\Theta$
by requiring that $A$ vanishes at infinity and that $\deg B(x,1) < \deg A(x,1)$.
For a generic point~$P$ on~$\CJ$, $\deg A(x,1) = 3$; let $(x_j,y_j)$ for $j = 1,2,3$
be the three points in the effective divisor~$D$ such that $P = [D - 3 \cdot \infty]$.
Generically, the three points are distinct. Then
\[ A(x,1) = (x - x_1) (x - x_2) (x - x_3) \]
and $B(x,1)$ is the interpolation polynomial such that $B(x_j, 1) = y_j$ for $j = 1,2,3$.
We obtain the~$c_j$ from $C = (B^2 - F)/A$ by polynomial division.
This leads to\star
\[ \begin{array}{r@{{}={}}rc@{}rcrcrcrcrcrcr}
     \xi_1 & \kappa_1 \\
     \xi_2 &  & & -f_7 \kappa_2 \\
     \xi_3 &  & &  & & f_7 \kappa_3 \\
     \xi_4 &  & &  & &  & & -f_7 \kappa_4 \\
     \xi_5 & f_4 \kappa_1 &+& f_5 \kappa_2 &+& 2 f_6 \kappa_3 &+& 3 f_7 \kappa_4
            &-& \kappa_5 \\
     \xi_6 & f_3 \kappa_1 &+& f_4 \kappa_2 &+& f_5 \kappa_3 & &
            & &  &-& \kappa_6 \\
     \xi_7 & f_2 \kappa_1 & &  &-& f_4 \kappa_3 &-& 3 f_5 \kappa_4
            & &  & &  &-& \kappa_7 \\
     \xi_8 &  &-& f_2 f_7 \kappa_2 &-& f_3 f_7 \kappa_3 &-& f_4 f_7 \kappa_4
            & &  & &  & &  &+& f_7 \kappa_8
   \end{array}
\]
where $\kappa_1, \kappa_2, \ldots, \kappa_8$ are the coordinates used
by the other authors.

We consider the effect of a transformation of the curve equation.
First suppose that $\tilde{F}(x,z) = F(x + \lambda z, z)$ (corresponding
to a shift of the $x$-coordinate in the affine equation). A point represented
by a triple $(A(x,z), B(x,z), C(x,z))$ of polynomials will correspond to
the point $(\tilde{A}(x,z), \tilde{B}(x,z), \tilde{C}(x,z))$ with
$\tilde{A}(x,z) = A(x + \lambda z, z)$ and analogously for $\tilde{B}$
and~$\tilde{C}$. We obtain\star
\begin{align*}
  \tilde{\xi}_1
   &= \xi_1 \\
  \tilde{\xi}_2
    &= \xi_2 + 3 \lambda f_7 \xi_1 + 12 \lambda^2 f_8 \xi_1 \\
  \tilde{\xi}_3
    &= \xi_3 + 2 \lambda \xi_2 + 3 \lambda^2 f_7 \xi_1 + 8 \lambda^3 f_8 \xi_1 \\
  \tilde{\xi}_4
    &= \xi_4 + \lambda \xi_3 + \lambda^2 \xi_2
        + \lambda^3 f_7 \xi_1 + 2 \lambda^4 f_8 \xi_1 \\
  \tilde{\xi}_5
    &= \xi_5
        + \lambda (2 f_5 \xi_1 + 3 \xi_3)
        + \lambda^2 (6 f_6 \xi_1 + 3 \xi_2)
        + 17 \lambda^3 f_7 \xi_1
        + 34 \lambda^4 f_8 \xi_1 \\
  \tilde{\xi}_6
    &= \xi_6
        + \lambda (3 \xi_4 + \xi_5)
        + \lambda^2 (f_5 \xi_1 + 3 \xi_3)
        + \lambda^3 (2 f_6 \xi_1 + 2 \xi_2)
        + 5 \lambda^4 f_7 \xi_1
        + 8 \lambda^5 f_8 \xi_1 \\
  \tilde{\xi}_7
   &= \xi_7
       + \lambda (f_3 \xi_1 + 2 \xi_6)
       + \lambda^2 (2 f_4 \xi_1 + 3 \xi_4 + \xi_5)
       + \lambda^3 (4 f_5 \xi_1 + 2 \xi_3) \\
   &\qquad{}
       + \lambda^4 (6 f_6 \xi_1 + \xi_2)
       + 9 \lambda^5 f_7 \xi_1
       + 12 \lambda^6 f_8 \xi_1 \\
  \tilde{\xi}_8
    &= \xi_8 + \lambda
       (f_3 \xi_2 + 2 f_5 \xi_4 + 3 f_7 \xi_7) \\
    &\qquad{} + \lambda^2
       (3 f_3 f_7 \xi_1 + 2 f_4 \xi_2  + f_5 \xi_3 + 6 f_6 \xi_4 + 3 f_7 \xi_6
         + 12 f_8 \xi_7) \\
    &\qquad{} + \lambda^3
       ((12 f_3 f_8 + 6 f_4 f_7) \xi_1 + 4 f_5 \xi_2 + 4 f_6 \xi_3 + 17 f_7 \xi_4
         + f_7 \xi_5 + 16 f_8 \xi_6) \\
    &\qquad{} + \lambda^4
       ((24 f_4 f_8 + 11 f_5 f_7) \xi_1 + 8 f_6 \xi_2 + 12 f_7 \xi_3 + 46 f_8 \xi_4
         + 6 f_8 \xi_5) \\
    &\qquad{} + \lambda^5
       ((44 f_5 f_8 + 18 f_6 f_7) \xi_1 + 16 f_7 \xi_2 + 32 f_8 \xi_3) \\
    &\qquad{} + \lambda^6
       ((68 f_6 f_8 + 29 f_7^2) \xi_1 + 32 f_8 \xi_2)
        + 148 \lambda^7 f_7 f_8 \xi_1 + 148 \lambda^8 f_8^2 \xi_1.
\end{align*}

For the transformation given by $\tilde{F}(x,z) = F(z,x)$, we have
\[ \tilde{a}_j = a_{4-j}, \qquad \tilde{b}_j = b_{4-j}, \qquad \tilde{c}_j = c_{4-j} \]
and therefore
\[ (\tilde{\xi}_1, \tilde{\xi}_2, \tilde{\xi}_3, \tilde{\xi}_4,
    \tilde{\xi}_5, \tilde{\xi}_6, \tilde{\xi}_7, \tilde{\xi}_8)
    = (\xi_1, \xi_7, \xi_6, \xi_4, \xi_5, \xi_3, \xi_2, \xi_8) .
\]

More generally, consider an element
\[ \sigma = \begin{pmatrix} r & s \\ t & u \end{pmatrix} \in \GL(2) \]
acting by $(x,z) \mapsto (rx+sz, tx+uz)$. Let $\Sigma \in \GL(5)$ be the
matrix whose columns are the coefficients of $(rx+sz)^j (tx+uz)^{4-j}$,
for $j = 0,1,2,3,4$ (this is the matrix giving the action of~$\sigma$
on the fourth symmetric power of the standard representation of~$\GL(2)$).
Recall the matrix~$L$ from~\eqref{eqn:L} whose rows contain the coefficients
of $A$, $B$ and~$C$.
Then the effect on our variables $a_i$, $b_i$, $c_i$ is given by
$L \mapsto L \Sigma^\top$. With $D$ as in~\eqref{eqn:D},
we have $L^\top D L = M$ with $M$ as in~\eqref{eqn:Matrix}. So the effect of~$\sigma$ on~$M$
is given by $M \mapsto \Sigma M \Sigma^\top$. Note that $\tilde{\xi}_1 = \xi_1$
and that we can extract $\tilde{\xi}_2, \ldots, \tilde{\xi}_7$ from~$M$;
to get $\tilde{\xi}_8$ when $\xi_1$ is not invertible, we can perform a generic
computation and then specialize.

This allows us to reduce our more general setting to the situation when
there is a Weierstrass point at infinity: we adjoin a root of~$F(x,1)$,
then we shift this root to zero and invert. This leads to an equation
with $f_8 = 0$. This was used to obtain the matrix representing the
action of an even $2$-torsion point, see below in Section~\ref{S:2tors}.


\section{Lifting points to the Jacobian} \label{S:lift}

Let $P \in \CK(k)$ be a $k$-rational point on the Kummer variety.
We want to decide if $P = \kappa(P')$ for a $k$-rational point~$P'$ on the Jacobian~$\CJ$.
Consider an odd function~$h$ on~$\CJ$ (i.e., such that $h(-Q) = -h(Q)$ for $Q \in \CJ$)
such that $h$ is defined over~$k$; then $h(P') \in k$ (or $h$ as a pole at~$P'$).
Since $h^2$ is an even function, it descends to a function on~$\CK$, and we must have that
$h^2(P) = h^2(P') = h(P')^2$ is a square in~$k$. Conversely, any non-zero
odd function~$h$ on~$\CJ$ will generically separate the two points in the
fiber of the double cover $\CJ \to \CK$, so if $h^2(P)$ is a non-zero square in~$k$,
then this implies that $P$ lifts to a $k$-rational point on~$\CJ$.

So we will now exhibit some odd functions that we can use to decide if a point lifts.
Since $L(2\Theta)$ consists of even functions only, we look at
$L(3\Theta)$, which has dimension $3^3 = 27$.
Its subspace of even functions has dimension~$14$ and is spanned by
$\xi_1, \ldots, \xi_8$, the five quadratics
\[ \xi_2 (\xi_4 + \xi_5) - \xi_3^2, \quad
   \xi_2 \xi_6 - \xi_3 \xi_4, \quad
   \xi_2 \xi_6 - \xi_4^2, \quad
   \xi_3 \xi_6 - \xi_4 \xi_7, \quad
   (\xi_4 + \xi_5) \xi_7 - \xi_6^2
\]
and a further function, which can be taken to be\star
\begin{align*}
  2(2 f_0 \xi_2^2 &- f_1 \xi_2 \xi_3 + 2 f_2 \xi_2 \xi_4 - f_3 \xi_2 \xi_6
    + 2 f_4 \xi_2 \xi_7 - f_5 \xi_3 \xi_7 + 2 f_6 \xi_4 \xi_7 - f_7 \xi_6 \xi_7
    + 2 f_8 \xi_7^2) \\
    &{}- 7 \xi_2 \xi_4 \xi_7 + \xi_2 \xi_5 \xi_7 + \xi_2 \xi_6^2 + \xi_3^2 \xi_7
    + 4 \xi_3 \xi_4 \xi_6 - 2 \xi_3 \xi_5 \xi_6 + \xi_4^3 - 5 \xi_4^2 \xi_5
    + 2 \xi_4 \xi_5^2 .
\end{align*}
The subspace of odd functions has
dimension~$13$. We obtain a ten-dimensional subspace of this space by
considering the coefficients of $A_l \wedge B_l \wedge C_l$, which is
an expression of degree~$3$, of odd degree in~$B$ and invariant even under
$\SL(3)$ acting on $(A, B, C)$. (One can check that there are no further
$\Gamma$-invariants of degree~$3$.) These coefficients are
given by the $3 \times 3$-minors of the matrix~$L$ of~\eqref{eqn:L}.
If we denote the minor corresponding to $0 \le i < j < k \le 4$ by
$\mu_{ijk}$, then we find that
\begin{equation} \label{E:muijk}
  \mu_{ijk}^2 = \eta_{ii} \eta_{jk}^2 + \eta_{jj} \eta_{ik}^2 + \eta_{kk} \eta_{ij}^2
                  - 4 \eta_{ii} \eta_{jj} \eta_{kk} - \eta_{ij} \eta_{ik} \eta_{jk} .
\end{equation}
If $L_{ijk}$ is the corresponding $3 \times 3$ submatrix of~$L$, then we have that
\[ \mu_{ijk}^2 = \det(L_{ijk})^2 = -\tfrac{1}{2} \det (L_{ijk}^\top D L_{ijk}) \]
with~$D$ as in~\eqref{eqn:D}.
We also have that $L^\top D L = M$, where
$M$ is the matrix corresponding to the quadratic form $B_l^2 - A_l C_l$
given in~\eqref{eqn:Matrix}. We can express this by saying
that $\mu_{ijk}^2$ is $-\frac{1}{2}$ times the corresponding principal minor of~$M$.
In the same way, one sees that $\mu_{ijk} \mu_{i'j'k'}$ is $-\frac{1}{2}$ times
the minor of~$M$ given by selecting rows $i,j,k$ and columns $i',j',k'$.
This shows that if one $\mu_{ijk}^2(P)$ is a non-zero square in~$k$, then
all $\mu_{i'j'k'}^2(P)$ are squares in~$k$. All ten of them
vanish simultaneously if and only if $A$, $B$ and~$C$ are linearly dependent
(this is equivalent to the rank of $B_l^2 - A_l C_l$ being at most~$2$).
The dimension of the space spanned by $A$, $B$ and~$C$ cannot be strictly less than~$2$,
since this would imply that $F$ is a constant
times a square, which contradicts the assumption that $F$ is squarefree.
So we can write $A$, $B$ and~$C$ as linear combinations of two~polynomials
$A'$ and~$C'$, and after a suitable change of basis, we find that
$F = B^2 - A C = A' C'$. This means that the point is the image of
a $2$-torsion point on~$\CJ$, and it will always lift.

So for a point~$P$ in~$\CK(k)$ with $\xi_1 = 1$ (hence outside the theta divisor)
to lift to a point in~$\CJ(k)$, it is necessary that all these expressions,
when evaluated at~$P$, are squares in~$k$, and sufficient that one of them gives
a non-zero square.
For points with $\xi_1 = 0$, we can use the explicit description of the
image of~$\Theta$ given in Section~\ref{S:coords}.

Let $\CV'$ be the quotient of~$\CV$ by the action of the subgroup of~$\Gamma$
generated by the elements of the form $t_\lambda$ and~$n_\mu$; then the points
of~$\CV'$ correspond to effective divisors of degree~$4$ on~$\CC$ in general position.
Geometrically, the induced map $\CV' \to \CX \setminus \Theta$ is a conic bundle:
for a point on~$\CX$ outside the theta divisor, all effective divisors
representing it are in general position, and the corresponding linear
system has dimension~$1$ by the Riemann-Roch Theorem, so the fibers are
Severi-Brauer varieties of dimension~$1$. If $\CC$ has a $k$-rational point~$P$,
then the bundle has a section (and so is in fact a $\BP^1$-bundle), since
we can select the unique representative containing~$P$ in its support.
If $k$ is a number field and $\CC$ has points over every completion of~$k$,
then all the conics in fibers above $k$-rational points on~$\CX \setminus \Theta$
have points over all completions of~$k$ and therefore are isomorphic to~$\BP^1$
over~$k$. We can check whether a $k$-defined divisor representing a lift
of~$P$ to a $k$-rational point on~$\CJ$ exists and find one in this case
in the following way. We assume that $P$ is not in the image of~$\Theta$
and is not the image of a $2$-torsion point.
We are looking for a matrix $\tilde{L} \in \BA^{15}(k)$ representing
a lift $P' \in \CJ(k)$ of~$P$. Since we exclude $2$-torsion,
the matrix~$\tilde{L}$ must have rank~$3$, and there is a minor~$\mu_{ijk}$
such that $\mu_{ijk}^2(P) = \mu_{ijk}(P')^2$ is a non-zero square in~$k$.
The rank of~$M(P) = \tilde{L}^\top D \tilde{L}$ is also~$3$, so
both $L(\tilde{P})$ and~$M(P)$ have the same $2$-dimensional kernel.
We can compute the kernel from~$M(P)$ and then we find the space generated
by the rows of~$\tilde{L}$ as its annihilator, which is simply
given by rows $i,j,k$ of~$M(P)$. If we find an invertible $3 \times 3$ matrix~$U$
with entries in~$k$
such that $M_{ijk}(P) = U^\top D U$ (where $M_{ijk}$ is the principal $3 \times 3$
submatrix of~$M$ given by rows and columns $i,j,k$), then we can find a
suitable matrix~$\tilde{L}$
whose rows are in the space generated by rows $i,j,k$ of~$M(P)$ and such that
$\tilde{L}_{ijk} = U$. Then $\tilde{L}^\top D \tilde{L} = M(P)$, so $\tilde{L}$
gives us the desired representative. Finding~$U$ is equivalent
to finding an isomorphism between the quadratic forms given by
\[ (x_1, x_2, x_3) M_{ijk}(P) (x_1, x_2, x_3)^\top  \qquad\text{and}\qquad
   2 x_1 x_3 - 2 x_2^2 ,
\]
for whose existence a necessary condition is that $\det M_{ijk}(P) = -2 \mu_{ijk}^2(P)$
is a square times $\det D = -2$. Given this, the problem
comes down to finding a point on the conic given by the first form
(which is the conic making up the fiber above~$P'$ or~$-P'$) and then parameterizing
the conic using lines through the point.

\begin{Remark}
  One can check\star\ that the following three expressions are a possible choice
  for the missing three basis elements of the odd subspace of~$L(3\Theta)$:
  \begin{align*}
     &\xi_2 \mu_{012} - \xi_3 \mu_{013} + \xi_5 \mu_{014} \\
     &\xi_3 \mu_{014} - (\xi_4 + \xi_5) \mu_{024} + \xi_4 \mu_{123} + \xi_6 \mu_{034} \\
     &\xi_5 \mu_{034} - \xi_6 \mu_{134} + \xi_7 \mu_{234}
  \end{align*}
\end{Remark}


\section{The action of the $2$-torsion subgroup on~$\CK$} \label{S:2tors}

We follow the approach taken in~\cite{StollH1} and consider the action of
the $2$-torsion subgroup of~$\CJ$ on~$\CK$ and the ambient projective space.
Note that translation by a $2$-torsion point commutes with negation on~$\CJ$,
so the translation descends to an automorphism of~$\CK$, and since $2\Theta$
is linearly equivalent to its translate, this automorphism actually is induced
by an automorphism of the ambient~$\BP^7$.

We will see that this projective representation of~$\CJ[2] \simeq (\Z/2\Z)^6$
can be lifted to a representation of a central extension of~$\CJ[2]$ by~$\mu_2$
on the space of linear forms in the coordinates $\xi_1, \ldots, \xi_8$.
This representation is irreducible. In the next section, we consider this
representation and the induced representations on the spaces of quadratic
and quartic forms in $\xi_1, \ldots, \xi_8$, whereas in this section, we
obtain an explicit description of the action of~$\CJ[2]$ on~$\BP^7$.

There is a natural bijection between the $2$-torsion subgroup~$\CJ[2]$
of the Jacobian and the set of unordered partitions of the set
$\Omega \subset \BP^1$ of zeros of~$F$ into two subsets of even cardinality.
The torsion point~$T$ corresponding to a partition $\{\Omega_1, \Omega_2\}$ is
\[ \left[\sum_{\omega \in \Omega_1} (\omega,0)\right] - \frac{\#\Omega_1}{2} \fm
      = \left[\sum_{\omega \in \Omega_2} (\omega,0)\right] - \frac{\#\Omega_2}{2} \fm .
\]
Since $\#\Omega = 8$ is divisible by~$4$, the quantity
$\eps(T) = (-1)^{\#\Omega_1/2} = (-1)^{\#\Omega_2/2}$ is well-defined.
We say that $T$ is \emph{even} if $\eps(T) = 1$ and \emph{odd} if $\eps(T) = -1$.
By definition, the even $2$-torsion points are the 35~points
corresponding to a partition into two sets of four roots, together with the origin,
and the odd
$2$-torsion points are the 28~points corresponding to a partition into
subsets of sizes $2$ and~$6$. The Weil pairing of two torsion points
$T$ and~$T'$ represented by $\{\Omega_1, \Omega_2\}$ and~$\{\Omega'_1, \Omega'_2\}$,
respectively, is given by
\[ e_2(T, T') = (-1)^{\#(\Omega_1 \cap \Omega'_1)} . \]
It is then easy to check that
\begin{equation} \label{eqn:Weil}
  e_2(T, T') = \eps(T) \eps(T') \eps(T+T') .
\end{equation}
Note that $\Pic^0_\CC$ is canonically isomorphic to~$\Pic^2_\CC$ (by adding the
class of~$\fm$), which contains the theta characteristics.
(A divisor class $\fD \in \Pic^2_\CC$ is
a \emph{theta characteristic} if $2\fD = \fW$.) In this way,
the theta characteristics are identified with the $2$-torsion points,
and the odd (resp., even) theta characteristics correspond to the
odd (resp., even) $2$-torsion points.

Using the transformations described in Section~\ref{S:trans} and the matrices obtained by
Duquesne~\cite{Duquesne} representing the translation by a $2$-torsion point,
we find the corresponding matrices in our setting for an even nontrivial
$2$-torsion point. The matrices corresponding to odd $2$-torsion points can then also
be derived. For each factorization $F = G H$ into two forms of even degree,
there is a matrix $M_{(G,H)}$ whose entries are polynomials with integral coefficients
in the coefficients of $G$ and~$H$ and whose image in $\PGL(8)$ gives the action
of the corresponding $2$-torsion point.
These entries are too large to be reproduced here, but
are given in the file \texttt{Kum3-torsionmats.magma} at~\cite{Files}.

The matrices satisfy the relations\star
\begin{equation} \label{eqn:MT}
  M_{(G,H)}^2 = \Res(G, H) I_8 \qquad\text{and}\qquad \det M_{(G,H)} = \Res(G, H)^4 ,
\end{equation}
where $\Res$ denotes the resultant of two binary forms. Let
\[ S = \left(
       \begin{array}{@{\,}r@{\;}r@{\;\;}r@{\;}r@{\;}r@{\;\;}r@{\;}r@{\;\;}r@{\,}}
         0 & 0 & 0 & 0 &  0 & 0 & 0 & 1 \\
         0 & 0 & 0 & 0 &  0 & 0 &-1 & 0 \\
         0 & 0 & 0 & 0 &  0 & 1 & 0 & 0 \\
         0 & 0 & 0 & 0 & -1 & 0 & 0 & 0 \\
         0 & 0 & 0 &-1 &  0 & 0 & 0 & 0 \\
         0 & 0 & 1 & 0 &  0 & 0 & 0 & 0 \\
         0 &-1 & 0 & 0 &  0 & 0 & 0 & 0 \\
         1 & 0 & 0 & 0 &  0 & 0 & 0 & 0
       \end{array}\right)
\]
be the matrix corresponding to the quadratic
relation~\eqref{quadrel} satisfied by points on the Kummer variety.

\begin{Definition}
  We will write $\langle \cdot, \cdot \rangle_S$ for the pairing given by~$S$.
  Concretely, for vectors $\vxi = (\xi_1, \ldots, \xi_8)$ and
  $\vz = (\zeta_1, \ldots, \zeta_8)$, we have
  \[ \langle \vxi, \vz \rangle_S
      = \xi_1 \zeta_8 - \xi_2 \zeta_7 + \xi_3 \zeta_6 - \xi_4 \zeta_5
        - \xi_5 \zeta_4 + \xi_6 \zeta_3 - \xi_7 \zeta_2 + \xi_8 \zeta_1 .
  \]
\end{Definition}

One checks\star\ that for all $G, H$ as above,
\[ (S M_{(G,H)})^\top = (-1)^{(\deg G)/2} S M_{(G,H)} . \]
If $T \neq 0$ is even, then all corresponding matrices $M_{(G,H)}$ are equal;
we denote this matrix by~$M_T$. In this case, also the resultant $\Res(G, H)$
depends only on~$T$; we write it~$r(T)$, so that we have $M_T^2 = r(T) I_8$.
For $T$ odd and represented by $(G, H)$ with $\deg G = 2$, we have
$M_{(\lambda G, \lambda^{-1} H)} = \lambda^2 M_{(G,H)}$.
As a special case, we have $M_{(1, F)} = I_8$.
For $T \neq 0$ even, the entry in the upper right corner of~$M_T$ is~$1$, for
all other $2$-torsion points, this entry is zero.

For a $2$-torsion point $T \in \CJ[2]$, if we denote by $M_T$ the matrix corresponding
to one of the factorizations defining~$T$, we therefore have
(using that $S = S^\top = S^{-1}$)
\[ (S M_T)^\top = \eps(T) S M_T, \qquad\text{or equivalently,}\quad
    M_T = \eps(T) S M_T^\top S .
\]
This implies (using that $M_{T'} M_T$ is, up to scaling, a matrix corresponding
to $T+T'$)
\begin{align*}
  M_T M_{T'} &= \eps(T) S M_T^\top S \cdot \eps(T') S M_{T'}^\top S \\
             &= \eps(T) \eps(T') S (M_{T'} M_T)^\top S
              = \eps(T) \eps(T') \eps(T+T') M_{T'} M_T .
\end{align*}
Using~\eqref{eqn:Weil}, we recover the well-known fact that
\begin{equation} \label{eqn:e2pairing}
  M_T M_{T'} = e_2(T, T') M_{T'} M_T .
\end{equation}

Since $M_T^2$ is a scalar matrix, the relation given above implies that
the quadratic relation is invariant (up to scaling) under the action
of~$\CJ[2]$ on~$\BP^7$:
\[ M_T^\top S M_T = \Res(G,H) S . \]


\section{The action on linear, quadratic and quartic forms} \label{S:action}

We work over an algebraically closed field~$k$ of characteristic different
from~$2$. The first result describes a representation of a central extension~$G$
of~$\CJ[2]$ on the space of linear forms that lifts the action on~$\BP^7$.

\begin{Lemma} \label{L:J2lift}
  There is a subgroup~$G$ of~$\SL(8)$ and an exact sequence
  \[ 0 \To \mu_2 \To G \To \CJ[2] \To 0 \]
  induced by the standard sequence
  \[ 0 \To \mu_8 \To \SL(8) \To \PSL(8) \To 0 \]
  and the embedding $\CJ[2] \to \PSL(8)$ given by associating to~$T$
  the class of any matrix~$M_T$.
\end{Lemma}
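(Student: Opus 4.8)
The plan is to construct $G$ directly as the subgroup of $\SL(8)$ generated by suitable normalizations of the matrices $M_T$, and to verify that it fits into the stated extension. First I would normalize: for each even $T \neq 0$, the matrix $M_T$ satisfies $M_T^2 = r(T) I_8$ and $\det M_T = r(T)^4$ by~\eqref{eqn:MT}, so I can pick a square root $s(T)$ of $r(T)$ and set $N_T = s(T)^{-1} M_T$; then $N_T^2 = I_8$ and $\det N_T = r(T)^4 s(T)^{-8} = 1$, so $N_T \in \SL(8)$. For odd $T$, I would fix a representative factorization $(G,H)$ with $\deg G = 2$, set $N_T = s(T)^{-1} M_{(G,H)}$ with $s(T)^2 = \Res(G,H)$, again getting $N_T^2 = I_8$ (using $M_{(G,H)}^2 = \Res(G,H) I_8$) and $\det N_T = 1$ by the same computation. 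Set $N_0 = I_8$. Now let $G \subset \SL(8)$ be the subgroup generated by $\{N_T : T \in \CJ[2]\}$ together with $-I_8$.

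Next I would identify the structure of $G$. The key input is~\eqref{eqn:e2pairing}, $M_T M_{T'} = e_2(T,T') M_{T'} M_T$, which passes to the $N_T$ since the scalars $s(T)$ cancel: $N_T N_{T'} = e_2(T,T') N_{T'} N_T$. Because $M_{T'} M_T$ is a scalar multiple of a matrix representing $T + T'$, we get $N_T N_{T'} = c(T,T') N_{T+T'}$ for some scalar $c(T,T')$; squaring both sides and using $N_T^2 = N_{T'}^2 = N_{T+T'}^2 = I_8$ together with the commutation relation shows $c(T,T')^2 = \pm 1$, and a little more care (for instance evaluating the upper-right corner entry, which is $1$ for even nonzero $T$ and $0$ otherwise, or tracking the scalars $s$ through the resultant identity $\Res(G_1 G_2, H)$-type relations) pins down $c(T,T') \in \mu_2$ or at worst $\mu_4$; in any case $c$ is a $2$-cochain valued in roots of unity, so the group generated by the $N_T$ and the scalars they produce is finite and consists of $\{\zeta N_T\}$ for $\zeta$ in a finite cyclic group of scalars. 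The relation $N_T^2 = I_8$ forces that scalar group to be exactly $\mu_2 = \{\pm I_8\}$: any scalar $\zeta I_8 \in G$ arises as a product of $N_T$'s, hence as $\zeta' N_{T_0}$ for some $T_0$ and scalar $\zeta'$, and being scalar forces $T_0 = 0$, so $\zeta I_8 = \pm N_0 = \pm I_8$ after absorbing squares. This yields the exact sequence $0 \to \mu_2 \to G \to \CJ[2] \to 0$, with the map $G \to \CJ[2]$ sending $\pm N_T \mapsto T$ (well-defined because the only scalars in $G$ are $\pm I_8$), and its compatibility with $\SL(8) \to \PSL(8)$ and the embedding $\CJ[2] \hookrightarrow \PSL(8)$, $T \mapsto [M_T] = [N_T]$, is immediate from the construction.

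Two points need a little care, and I expect the scalar bookkeeping to be the main obstacle. First, one must check that $G \to \CJ[2]$ is surjective and that $G$ really is an extension by $\mu_2$ and not by a larger $\mu_{2^k}$: this is exactly the assertion that the cocycle $c(T,T')$ can be taken $\mu_2$-valued after rechoosing the square roots $s(T)$, equivalently that the obstruction class in $H^2(\CJ[2], \mu_?)$ refines to $\mu_2$ — here the cleanest route is to avoid cohomology and argue concretely as above that $N_T N_{T'} N_{T+T'}^{-1}$ is a scalar whose square is $1$ (from $N^2 = I$ and~\eqref{eqn:e2pairing}), so it lies in $\mu_2$ on the nose. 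Second, for odd $T$ the normalization depends on the choice of factorization $(G,H)$ and of $s(T)$; changing $(\lambda G, \lambda^{-1} H)$ multiplies $M_{(G,H)}$ by $\lambda^2$ and $\Res$ by $\lambda^4$, so $N_T$ is unchanged, and changing the sign of $s(T)$ changes $N_T$ by $-I_8 \in \mu_2$ — so the image of $N_T$ in $G/\mu_2$ is well-defined, which is all that is needed. Once these are in place, $G$ is the desired subgroup of $\SL(8)$ and the sequence is exact.
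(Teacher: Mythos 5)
Your normalization is precisely the one that does not work, and the step where you claim the scalar ambiguity is $\mu_2$-valued is the gap. If you insist on $N_T^2 = I_8$ for \emph{all}~$T$, then squaring $N_T N_{T'} = c(T,T')\,N_{T+T'}$ and using~\eqref{eqn:e2pairing} gives $c(T,T')^2 = e_2(T,T')$, \emph{not} $c(T,T')^2 = 1$. Since $e_2$ is nondegenerate, there are pairs with $e_2(T,T') = -1$ (e.g.\ two odd points whose $2$-element root sets meet in one point), and for such a pair $c(T,T') = \pm i$. Hence the group you generate contains $iI_8$, its scalar subgroup is $\mu_4$, and your map $G \to \CJ[2]$ is four-to-one rather than two-to-one; no rechoice of the square roots $s(T)$ can repair this, because the obstruction $c(T,T')^2 = e_2(T,T')$ is independent of those choices. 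Your closing assertion that ``$N_T N_{T'} N_{T+T'}^{-1}$ is a scalar whose square is $1$'' is exactly the false step, and your argument that any scalar in $G$ is $\pm I_8$ ``after absorbing squares'' silently assumes what is to be proved. This is not a cosmetic issue: the paper's Remark following the Lemma points out that in genus~$2$ one really does only get a fourfold cover, for exactly this reason.

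The paper's proof avoids this by normalizing $\tilde M_T^2 = \eps(T) I_8$ instead of $N_T^2 = I_8$ (so $\tilde M_T = \gamma M_T$ with $\gamma^2 c = \eps(T)$, where $M_T^2 = cI_8$; one still checks $\det \tilde M_T = 1$ from~\eqref{eqn:MT}). With this twist the computation
\[ (\tilde M_T \tilde M_{T'})^2 = e_2(T,T')\,\tilde M_T^2 \tilde M_{T'}^2 = e_2(T,T')\,\eps(T)\eps(T') I_8 = \eps(T+T') I_8 \]
uses the identity $e_2(T,T') = \eps(T)\eps(T')\eps(T+T')$ of~\eqref{eqn:Weil} to show that $\tilde M_T \tilde M_{T'}$ satisfies the defining normalization for $T+T'$, hence equals $\pm\tilde M_{T+T'}$, so the cocycle genuinely lands in~$\mu_2$. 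The existence of the quadratic refinement $\eps$ of the Weil pairing (available here because the theta characteristics sit in $\Pic^{\text{even}}$) is the essential input your proposal is missing. The rest of your write-up (well-definedness up to sign, independence of the representative factorization for odd~$T$, compatibility with $\SL(8) \to \PSL(8)$) is fine once the normalization is corrected.
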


\begin{proof}
  Let $T \in \CJ[2]$ and let $M_T \in \GL(8)$ be any matrix associated
  to~$T$. Then $M_T^2 = c I_8$ with some~$c$ (compare~\eqref{eqn:MT}),
  and we let $\tilde{M}_T$ denote one of the two matrices $\gamma M_T$
  where $\gamma^2 c = \eps(T)$. Then $\tilde{M}_T \in \SL(8)$, since
  (again by~\eqref{eqn:MT})
  \[ \det \tilde{M}_T = \gamma^8 \det M_T = (\eps(T) c^{-1})^4 c^4 = 1 . \]
  Since any two choices of~$M_T$ differ only by scaling, $\tilde{M}_T$
  is well-defined up to sign. Among the lifts of the class of~$M_T$ in~$\PSL(8)$
  to~$\SL(8)$, $\pm \tilde{M}_T$ are characterized by the relation
  $\tilde{M}_T^2 = \eps(T) I_8$. We now set
  \[ G = \{\pm \tilde{M}_T : T \in \CJ[2]\} . \]
  It is clear
  that $G$ surjects onto the image of~$\CJ[2]$ in~$\PSL(8)$ and that the
  map is two-to-one. It remains to show that $G$ is a
  group. So let $T, T' \in \CJ[2]$. Then $\tilde{M}_T \tilde{M}_{T'}$
  is a matrix corresponding to~$T+T'$.
  Since (using \eqref{eqn:e2pairing} and~\eqref{eqn:Weil})
  \begin{align*}
    (\tilde{M}_T \tilde{M}_{T'})^2
      &= \tilde{M}_T \tilde{M}_{T'} \tilde{M}_T \tilde{M}_{T'}
       = e_2(T, T') \tilde{M}_T^2 \tilde{M}_{T'}^2 \\
      &= e_2(T, T') \eps(T) \eps(T') I_8
       = \eps(T+T') I_8 ,
  \end{align*}
  we find that $\tilde{M}_T \tilde{M}_{T'} \in G$.
\end{proof}

\begin{Remark}
  Note that the situation here is somewhat different from the situation
  in genus~$2$, as discussed in~\cite{StollH1}. In the even genus hyperelliptic
  case, the theta characteristics live in~$\Pic^{\text{odd}}$ rather than
  in~$\Pic^{\text{even}}$ and can therefore not be identified with the
  $2$-torsion points. The effect is that there is no map $\eps \colon \CJ[2] \to \mu_2$
  that induces the Weil pairing as in~\eqref{eqn:Weil}, so that we have to
  use a fourfold covering of~$\CJ[2]$ in~$\SL(4)$ rather than a double cover.
\end{Remark}

We now proceed to a study of the representations of~$G$ on linear, quadratic
and quartic forms on~$\BP^7$ that are induced by $G \subset \SL(8)$.
The representation~$\rho_1$ on the space~$V_1$ of linear forms is the standard
representation. For its character~$\chi_1$, we find that
\[ \chi_1(\pm I_8) = \pm 8 \qquad\text{and}\qquad \chi_1(\pm \tilde{M}_T) = 0
   \quad\text{for all $T \neq 0$.}
\]
This follows from the observation that $T$ can be written as $T = T' + T''$
with $e_2(T', T'') = -1$. Since
$\pm \tilde{M}_T = \tilde{M}_{T'} \tilde{M}_{T''} = -\tilde{M}_{T''} \tilde{M}_{T'}$,
the trace of~$\tilde{M}_T$ must be zero. We deduce that~$\rho_1$ is irreducible.
($\rho_1$ is essentially the representation~$V(\delta)$ in~\cite{Mumford},
where $\delta = (2,2,2)$ in our case.)

The representation~$\rho_2$ on the space~$V_2$ of quadratic forms
is the symmetric square of~$\rho_1$.
Since $\pm I_8$ act trivially on even degree forms, $\rho_2$ descends to a
representation of~$\CJ[2]$. Its character~$\chi_2$ is given by
\begin{align*}
  \chi_2(0) &= 36 \qquad\text{and}\qquad \\
  \chi_2(T) &= \tfrac{1}{2}\bigl(\chi_1(\tilde{M}_T)^2 + \chi_1(\tilde{M}_T^2)\bigr)
             = \tfrac{1}{2}(0 + 8 \eps(T))
             = 4 \eps(T) \quad \text{for $T \neq 0$.}
\end{align*}
Since $\CJ[2]$ is abelian, this representation has to split into a direct sum
of one-dimensional representations. Define the character~$\chi_T$ of~$\CJ[2]$
by $\chi_T(T') = e_2(T, T')$. Then the above implies that
\begin{equation} \label{eqn:rho2}
  \rho_2 = \bigoplus_{T \colon \eps(T) = 1} \chi_T .
\end{equation}
So for each even $T \in \CJ[2]$, there is a one-dimensional eigenspace
of quadratic forms such that the action of~$T'$ is given by multiplication
with~$e_2(T, T')$. For $T = 0$, this eigenspace is spanned by the invariant
quadratic~\eqref{quadrel}.

\begin{Definition}
  We set
  \[ y_0 = 2(\xi_1 \xi_8 - \xi_2 \xi_7 + \xi_3 \xi_6 - \xi_4 \xi_5) ; \]
  this is the quadratic form corresponding to~$S$, since
  $y_0(\vxi) = \vxi S \vxi^\top = \langle \vxi, \vxi \rangle_S$.
  For nontrivial even~$T$, we denote by~$y_T$ the
  form in the eigenspace corresponding to~$T$ that has coefficient~$1$ on~$\xi_8^2$.
  We will see that this makes sense, i.e., that this coefficient is always nonzero.
\end{Definition}

\begin{Lemma} \label{L:y}
  For every nontrivial even $2$-torsion point~$T$, the matrix corresponding
  to the quadratic form~$y_T$ is the symmetric matrix $S M_T$.
  In particular, if $T$ corresponds to a factorization
  $F = GH$ into two polynomials of degree~4, then the coefficients of~$y_T$ are
  polynomials in the coefficients of~$G$ and~$H$ with integral coefficients, and
  the coefficients of the monomials $\xi_i \xi_j$ with $i \neq j$ are
  divisible by~2.
\end{Lemma}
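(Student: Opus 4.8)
The plan is to identify $y_T$ explicitly with the quadratic form attached to the symmetric matrix $S M_T$, and then read off the integrality and divisibility claims from the known integrality of the entries of $M_{(G,H)}$.

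First I would establish that $S M_T$ is symmetric: this is precisely the relation $(S M_T)^\top = \eps(T) S M_T$ noted just before the statement, together with $\eps(T) = 1$ for even $T$. So $S M_T$ does define a quadratic form $q_T(\vxi) = \vxi (S M_T) \vxi^\top$ (up to the usual factor-of-two conventions on off-diagonal entries). Next I would check that $q_T$ lies in the eigenspace corresponding to $T$, i.e.\ that the action of $\tilde M_{T'}$ sends $q_T$ to $e_2(T,T') q_T$. Concretely, the action on quadratic forms sends the matrix $N$ of a form to $\tilde M_{T'}^\top N \tilde M_{T'}$ (or its transpose-averaged version), so I must show
\[ M_{T'}^\top (S M_T) M_{T'} = e_2(T,T')\, (\text{scalar})\, S M_T . \]
Using $M_{T'}^\top S M_{T'} = \Res(G',H') S$ (the invariance of the quadratic relation, displayed at the end of Section~\ref{S:2tors}) and the commutation relation \eqref{eqn:e2pairing}, $M_T M_{T'} = e_2(T,T') M_{T'} M_T$, one gets
\[ M_{T'}^\top S M_T M_{T'} = \eps(T') S M_{T'} M_{T'}^{-1} M_T M_{T'}
   = \eps(T') e_2(T,T') S M_T \cdot (\text{scalar}), \]
after massaging with $M_{T'}^2$ being scalar; so $q_T$ is indeed in the $\chi_T$-eigenspace. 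Since that eigenspace is one-dimensional by~\eqref{eqn:rho2}, $q_T$ is a scalar multiple of $y_T$ — provided $q_T \neq 0$, which holds because $S M_T$ is invertible ($M_T$ and $S$ both are).

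It then remains to pin down the normalization: I must show the coefficient of $\xi_8^2$ in $q_T$ is nonzero (this simultaneously justifies the Definition's claim that $y_T$ has a well-defined coefficient-$1$-on-$\xi_8^2$ normalization) and equals the $(8,8)$ entry of $S M_T$, which by the shape of $S$ is the $(1,8)$ entry of $M_T$, i.e.\ the upper-right corner of $M_T$. By the remark in Section~\ref{S:2tors}, for nontrivial even $T$ this entry is exactly $1$. Hence $q_T = y_T$ on the nose, the matrix of $y_T$ is $S M_T$, and the integrality/divisibility assertions are immediate: the entries of $M_{(G,H)}$ are integral polynomials in the coefficients of $G$ and $H$ (stated in Section~\ref{S:2tors}), so multiplying by $S$ (a signed permutation) keeps them integral; and the off-diagonal entries of the \emph{symmetric matrix} $S M_T$ are the coefficients of $2 b_i b_j - a_i c_j - a_j c_i$-type terms, i.e.\ the coefficient of $\xi_i \xi_j$ ($i \neq j$) in the form is twice the corresponding matrix entry, giving the divisibility by~$2$.

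The main obstacle I expect is bookkeeping with the two competing conventions for ``the matrix of a quadratic form'': the off-diagonal-halving convention used to define $\langle\cdot,\cdot\rangle_S$ and the ``symmetric matrix equals $L^\top D L$'' convention used earlier for the $\eta_{ij}$. I would fix one convention at the outset (symmetric matrix $N$, form $\vxi N \vxi^\top$, so the $\xi_i\xi_j$-coefficient for $i<j$ is $2 N_{ij}$), verify that the group action is $N \mapsto M^\top N M$ in that convention, and carry it consistently; the divisibility-by-$2$ claim is then automatic rather than something needing a separate argument. A secondary, purely mechanical point is verifying the scalar factors coming from $M_{T'}^2 = \Res(G',H') I_8$ cancel correctly so that the eigenvalue is exactly $e_2(T,T')$ and not some resultant multiple — but this is forced anyway once we know $y_T$ is, up to scalar, the unique element of a one-dimensional eigenspace on which $\CJ[2]$ acts through a character.
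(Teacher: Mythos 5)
Your proof is correct and follows essentially the same route as the paper: show $S M_T$ is symmetric and lies in the $\chi_T$-eigenspace using the transpose relation $(SM_{T'})^\top = \eps(T')SM_{T'}$ together with the commutation relation~\eqref{eqn:e2pairing}, then use the fact that the upper-right entry of~$M_T$ is~$1$ to fix the normalization, reading off integrality and the factor~$2$ on the off-diagonal coefficients directly from the matrix. (The paper works with the normalized lifts $\tilde{M}_{T'}$ so the resultant scalars you track cancel automatically; also, the inserted $M_{T'}M_{T'}^{-1}$ in your displayed computation is a slip --- the correct chain is $M_{T'}^\top S M_T M_{T'} = \eps(T') S M_{T'} M_T M_{T'} = \eps(T') e_2(T,T') S M_T M_{T'}^2$ --- but your final conclusion and the surrounding logic are right.)
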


\begin{proof}
  We show that
  $\tilde{M}_{T'}^\top (S M_T) \tilde{M}_{T'} = e_2(T,T') S M_T$. We use that
  $\tilde{M}_{T'}^2 = \eps(T') I_8$,
  $S \tilde{M}_{T'} = \eps(T') \tilde{M}_{T'}^\top S$
  and the fact that the Weil pairing is given by commutators. This gives that
  \[ \tilde{M}_{T'}^\top S M_T \tilde{M}_{T'}
      = \eps(T') S \tilde{M}_{T'} M_T \tilde{M}_{T'}
      = \eps(T') e_2(T,T') S M_T \tilde{M}_{T'}^2
      = e_2(T,T') S M_T
  \]
  as desired, so $S M_T$ gives a quadratic form in the correct eigenspace.
  Since the upper right entry of~$M_T$ is~$1$, the lower right entry, which
  corresponds to the coefficient of~$\xi_8^2$, of~$S M_T$ is~$1$, so that
  we indeed obtain~$y_T$.
\end{proof}

We can express $y_T$ as $y_T(\vxi) = \langle \vxi, \vxi M_T^\top \rangle_S$.

\begin{Remark}
  Note that if $T$ is an odd $2$-torsion point, represented by the factorization
  $(G,H)$, then the same argument shows that the alternating bilinear form
  corresponding to the matrix $S M_{(G,H)}$ is multiplied by~$e_2(T,T')$ under
  the action of~$T' \in \CJ[2]$.
\end{Remark}

We set
\[ (\eps_1, \eps_2, \eps_3, \eps_4, \eps_5, \eps_6, \eps_7, \eps_8)
      = (1, -1, 1, -1, -1, 1, -1, 1) ;
\]
these are the entries occurring in~$S$ along the diagonal from upper right
to lower left.

\begin{Corollary} \label{Cor:y}
  Let $T$ be a nontrivial even $2$-torsion point with image on~$\CK$
  given by
  \[ (1 : \tau_2 : \tau_3 : \tau_4 : \tau_5 : \tau_6 : \tau_7 : \tau_8) . \]
  Then
  \[ y_T = \xi_8^2 + 2 \sum_{j=2}^8 \eps_j \tau_j  \xi_{9-j} \xi_8
             + (\text{terms not involving~$\xi_8$}) .
  \]
\end{Corollary}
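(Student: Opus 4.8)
The plan is to read off the coefficients of the cross‐terms $\xi_j\xi_8$ in $y_T$ directly from the matrix identity established in Lemma~\ref{L:y}, namely that $y_T$ corresponds to the symmetric matrix $S M_T$. First I would recall that the image of $T$ on $\CK$ is the point $(1:\tau_2:\cdots:\tau_8)$, and that by definition of the action of $\CJ[2]$ on $\BP^7$ (Section~\ref{S:2tors}) this means the last column of $M_T$ is proportional to $(1,\tau_2,\ldots,\tau_8)^\top$; since the upper right entry of $M_T$ is $1$ for nontrivial even $T$, the last column is exactly $(1,\tau_2,\ldots,\tau_8)^\top$. The coefficient of $\xi_i\xi_j$ (for $i\neq j$) in the quadratic form attached to a symmetric matrix $N$ is $N_{ij}+N_{ji}=2N_{ij}$, and the coefficient of $\xi_i^2$ is $N_{ii}$. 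So I need the last row and last column of $S M_T$.

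Next I would compute the last column of $S M_T$. Since $S$ is the anti‐diagonal matrix with entries $\eps_1,\ldots,\eps_8$ read from the upper‐right corner down to the lower‐left (i.e.\ $S_{i,9-i}=\eps_i$, all other entries zero), multiplying $M_T$ on the left by $S$ reverses the order of the rows and multiplies row $i$ of the result by $\eps_i$. Hence the $(i,8)$ entry of $SM_T$ equals $\eps_i$ times the $(9-i,8)$ entry of $M_T$, which is $\eps_i\tau_{9-i}$ (with the convention $\tau_1=1$). Therefore the coefficient of $\xi_i\xi_8$ in $y_T$ is $2\eps_i\tau_{9-i}$ for $i<8$, and the coefficient of $\xi_8^2$ is the $(8,8)$ entry of $SM_T$, which is $\eps_8$ times the $(1,8)$ entry of $M_T$, namely $\eps_8\cdot 1=1$. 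Re‐indexing the sum by $j=9-i$ (so $j$ runs from $2$ to $8$, and $\eps_i=\eps_{9-j}$; but note $\eps$ is palindromic, $\eps_i=\eps_{9-i}$, so $\eps_i=\eps_j$) gives exactly
\[
  y_T = \xi_8^2 + 2\sum_{j=2}^{8}\eps_j\tau_j\,\xi_{9-j}\xi_8
          + (\text{terms not involving }\xi_8),
\]
since the $j=8$ term $2\eps_8\tau_8\xi_1\xi_8$ is correctly produced (here $\xi_{9-8}=\xi_1$, and $\xi_1=1$ as a coordinate but we keep it symbolic in the form), matching the claimed formula.

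The only mild subtlety — and the step I would be most careful about — is bookkeeping with the indices and the symmetry of $S$: one must check that $S$ is symmetric (stated in Section~\ref{S:2tors}: $S=S^\top=S^{-1}$), that $SM_T$ is genuinely symmetric (this is the content of Lemma~\ref{L:y}, relying on $(SM_T)^\top=\eps(T)SM_T$ with $\eps(T)=1$ for even $T$), and that the palindromic property $\eps_i=\eps_{9-i}$ of the tuple $(1,-1,1,-1,-1,1,-1,1)$ holds, so that the factor attached to $\xi_{9-j}\xi_8$ can be written as $\eps_j$ rather than $\eps_{9-j}$. Once these are in place the computation is purely formal and the corollary follows immediately from Lemma~\ref{L:y}; no new geometric input is needed beyond identifying the last column of $M_T$ with the coordinate vector of the image point, which is just the definition of how $M_T$ represents translation by $T$ on $\BP^7$ applied to the image $o=(0:\cdots:0:1)$ of the origin.
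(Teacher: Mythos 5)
Your proof is correct and follows essentially the same route as the paper: identify the last column of $M_T$ as $(1,\tau_2,\ldots,\tau_8)^\top$ (since $M_T$ sends the origin to $\kappa(T)$ and has upper-right entry $1$), then read off the coefficients of the $\xi_i\xi_8$ monomials from the last column of $S M_T$ via Lemma~\ref{L:y}. The only difference is that you make the re-indexing and the palindromic property $\eps_i=\eps_{9-i}$ explicit, which the paper leaves implicit in the phrase ``reverses the order and introduces the signs $\eps_j$.''
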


A similar statement is true for $T = 0$ if we take coordinates $(0 : \ldots : 0 : 1)$:
we have that $y_0 = 2 \xi_1 \xi_8 + (\text{terms not involving~$\xi_8$})$.

\begin{proof}
  The last column of~$M_T$ has entries $1, \tau_2, \ldots, \tau_8$
  (since $M_T$ maps the origin to the image of~$T$ and has upper right entry~$1$).
  Multiplication by~$S$
  from the left reverses the order and introduces the signs~$\eps_j$.
  Since the coefficients of~$y_T$ of monomials involving~$\xi_8$ are
  given by the entries of the last column of~$S M_T$ by Lemma~\ref{L:y},
  the claim follows.
\end{proof}

We define a pairing on the space $V_1 \otimes V_1$ of bilinear forms
as follows. If the bilinear forms $\phi$ and~$\phi'$ are represented by
matrices $A$ and~$A'$ with respect to our standard basis $\xi_1, \ldots, \xi_8$
of~$V_1$, then $\langle \phi, \phi' \rangle = \tfrac{1}{8} \Tr(A^\top A')$
(the scaling has the effect of giving the standard quadratic form norm~1).

For an even $2$-torsion point~$T$,
we write $\tilde{y}_T$ for the symmetric bilinear form corresponding to
the matrix $S \tilde{M}_T$ (this is well-defined up to sign) and
$\tilde{z}_T$ for the symmetric bilinear form corresponding
to $S \tilde{M}_T^\top = \tilde{M}_T S$.
Also, $z_T$ will denote the form corresponding to $S M_T^\top = M_T S$.
Then, since $S (M_T S) S = S M_T$, we have the relation
$z_T(\vxi) = y_T(\vxi S)$; explicitly,
\[ z_T(\xi_1, \xi_2, \xi_3, \xi_4, \xi_5, \xi_6, \xi_7, \xi_8)
     = y_T(\xi_8, -\xi_7, \xi_6, -\xi_5, -\xi_4, \xi_3, -\xi_2, \xi_1) .
\]

\begin{Lemma} \label{L:ortho}
  For all even $2$-torsion points $T$ and~$T'$, we have that
  \[ \langle \tilde{z}_T, \tilde{y}_{T'} \rangle
       = \begin{cases}
           1 & \text{if $T = T'$,} \\
           0 & \text{if $T \neq T'$.}
         \end{cases}
  \]
  Equivalently,
  \[ \langle z_T, y_{T'} \rangle
       = \begin{cases}
           r(T) & \text{if $T = T'$,} \\
           0 & \text{if $T \neq T'$.}
         \end{cases}
  \]
  Here we restrict the scalar product defined above to $V_2 \subset V_1 \otimes V_1$.
\end{Lemma}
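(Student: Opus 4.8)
The plan is to compute $\langle \tilde z_T, \tilde y_{T'}\rangle = \tfrac{1}{8}\Tr\bigl((S\tilde M_T^\top)^\top (S\tilde M_{T'})\bigr) = \tfrac{1}{8}\Tr(\tilde M_T S \cdot S \tilde M_{T'}) = \tfrac{1}{8}\Tr(\tilde M_T \tilde M_{T'})$, using $S^2 = I_8$. So the scalar product reduces to $\tfrac{1}{8}\Tr(\tilde M_T \tilde M_{T'})$, and the statement becomes a computation of traces of products of the lifted $2$-torsion matrices. When $T = T'$ we have $\tilde M_T^2 = \eps(T) I_8$ by the construction in Lemma~\ref{L:J2lift}, so $\Tr(\tilde M_T^2) = 8\eps(T)$. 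But wait — we want the answer $1$, not $\eps(T)$, in the $T = T'$ case; and indeed for $T = T' = 0$, $\tilde M_0 = I_8$ and $\Tr(I_8)/8 = 1 = \eps(0)$, which already forces me to double check the normalization. Re-examining: $\langle z_T, y_{T'}\rangle = \tfrac18\Tr(M_T S \cdot S M_{T'}) = \tfrac18\Tr(M_T M_{T'})$, and for $T = T'$ this is $\tfrac18\Tr(M_T^2) = \tfrac18\Tr(r(T) I_8) = r(T)$, matching the second displayed formula exactly. For the first (tilde) formula, $\tilde M_T = \gamma M_T$ with $\gamma^2 r(T) = \eps(T)$, so $\tilde M_T^2 = \eps(T) I_8$ and $\tfrac18\Tr(\tilde M_T^2) = \eps(T)$; to reconcile with the claimed value $1$, I should note that the two formulas are genuinely ``equivalent'' only in the sense of determining the off-diagonal vanishing and $\langle z_T, y_T\rangle = r(T)$ — but $\eps(T) \in \{\pm 1\}$ means the $T=T'$ value for the tilde version is $\eps(T)$, so I suspect the intended reading is that each $y_T$ (equivalently $z_T$) is only defined up to the scaling that makes this work out, or that one takes the tilde-forms with a sign so that the product is $1$; in writing the proof I would follow the paper and simply record $\langle \tilde z_T, \tilde y_T\rangle = \eps(T) \cdot (\pm 1)^2$, flagging that the well-definedness up to sign of $\tilde M_T$ is exactly what is being used.

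For the off-diagonal case $T \neq T'$, the key is that $M_T M_{T'}$ is (up to a nonzero scalar) a matrix $M_{T+T''}$-type representative of the $2$-torsion point $T + T'$, which is itself nontrivial since $T \neq T'$. By the character computation in Section~\ref{S:action} — specifically the observation that $\chi_1(\tilde M_U) = 0$ for every nontrivial $U \in \CJ[2]$, which follows from writing $U = U' + U''$ with $e_2(U', U'') = -1$ and deducing $\Tr(\tilde M_U) = -\Tr(\tilde M_U) = 0$ — the trace of any matrix representing a nontrivial $2$-torsion class vanishes. Hence $\Tr(M_T M_{T'}) = 0$, and so $\langle z_T, y_{T'}\rangle = 0$. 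The same argument with tildes gives $\langle \tilde z_T, \tilde y_{T'}\rangle = 0$. This is the heart of the proof and it is essentially immediate from the material already assembled.

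The only genuine subtlety — and the step I expect to need the most care — is the bookkeeping around scalars: verifying that $\langle \phi, \phi'\rangle = \tfrac18\Tr(A^\top A')$ applied to the \emph{symmetric} bilinear forms $\tilde z_T$, $\tilde y_{T'}$ really does collapse to $\tfrac18\Tr(\tilde M_T \tilde M_{T'})$ after using $S = S^\top = S^{-1}$ and the symmetry $(S\tilde M_{T'})^\top = \eps(T')\, S\tilde M_{T'}$ (which transfers across the Definition preceding Lemma~\ref{L:ortho}), and then confirming the resulting value is $\eps(T)$ (equivalently $1$ after the permitted sign choice) for $T = T'$ and $0$ otherwise. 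Concretely I would write: $\langle \tilde z_T, \tilde y_{T'}\rangle = \tfrac18\Tr\bigl((S\tilde M_T^\top)(S\tilde M_{T'})\bigr)$; since $\tilde M_T$ represents a $2$-torsion point we have $S\tilde M_T^\top = \eps(T)^{-1} \tilde M_T S = \eps(T)\tilde M_T S$, so the trace becomes $\eps(T)\Tr(\tilde M_T S S \tilde M_{T'}) = \eps(T)\Tr(\tilde M_T \tilde M_{T'})$; now apply $\Tr(\tilde M_T^2) = 8\eps(T)$ when $T = T'$ (giving $\eps(T)^2 = 1$, up to the sign ambiguity of the lifts) and the vanishing of traces of nontrivial-class representatives when $T \neq T'$. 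The passage to the non-tilde forms $z_T, y_{T'}$ is then just rescaling: $M_T = \gamma^{-1}\tilde M_T$ with $\gamma^{-2} = r(T)/\eps(T)$, so $\langle z_T, y_T\rangle = (r(T)/\eps(T))\langle \tilde z_T,\tilde y_T\rangle \cdot \eps(T) = r(T)$, and the off-diagonal entries stay zero. The final remark that this restricts the scalar product to $V_2 \subset V_1\otimes V_1$ is automatic since all the forms involved are symmetric.
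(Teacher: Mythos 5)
Your overall strategy is exactly the paper's: reduce $\langle \tilde{z}_T, \tilde{y}_{T'}\rangle$ to $\tfrac18\Tr(\tilde{M}_T\tilde{M}_{T'})$ via $(S\tilde{M}_T^\top)^\top(S\tilde{M}_{T'}) = \tilde{M}_T S^2 \tilde{M}_{T'} = \tilde{M}_T\tilde{M}_{T'}$, and for $T\neq T'$ use that $\tilde{M}_T\tilde{M}_{T'} = \pm\tilde{M}_{T+T'}$ represents a nontrivial $2$-torsion class and hence has trace zero. That part is correct and is the heart of the matter.

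The genuine problem is your handling of the diagonal case $T = T'$. You correctly compute $\tfrac18\Tr(\tilde{M}_T^2) = \eps(T)$, notice this does not visibly match the claimed value $1$, and then try to repair it by appealing to ``the permitted sign choice'' of $\tilde{M}_T$ or to a rescaling of $y_T$. That repair cannot work: replacing $\tilde{M}_T$ by $-\tilde{M}_T$ leaves $\tilde{M}_T^2$ unchanged, so no sign convention turns $\eps(T)$ into $1$ if they were actually different. The correct (and one-line) resolution, which the paper uses implicitly, is that the lemma concerns \emph{even} $2$-torsion points only: by definition $\eps(T)=1$ for even~$T$, so $\tilde{M}_T^2 = I_8$, the trace is~$8$, and the pairing is~$1$. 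Once you insert that observation, all of your subsequent bookkeeping (including the passage to $\langle z_T, y_T\rangle = \gamma^{-2} = r(T)/\eps(T) = r(T)$) collapses to the paper's argument, and the speculative paragraph about redefining $y_T$ up to scaling should be deleted, since $y_T$ is pinned down by its coefficient~$1$ on~$\xi_8^2$ and is not free to be rescaled.
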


\begin{proof}
  The claim is that $\Tr\bigl((S \tilde{M}_T^\top)^\top (S \tilde{M}_{T'})\bigr)$
  is zero if $T \neq T'$ and equals~$8$ if $T = T'$. We have
  \[ \Tr\bigl((S \tilde{M}_T^\top)^\top (S \tilde{M}_{T'})\bigr)
       = \Tr(\tilde{M}_T S^2 \tilde{M}_{T'})
       = \Tr(\tilde{M}_T \tilde{M}_{T'})
       = \pm \Tr(\tilde{M}_{T+T'}) .
  \]
  If $T \neq T'$, then this trace is zero, as we had already seen.
  If $T = T'$, then $\pm \tilde{M}_{T+T'} = I_8$, so the result is~$8$ as desired.
\end{proof}

This allows us to express the $\xi_j^2$ in terms of the~$y_T$.
We set $r(0) = 1$ and $M_0 = I_8$.
We denote the coefficient of~$\xi_i \xi_j$ in a quadratic form $q \in V_2$
by $[\xi_i \xi_j] q$.

\begin{Lemma} \label{L:x2iny}
  For every $j \in \{1,2,\ldots,8\}$, we have that
  \[ \xi_j^2 = \sum_{T \colon \eps(T) = 1} \frac{[\xi_{9-j}^2] y_T}{8 r(T)} y_T . \]
  Similarly, for $1 \le i < j \le 8$, we have that
  \[ 2 \xi_i \xi_j = \eps_i \eps_j \sum_{T \colon \eps(T) = 1}
                       \frac{[\xi_{9-i} \xi_{9-j}] y_T}{8 r(T)} y_T .
  \]
\end{Lemma}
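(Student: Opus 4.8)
The strategy is to use the orthogonality relation from Lemma~\ref{L:ortho} to expand an arbitrary quadratic form in the basis $\{y_T : \eps(T) = 1\}$ of~$V_2$, and then to identify the coefficients. First I would note that, by~\eqref{eqn:rho2}, the forms $y_T$ for the $36$ even $2$-torsion points~$T$ (including $y_0$) span~$V_2$, which has dimension~$36$; hence they form a basis. So any $q \in V_2$ can be written uniquely as $q = \sum_{T \colon \eps(T)=1} c_T y_T$, and pairing both sides with $z_{T'}$ and using Lemma~\ref{L:ortho} (the orthogonality $\langle z_T, y_{T'}\rangle = r(T)\delta_{T,T'}$, valid also for $T = 0$ since $r(0) = 1$ and $z_0 = y_0$, as $M_0 = S M_0^\top = I_8$) yields $c_{T'} = \langle z_{T'}, q\rangle / r(T')$.

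Next I would compute $\langle z_T, q \rangle$ explicitly in terms of the coefficients of~$q$ when $q = \xi_j^2$ or $q = 2\xi_i\xi_j$. By definition, $z_T$ is the bilinear form with matrix $M_T S = S M_T^\top$, and the pairing is $\langle z_T, q \rangle = \tfrac18 \Tr\bigl((M_T S)^\top A_q\bigr)$ where $A_q$ is the (symmetric) matrix of~$q$. A cleaner route: use the identity $z_T(\vxi) = y_T(\vxi S)$ recorded just before Lemma~\ref{L:ortho}, which says that, after the signed coordinate reversal $\xi_i \mapsto \eps_{9-i}\xi_{9-i}$ (equivalently $\vxi \mapsto \vxi S$), the form $z_T$ becomes $y_T$. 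The pairing $\langle \cdot, \cdot \rangle$ on~$V_2$ is, up to the factor~$\tfrac18$, just the coefficientwise inner product in the basis of monomials $\xi_i^2$ (weight~$1$) and $\xi_i\xi_j$, $i<j$ (weight~$2$); so $\langle z_T, q\rangle$ picks out, from~$q$ read in the $S$-transformed coordinates, the coefficient pattern dual to that of~$y_T$. Concretely, for $q = \xi_j^2$ the only contribution comes from the $\xi_{9-j}^2$-coefficient of $y_T$ (since applying $\vxi \mapsto \vxi S$ to $z_T$ sends the diagonal entry in position $9-j$ to position~$j$), giving $\langle z_T, \xi_j^2\rangle = \tfrac18 [\xi_{9-j}^2] y_T$; and for $q = 2\xi_i\xi_j$ with $i<j$, tracking the two sign factors $\eps_{9-i}\eps_{9-j} = \eps_i\eps_j$ introduced by the coordinate change gives $\langle z_T, 2\xi_i\xi_j\rangle = \tfrac18 \eps_i\eps_j\, [\xi_{9-i}\xi_{9-j}] y_T$. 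Substituting these into $c_T = \langle z_T, q\rangle/r(T)$ produces exactly the two claimed formulas.

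The main obstacle is bookkeeping the sign factors and the index reversal $i \mapsto 9-i$ correctly, i.e.\ verifying that the coefficientwise pairing, when pulled through the substitution $\vxi \mapsto \vxi S$, really does send the $\xi_i\xi_j$-coefficient of~$q$ against the $\xi_{9-i}\xi_{9-j}$-coefficient of~$y_T$ with the sign $\eps_i\eps_j$ (and with no stray factor of~$2$ for the off-diagonal terms, given that I have paired against $2\xi_i\xi_j$ rather than $\xi_i\xi_j$). This is most safely done by writing $\langle z_T, q\rangle = \tfrac18\Tr\bigl((M_T S)^\top A_q\bigr) = \tfrac18\Tr\bigl(S M_T^\top A_q\bigr)$ and reading off the entry of $S M_T^\top$ in the appropriate position using that $[\xi_i\xi_j]y_T$ is (up to the usual factor~$2$ on off-diagonal entries) the $(i,j)$-entry of $SM_T$, together with $S = S^\top$; the reversal $i\mapsto 9-i$ and the signs $\eps_i$ then come out of multiplication by~$S$ exactly as in the proof of Corollary~\ref{Cor:y}. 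Everything else is the routine linear algebra of expanding in an orthogonal basis.
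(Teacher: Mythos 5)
Your proposal is correct and follows essentially the same route as the paper: expand the quadratic form in the basis $\{y_T\}_{\eps(T)=1}$ using the duality $\langle z_T, y_{T'}\rangle = r(T)\,\delta_{T,T'}$ from Lemma~\ref{L:ortho}, then evaluate $\langle z_T, q\rangle$ via the trace pairing and convert $[\xi_i\xi_j]z_T$ into $\eps_i\eps_j\,[\xi_{9-i}\xi_{9-j}]y_T$ using $z_T(\vxi) = y_T(\vxi S)$. Your sign and factor-of-$2$ bookkeeping (including the observation that $8\langle z_T,\xi_i\xi_j\rangle$ is half the coefficient of $\xi_i\xi_j$ in $z_T$) matches the paper's computation exactly.
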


\begin{proof}
  We have by Lemma~\ref{L:ortho} that
  \begin{align*}
    \xi_j^2 &= \sum_{T \colon \eps(T) = 1} \langle \tilde{z}_T, \xi_j^2 \rangle \tilde{y}_T
             = \sum_{T \colon \eps(T) = 1} \frac{\langle z_T, \xi_j^2 \rangle}{r(T)} y_T \\
            &= \sum_{T \colon \eps(T) = 1} \frac{[\xi_j^2] z_T}{8 r(T)} y_T
             = \sum_{T \colon \eps(T) = 1} \frac{[\xi_{9-j}^2] y_T}{8 r(T)} y_T .
  \end{align*}
  In the same way, we have for $i \neq j$ that
  \begin{align*}
    2 \xi_i \xi_j
      &= \sum_{T \colon \eps(T) = 1} 2 \langle \tilde{z}_T, \xi_i \xi_j \rangle \tilde{y}_T
       = \sum_{T \colon \eps(T) = 1} 2 \frac{\langle z_T, \xi_i \xi_j \rangle}{r(T)} y_T \\
      &= \sum_{T \colon \eps(T) = 1} \frac{[\xi_i \xi_j] z_T}{8 r(T)} y_T
       =  \eps_i \eps_j\sum_{T \colon \eps(T) = 1}
                                \frac{[\xi_{9-i} \xi_{9-j}] y_T} {8 r(T)} y_T .
  \end{align*}
  (Note that $8 \langle z_T, \xi_i \xi_j \rangle$ is half the coefficient
  of $\xi_i \xi_j$ in~$z_T$.)
\end{proof}

\begin{Corollary} \label{C:sumyT2}
  We have that
  \begin{align*}
    \sum_{T \colon \eps(T) = 1} \frac{1}{8 r(T)} y_T(\vxi) y_T(\vz)
       = \Bigl(\sum_{j=1}^8 \eps_j  \xi_j \zeta_{9-j}\Bigr)^2
       = \langle \vxi, \vz \rangle_S^2 .
  \end{align*}
  In particular, setting $\vz = \vxi$, we obtain that
  \[ \sum_{T \colon \eps(T) = 1} \frac{1}{8 r(T)} y_T^2
      = y_0^2
      = 4 (\xi_1 \xi_8 - \xi_2 \xi_7 + \xi_3 \xi_6 - \xi_4 \xi_5)^2 .
  \]
\end{Corollary}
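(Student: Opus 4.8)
The plan is to read the identity off from Lemma~\ref{L:x2iny}, the only extra ingredient being the elementary observation that the sign vector $(\eps_1,\dots,\eps_8)=(1,-1,1,-1,-1,1,-1,1)$ is palindromic: $\eps_{9-j}=\eps_j$ for every $j\in\{1,\dots,8\}$ (equivalently, the anti-diagonal of~$S$ is symmetric).

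First I would expand the right-hand side. Since $\langle\vxi,\vz\rangle_S=\sum_{j=1}^{8}\eps_j\,\xi_j\,\zeta_{9-j}$, squaring and using $\eps_j^2=1$ gives
\[ \langle\vxi,\vz\rangle_S^{\,2}
    = \sum_{i=1}^{8}\xi_i^2\,\zeta_{9-i}^2
      + \sum_{1\le i<j\le 8} 2\,\eps_i\eps_j\,(\xi_i\xi_j)\,(\zeta_{9-i}\zeta_{9-j}). \]
Next I would rewrite each factor $\zeta_{9-i}^2$ and each product $2\zeta_{9-i}\zeta_{9-j}$ using Lemma~\ref{L:x2iny}, applied with $\vxi$ replaced by~$\vz$ and with the index substitution $j\mapsto 9-j$ (note $i<j\iff 9-j<9-i$, so the ordering hypothesis in the second formula of that lemma is preserved, and the coefficient $[\xi_{9-j}^2]y_T$ turns into $[\xi_j^2]y_T$, and similarly for the off-diagonal coefficients, which do not depend on the order of the two factors). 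Using the palindromy $\eps_{9-i}\eps_{9-j}=\eps_i\eps_j$, this yields
\[ \zeta_{9-i}^2=\sum_{T\colon\eps(T)=1}\frac{[\xi_i^2]\,y_T}{8\,r(T)}\,y_T(\vz),
   \qquad
   2\zeta_{9-i}\zeta_{9-j}=\eps_i\eps_j\sum_{T\colon\eps(T)=1}\frac{[\xi_i\xi_j]\,y_T}{8\,r(T)}\,y_T(\vz). \]

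Substituting these into the expansion above and interchanging the (finite) sums, the factors $\eps_i\eps_j$ introduced here cancel those already present in the cross terms, so that the coefficient of $y_T(\vz)/\bigl(8\,r(T)\bigr)$ becomes $\sum_i([\xi_i^2]y_T)\,\xi_i^2+\sum_{i<j}([\xi_i\xi_j]y_T)\,\xi_i\xi_j=y_T(\vxi)$, by definition of the coefficients. This proves $\langle\vxi,\vz\rangle_S^{\,2}=\sum_{T\colon\eps(T)=1}\tfrac{1}{8\,r(T)}\,y_T(\vxi)\,y_T(\vz)$; the remaining equality in the first displayed line is then just the definition of $\langle\cdot,\cdot\rangle_S$, and the final displayed identity follows by setting $\vz=\vxi$ and recalling that $y_0=\langle\vxi,\vxi\rangle_S=2(\xi_1\xi_8-\xi_2\xi_7+\xi_3\xi_6-\xi_4\xi_5)$.

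There is no genuine obstacle here; the only thing requiring attention is the bookkeeping around the reversal $j\mapsto 9-j$ and checking that the sign factors $\eps_i\eps_j$ cancel exactly, which is precisely what the palindromy of the $\eps_j$ guarantees. If one prefers a more conceptual variant, one can instead observe that the forms $y_T$ with $\eps(T)=1$ form a basis of $V_2=\Sym^2 V_1$ whose dual basis with respect to the pairing $\langle\cdot,\cdot\rangle$ of Lemma~\ref{L:ortho} is $\{z_T/r(T)\}$, so that $\sum_{T}\tfrac{1}{r(T)}\,y_T\otimes z_T$ is the corresponding diagonal element; pairing it against the monomials $\xi_i\xi_j$ and using the values $\langle z_T,\xi_i\xi_j\rangle$ recorded in the proof of Lemma~\ref{L:x2iny} reproduces the same formula. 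The route via Lemma~\ref{L:x2iny} is, however, the most economical.
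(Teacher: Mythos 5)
Your proof is correct and is essentially the paper's own argument: the paper likewise derives the identity directly from Lemma~\ref{L:x2iny}, merely running the same chain of substitutions in the opposite direction (starting from the sum over~$T$ and collecting monomials in~$\vxi$, rather than expanding $\langle\vxi,\vz\rangle_S^2$ first), with the palindromy $\eps_{9-j}=\eps_j$ doing the same sign bookkeeping.
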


\begin{proof}
  We compute using Lemma~\ref{L:x2iny}:
  \begin{align*}
    \sum_{T \colon \eps(T) = 1} &\frac{1}{8 r(T)} y_T(\vxi) y_T(\vz) \\
      &= \sum_{i=1}^8 \xi_i^2
           \sum_{T \colon \eps(T) = 1} \frac{[\xi_i^2] y_T(\vxi)}{8 r(T)} y_T(\vz)
          + \sum_{1 \le i < j \le 8} \xi_i \xi_j
              \sum_{T \colon \eps(T) = 1} \frac{[\xi_i \xi_j] y_T(\vxi)}{8 r(T)} y_T(\vz) \\
      &= \sum_{i=1}^8 \xi_i^2 \zeta_{9-i}^2
          + 2 \sum_{1 \le i < j \le 8} \eps_i \eps_j \, \xi_i \xi_j \zeta_{9-i} \zeta_{9-j} \\
      &= \Bigl(\sum_{j=1}^8 \eps_j \, \xi_j \zeta_{9-j}\Bigr)^2 . \qedhere
  \end{align*}
\end{proof}

Now we consider the representation~$\rho_4$ of~$\CJ[2]$ on the space~$V_4$
of quartic forms. For its character~$\chi_4$, we have the general formula
\[ \chi_4(T) = \tfrac{1}{24}\bigl(
                 \chi_1(\tilde{M}_T)^4 + 8 \chi_1(\tilde{M}_T) \chi_1(\tilde{M}_T^3)
                   + 3 \chi_1(\tilde{M}_T^2)^2
                   + 6 \chi_1(\tilde{M}_T)^2 \chi_1(\tilde{M}_T^2)
                   + 6 \chi_1(\tilde{M}_T^4)\bigr) .
\]
This gives us that
\[ \chi_4(0) = 330 \qquad\text{and}\qquad
   \chi_4(T) = 10 \quad\text{for $T \neq 0$.}
\]
We deduce that
\begin{equation} \label{eqn:rho4}
  \rho_4 = \chi_0^{\oplus 15} \oplus \bigoplus_{T \neq 0} \chi_T^{\oplus 5} .
\end{equation}


\section{The duplication map and the missing generator of $L(4\Theta)^+$} \label{S:dup}

We continue to work over a field~$k$ of characteristic~$\neq 2$.
We also continue to assume that $F \in k[x,z]$ is squarefree, so that $\CC$ is a smooth
hyperelliptic curve of genus~$3$ over~$k$.

Consider the commutative diagram
\[ \xymatrix{\CJ \ar[r]^{\cdot 2} \ar[d]^{\kappa} & \CJ \ar[d]^{\kappa} \\
             \CK \ar[r]^{\delta} & \CK \ar@{^(->}[r] & \BP^7,
            }
\]
where the map in the top row is multiplication by~$2$ and $\delta$ is the
endomorphism of~$\CK$ induced by it. Pulling back a hyperplane section
to the copy of~$\CJ$ on the right, we obtain a divisor in the class of~$2\Theta$.
Pulling it further back to the copy on the left, we obtain a divisor in the class
of the pull-back of~$2\Theta$ under duplication, which is the class of~$8\Theta$
($\Theta$ is symmetric, so pulling back under multiplication by~$n$ multiplies
its class by~$n^2$). The combined map from the left~$\CJ$ to~$\BP^7$ then
is given by an $8$-dimensional subspace of~$L(8\Theta)^+$; by Corollary~\ref{C:sym4}
this means that~$\delta$ is given by eight quartic forms in~$\vxi$.
Since $\delta$ maps $o$, the image of the origin on~$\CK$, to itself, we
can normalize these quartics so that they evaluate to $(0,\ldots,0,1)$ on~$(0,\ldots,0,1)$.
We use $\vd = (\delta_1, \ldots, \delta_8)$ to denote these quartic forms;
they are determined up to adding a quartic form vanishing on~$\CK$.
We write $E_4 \subset V_4$ for the subspace of quartics vanishing
on~$\CK$. Note that we can test whether a given homogeneous polynomial in~$\vxi$
vanishes on~$\CK$ by pulling it back to~$\CW$ or to~$\BA^{15}$ and checking
whether it vanishes on~$\CV$.

We now determine the structure of~$E_4$ as a representation of~$\CJ[2]$
and we identify the space generated by~$\vd$ in $V_4/E_4$.

\begin{Lemma} \label{L:quartics} \strut
  \begin{enumerate}[\upshape(1)]
    \item The restriction of~$\rho_4$ to~$E_4$ splits as
          $\rho_4|_{E_4} = \chi_0^{\oplus 7} \oplus \bigoplus_{T \neq 0} \chi_T$.
    \item The images of $\delta_1, \ldots, \delta_8$ form a basis of the quotient
          $V_4^{\CJ[2]}/E_4^{\CJ[2]}$ of invariant subspaces.
  \end{enumerate}
\end{Lemma}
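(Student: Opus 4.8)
The plan is to exploit the representation-theoretic bookkeeping already set up, together with a dimension count that pins down the two remaining unknowns. For part~(1): by \eqref{eqn:rho4} we have $\rho_4 = \chi_0^{\oplus 15} \oplus \bigoplus_{T \neq 0} \chi_T^{\oplus 5}$, and since $E_4$ is a $\CJ[2]$-subrepresentation (the quartics vanishing on~$\CK$ form an invariant subspace, because $\CJ[2]$ acts on~$\CK$), its character is a sub-character of~$\chi_4$. So $\rho_4|_{E_4} = \chi_0^{\oplus a} \oplus \bigoplus_{T \neq 0} \chi_T^{\oplus b_T}$ for some $0 \le a \le 15$ and $0 \le b_T \le 5$. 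By Corollary~\ref{C:sym4}, $\dim E_4 = 330 - 260 = 70$. First I would argue that $b_T$ is the same for all $T \neq 0$: the $34$~quartics (beyond the quadric multiples) come in $\CJ[2]$-orbits, or more cleanly, one can invoke the known structure (from \cite{MuellerG3}) that the ``new'' quartics span a $34$-dimensional space on which $\CJ[2]$ acts, and combine this with the $36$-dimensional space of multiples of the invariant quadric~$y_0$. The multiples of~$y_0$ are $y_0 \cdot V_2$, and since $y_0$ spans~$\chi_0$, this subrepresentation is isomorphic to~$\rho_2 = \bigoplus_{T : \eps(T)=1} \chi_T$; all $36$ of these lie in~$E_4$ because $y_0$ vanishes on~$\CK$. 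That already contributes $\chi_0^{\oplus 1} \oplus \bigoplus_{T \neq 0, \eps(T)=1} \chi_T^{\oplus 1}$ to $E_4$ (here $\#\{T : \eps(T)=1\} = 36$, i.e.\ $35$ nontrivial plus the origin). The remaining $70 - 36 = 34$ dimensions of $E_4$ must then carry the remaining new quartics; matching this against $\chi_0^{\oplus (a-1)} \oplus \bigoplus_{T \neq 0} \chi_T^{\oplus (b_T - [\eps(T)=1])}$ and using that the total is $34$ with $a \le 15$, $b_T \le 5$, forces (after checking each $\chi_T$-isotypic piece separately, using that $E_4$ contains at least the $15$ minors of~$M$ which do not involve~$\xi_8$ and span a $\chi_0^{\oplus 15}$-type... wait, those are $15$ of the $\chi_0$ copies) the split $a = 7$ and $b_T = 1$ for all $T \neq 0$. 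I would double-check the $\chi_0$ count: $V_4^{\CJ[2]} = \chi_0$-isotypic component has dimension~$15$, and $E_4^{\CJ[2]}$ has dimension~$a$; the $15$ quartic minors of~$M$ together with the $7$ quartics $y_0 \xi_i^2$-type invariant combinations... actually the cleanest route is to verify $\dim E_4^{\CJ[2]} = 7$ directly via the explicit quartics and Lemma~\ref{L:quartics_indep}, marking it with a star, then note $7 + \sum_{T\neq 0} b_T = 70$ with each $b_T \le 5$ and the $\chi_T$-pieces of dimension $\le 5$ each; since there are $63$ nontrivial~$T$ and $70 - 7 = 63$, we get $b_T = 1$ for all.

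For part~(2): the space spanned by $\delta_1, \ldots, \delta_8$ maps, under $V_4 \to V_4/E_4 \cong L(8\Theta)^+$, to the $8$-dimensional space giving the composite $\CJ \to \CK \to \BP^7$. This composite is $\CJ[2]$-equivariant for the \emph{trivial} action on the source copy of~$\CJ$ (multiplication by~$2$ kills $\CJ[2]$) but for the standard translation action on the target~$\BP^7$; hence the $8$-dimensional image in $V_4/E_4$ is a copy of... no — I need invariance. The point is that $\delta \circ (\text{translation by }T) = \delta$ on~$\CK$ since $2(P+T) = 2P$. Therefore each $\delta_i \circ M_T$ is a linear combination of the $\delta_j$ modulo~$E_4$, i.e.\ the span $\langle \delta_1, \ldots, \delta_8\rangle$ in $V_4/E_4$ is $\CJ[2]$-stable. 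On the other hand this $8$-dimensional representation, pulled back through $\kappa$, is the standard representation~$\rho_1$ of the \emph{covering group}~$G$ (it is $|2\Theta|$), which does \emph{not} descend to~$\CJ[2]$ — but wait, in $V_4/E_4 \cong L(8\Theta)^+$ the group $\CJ[2]$ itself acts (not just~$G$), since $8\Theta$ descends. Hmm — so the span of $\vd$ in $V_4/E_4$ is an $8$-dimensional $\CJ[2]$-subrepresentation. The hard part will be showing it equals $V_4^{\CJ[2]}/E_4^{\CJ[2]}$, which has dimension $15 - 7 = 8$. I would argue: $\delta$ maps~$o$ to~$o$ and, more strongly, the duplication map is defined over the prime field and commutes with the $\CJ[2]$-translations only up to the permutation action on~$\CJ[2]$-translates of a point — but since we are looking at \emph{images} on~$\CK$ and $2T = 0$, each $\delta_i$ is genuinely $\CJ[2]$-invariant modulo $E_4$. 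Thus $\langle \delta_1,\ldots,\delta_8\rangle \subseteq V_4^{\CJ[2]} \bmod E_4^{\CJ[2]}$, and since the left side is $8$-dimensional (the $\delta_i$ are linearly independent modulo~$E_4$ because $\delta$ is a finite morphism, hence dominant, so the $8$ coordinate functions on the target are algebraically — in particular linearly — independent pulled back) and the right side is $8$-dimensional by part~(1), they coincide.

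The main obstacle, I expect, is establishing that $b_T = 1$ for every nontrivial~$T$ cleanly, rather than by a case check — equivalently, ruling out that $E_4$ has, say, two copies of some $\chi_T$ and zero copies of another. The dimension count $63 = \sum_{T \neq 0} b_T$ with $b_T \le 5$ does not by itself force uniformity. I would resolve this by noting that $\delta$ being $\CJ[2]$-equivariant in the appropriate sense (between the trivial action downstairs and the translation action upstairs on~$\BP^7$) means the $8$ quartics $\delta_i$ span, modulo~$E_4$, exactly the isotypic piece $\chi_0^{\oplus 8}$ of $V_4/E_4$ — but $V_4/E_4 \cong L(8\Theta)^+$ has $\chi_0$-multiplicity $15 - 7 = 8$ only if $a = 7$, closing the loop — and then the complementary computation on $E_4$ forces the uniform $b_T = 1$ once we know $E_4 \supseteq y_0 V_2$ (giving one copy of every $\chi_T$ with $\eps(T) = 1$, plus $\chi_0$) and $E_4$ contains no multiple of~$y_0$ beyond these $36$ while the remaining $34 = \dim E_4 - 36$ must then be distributed with $b_T - [\eps(T)=1] \ge 0$ summing to $34 = 63 - 35 + 6$; a short direct verification (starred) using the explicit list of $34$ quartics and the eigenspace decomposition of Section~\ref{S:action} confirms each remaining $b_T = 1$. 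Alternatively — and this is probably what the author does — one simply computes the action of~$\CJ[2]$ on the explicit $70$ quartics and reads off the decomposition directly, marking the claim with a star.
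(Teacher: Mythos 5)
Your part~(2) is essentially the paper's argument (the $\delta_j$ all transform by a common character modulo~$E_4$, are linearly independent there because $\CK$ spans~$\BP^7$ and $\delta$ is surjective, and the only eigenspace of dimension $\ge 8$ is the invariant one, of dimension $15-7=8$). The gap is in part~(1), and you correctly diagnose it yourself: knowing $\dim E_4 = 70$, $y_0 V_2 \subseteq E_4$ with $y_0 V_2 \cong \rho_2$, and even $\dim E_4^{\CJ[2]} = 7$, the remaining count $\sum_{T\neq 0} b_T = 63$ over $63$ nontrivial characters does \emph{not} force $b_T = 1$ for each~$T$, and your attempted repairs either go in circles or defer to an unperformed computation.

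The idea you are missing is Galois equivariance. For the generic curve, the absolute Galois group of the base field acts transitively on the $28$ odd $2$-torsion points and transitively on the $35$ nontrivial even ones; since $E_4$ is defined over the base field, it is Galois-stable, and Galois permutes the isotypic pieces $\chi_T$ compatibly with its permutation of the~$T$. Hence the multiplicity $b_T$ is constant on each of the two orbits, say $b_e$ on the nontrivial even points and $b_o$ on the odd ones. The numerology then closes by itself: $a + 35 b_e + 28 b_o = 70$ with $a \le 15$, $b_e, b_o \le 5$, and $a \ge 1$, $b_e \ge 1$ (both forced by $y_0 V_2 \subseteq E_4$) has the unique solution $(a, b_e, b_o) = (7,1,1)$ — note $b_e = 2$ forces $a = 0$, and $b_e = 1, b_o = 0$ forces $a = 35 > 15$. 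No explicit computation of the $\CJ[2]$-action on the $70$ quartics is needed, and $\dim E_4^{\CJ[2]} = 7$ comes out as a consequence rather than an input. (The case of a non-generic squarefree $F$ then follows by specialization, since the quartics are defined over $\Z[f_0,\ldots,f_8]$ and remain independent by Lemma~\ref{L:quartics_indep}.)
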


\begin{proof} \strut
  \begin{enumerate}[(1)]
    \item The dimension of~$E_4$ is~$70$ by Theorem~\ref{T:Kummerrel},
          and a subspace of dimension~$36$
          is given by $y_0 V_2$. The latter splits in the same way as~$\rho_2$ does.
          Since for the generic curve, the Galois action is transitive
          on the odd $2$-torsion points and on the nontrivial even
          $2$-torsion points, the multiplicities of all odd characters
          and those of all nontrivial even characters in~$\rho_4|_{E_4}$ have to
          agree. The only way to make the numbers come out correctly is as indicated.
    \item Since the result of duplicating a point is unchanged when a $2$-torsion
          point is added to it, the images of all~$\delta_j$ in~$V_4/E_4$
          must lie in the same eigenspace of the $\CJ[2]$-action. Since $K$ spans~$\BP^7$
          and the duplication map $\delta \colon K \to K$ is surjective, the images
          of the~$\delta_j$ in~$V_4/E_4$ must be linearly independent.
          So they must live in an eigenspace of dimension at least~eight.
          The only such eigenspace is that of the trivial character, which
          has dimension exactly $8 = 15 - 7$ by the first part. \qedhere
  \end{enumerate}
\end{proof}

We see that the 36 quartic forms $y_T^2$ for $T$ an even $2$-torsion point
are in the invariant subspace of~$V_4$ of dimension~15.
Let $\CT_\even$ denote the finite $k$-scheme
whose geometric points are the 36~even $2$-torsion points
(we can consider $\CT_\even$ as a subscheme of~$\CJ$ or of~$\CK$), and denote
by~$k_\even$ its coordinate ring; this is an \'etale $k$-algebra of dimension~36.
Then $y \colon T \mapsto y_T$ can be considered as a quadratic form with coefficients
in~$k_\even$ and $r \colon T \mapsto r(T)$ is an element of $k_\even^\times$.

\begin{Lemma} \label{L:etalg}
  The 36~coefficients $c_{ii} = [\xi_i^2]y$, for $1 \le i \le 8$,
  and $c_{ij} = \half [\xi_i \xi_j]y$,
  for $1 \le i < j \le 8$, constitute a $k$-basis of~$k_\even$.
\end{Lemma}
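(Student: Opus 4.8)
The plan is to show that the $36$ coefficients $c_{ii}$ and $c_{ij}$ span $k_\even$ as a $k$-vector space; since $\dim_k k_\even = 36$ and we have exactly $36$ coefficients, spanning is equivalent to linear independence, so it suffices to establish either one. The natural approach is to exhibit a $k$-linear map in the reverse direction that recovers each element of a known $k$-basis of $k_\even$ from the $c_{ij}$, or equivalently to compute a $36\times 36$ matrix relating the $c_{ij}$ to such a basis and check that it is invertible.

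First I would fix a concrete model of $k_\even$: the coordinate ring of the $k$-scheme $\CT_\even$ of $36$ even $2$-torsion points. Over $\bar k$ this algebra is just $\bar k^{36}$, indexed by the even $2$-torsion points $T$ (including $T=0$), and evaluation at the points $T$ gives the splitting. By Lemma~\ref{L:y} and Corollary~\ref{Cor:y}, the coefficients of $y_T$ are polynomials with $\Z$-coefficients in the coefficients of the factorizations $F=GH$, hence the $c_{ij}$ genuinely lie in $k_\even$ (they are Galois-equivariant families). So the statement is really that the $36$-dimensional $\bar k$-vector space $\bar k^{36}$ is spanned by the $36$ vectors $\bigl(c_{ij}(T)\bigr)_{T}$, where for each ordered index pair $(i,j)$ with $i\le j$ we set $c_{ij}(T) = [\xi_i^2]y_T$ if $i=j$ and $c_{ij}(T)=\half[\xi_i\xi_j]y_T$ if $i<j$, together with the corresponding values at $T=0$ coming from $y_0$ as in the remark after Corollary~\ref{Cor:y}.

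The cleanest route is then a direct computation, valid because everything is defined over $\Z[f_0,\ldots,f_8]$ and we only need the result generically (an \'etale algebra of the right dimension is determined by its generic behaviour, and linear independence over the generic point implies it over any field where $F$ stays squarefree): form the $36\times 36$ matrix whose rows are indexed by even $2$-torsion points $T$ and whose columns are indexed by the $36$ coefficient-positions $(i,j)$, with entry the coefficient $c_{ij}$ of $y_T$. Using the explicit matrices $M_T$ from Section~\ref{S:2tors} (equivalently the file \texttt{Kum3-torsionmats.magma}), one computes this matrix and verifies that its determinant is a nonzero element of $k$ — indeed, since the relevant specialization can be arranged to have determinant $\pm1$ as in Lemma~\ref{L:quartics_indep}, this is a star-marked check. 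Invertibility of this matrix is exactly the assertion that the $c_{ij}$ form a $k$-basis of $k_\even$.

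The main obstacle is not conceptual but organizational: one must pin down precisely which $36$ even $2$-torsion points (the $35$ ``genuine'' ones plus the origin) and which $36$ monomial positions are being paired, and confirm that the resulting square matrix really is invertible rather than merely of full rank over $\bar k$ at a single point. The point $T=0$ is handled by the note after Corollary~\ref{Cor:y}, where $y_0 = 2\xi_1\xi_8 + (\text{terms without }\xi_8)$, so its row is nonzero and not obviously in the span of the others; and because the entries are integral and the specialized matrix has determinant a unit, the conclusion holds in every characteristic $\neq 2$ with $F$ squarefree, so I would phrase the proof as a star-marked explicit linear-algebra verification rather than attempt a synthetic argument.
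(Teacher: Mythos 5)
The paper's proof is far slicker than yours: it simply observes that the elements
\[
\tilde{c}_{ii} = \frac{1}{8r}[\xi_{9-i}^2]y, \qquad
\tilde{c}_{ij} = \frac{\eps_i\eps_j}{8r}[\xi_{9-i}\xi_{9-j}]y
\]
satisfy $\Tr_{k_\even/k}(\tilde c_{ij}\,c_{i'j'}) = \delta_{(i,j),(i',j')}$, which is precisely a restatement of Lemma~\ref{L:x2iny} (comparing coefficients of $\xi_{i'}\xi_{j'}$ on both sides of the displayed identities there). The nondegeneracy of the trace pairing then gives linear independence at once, with no computation of a $36\times 36$ determinant and no casework. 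So the two approaches are genuinely different: the paper exhibits an explicit dual basis with respect to the trace form, whereas you propose to verify invertibility of the ``values at $T$'' matrix by brute force.

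Your approach would in principle work, but as written it has real gaps. First, the matrix $\bigl(c_{ij}(T)\bigr)$ has entries in $\bar k$, not $k$; its determinant is only well-defined up to the sign ambiguity coming from the choice of ordering of the $36$ points $T$, so the claim that it is ``a nonzero element of $k$'' is not correct as stated (its square is). Second, the appeal to Lemma~\ref{L:quartics_indep} is a non sequitur — that lemma concerns the independence of the $70$ quartics in $E_4$, not the coefficient matrix of the $y_T$, and it gives you no information about the determinant you need. Third, and most seriously, the inference ``a single specialization has determinant $\pm 1$, therefore the conclusion holds in every characteristic $\neq 2$ with $F$ squarefree'' is invalid: the determinant is a polynomial in the roots of $F$, and knowing one value is a unit tells you only that the polynomial is not identically zero, not that it is a unit (or even nonzero) at every squarefree specialization over every field. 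To rescue this route you would need to identify the determinant explicitly — e.g.\ show it is, up to sign, a power of $2$ times a power of $\disc(F)$ — and you neither state nor prove this. The paper's trace-pairing argument sidesteps all of these difficulties and is the approach you should internalize: once you notice that Lemma~\ref{L:x2iny} literally produces the dual basis, linear independence is immediate.
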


\begin{proof}
  We define further elements of $k_\even$ by
  \[ \tilde{c}_{ii} = \frac{1}{8 r} [\xi_{9-i}^2]y \qquad\text{and}\qquad
     \tilde{c}_{ij} = \frac{\eps_i \eps_j}{8 r} [\xi_{9-i} \xi_{9-j}]y .
  \]
  Lemma~\ref{L:x2iny} can be interpreted as saying that
  \[ \Tr_{k_\even/k} (\tilde{c}_{ij} c_{i'j'})
       = \begin{cases}
           1 & \text{if $(i,j) = (i',j')$,} \\
           0 & \text{otherwise.}
         \end{cases}
  \]
  This shows that the given elements are linearly independent over~$k$.
\end{proof}

We can compute the structure constants of~$k_\even$ with respect to this
basis and use this to express $y^2$ in terms of the basis again.
Extracting coefficients, we obtain 36~quartic forms with coefficients
in~$k$ that all lie in the 15-dimensional space of invariants under~$\CJ[2]$.
We check\star\ that they indeed span a space of this dimension and that we get
a subspace of dimension~7 of quartics vanishing on the Kummer variety.

\begin{figure}[t]
\hrulefill
\begin{align*}
  q_1  &= \xi_1 \xi_8^3
           + 2(-f_2 \xi_2 + f_3 \xi_3 - f_4 \xi_4 - f_4 \xi_5 + f_5 \xi_6 - f_6 \xi_7)
                \xi_1 \xi_8^2
           + \ldots \\
  q_2  &= \xi_2 \xi_8^3
           + (4 f_8 (-f_0 \xi_2 + f_2 \xi_4 + f_4 \xi_7)
               - 2 f_3 f_8 \xi_6 - f_5 f_7 \xi_7) \xi_1 \xi_8^2
           + \ldots \\
  q_3  &= \xi_3 \xi_8^3
           + (f_7 (-2 f_0 \xi_2 + 2 f_2 \xi_4 + f_3 \xi_6) \\
       & \hphantom{{}= \xi_3 \xi_8^3 + ({}}
              + 2 f_8 (-2 f_0 \xi_3 + 4 f_1 \xi_4 - 2 f_2 \xi_6 - f_3 \xi_7))
                \xi_1 \xi_8^2
           + \ldots \\
  q_4  &= \xi_4 \xi_8^3
           + (-2 f_0 f_7 \xi_3 + (12 f_0 f_8 + f_1 f_7) \xi_4 - 2 f_1 f_8 \xi_6)
                \xi_1 \xi_8^2
           + \ldots \\
  q_5  &= \xi_5 \xi_8^3
           + ((4 f_0 f_6 - 2 f_1 f_5) \xi_2
               + (-2 f_0 f_7 - 2 f_1 f_6 + 2 f_2 f_5) \xi_3 \\
       & \hphantom{{}= \xi_5 \xi_8^3 + ({}}
               + (4 f_0 f_8 + 4 f_1 f_7 + 4 f_2 f_6 - 5 f_3 f_5) \xi_4 \\
       & \hphantom{{}= \xi_5 \xi_8^3 + ({}}
               + (-2 f_1 f_8 - 2 f_2 f_7 + 2 f_3 f_6) \xi_6
               + (4 f_2 f_8 - 2 f_3 f_7) \xi_7) \xi_1 \xi_8^2
           + \ldots \\
  q_6  &= \xi_6 \xi_8^3
           + (f_0 (-2 f_5 \xi_2 - 4 f_6 \xi_3 + 8 f_7 \xi_4 - 4 f_8 \xi_6) \\
       & \hphantom{{}= \xi_6 \xi_8^3 + ({}}
               + f_1 (f_5 \xi_3 + 2 f_6 \xi_4 - 2 f_8 \xi_7)) \xi_1 \xi_8^2
           + \ldots \\
  q_7  &= \xi_7 \xi_8^3
           + (4 f_0 (f_4 \xi_2 + f_6 \xi_4 - f_8 \xi_7)
               - f_1 f_3 \xi_2 - 2 f_0 f_5 \xi_3) \xi_1 \xi_8^2
           + \ldots \\
  q_8  &= \xi_8^4
           + 16 (f_1 f_8 (f_1 \xi_2 - f_2 \xi_3 + f_3 \xi_4)
                  + f_0 f_7 (f_5 \xi_4 - f_6 \xi_6 + f_7 \xi_7)) \xi_1 \xi_8^2
           + \ldots \\[3mm]
  q_9  &= 2 (f_7 \xi_6 - 4 f_8 \xi_7) \xi_1 \xi_8^2 + \ldots \\
  q_{10} &= 2 (f_5 \xi_4 - f_6 \xi_6 + f_7 \xi_7) \xi_1 \xi_8^2 + \ldots \\
  q_{11} &= 2 (f_3 \xi_3 + 2 f_4 \xi_4 - 2 f_4 \xi_5 + f_5 \xi_6) \xi_1 \xi_8^2
              + \ldots \\
  q_{12} &= 2 (f_1 \xi_2 - f_2 \xi_3 + f_3 \xi_4) \xi_1 \xi_8^2 + \ldots \\
  q_{13} &= 2 (-4 f_0 \xi_2 + f_1 \xi_3) \xi_1 \xi_8^2 + \ldots \\
  q_{14} &= (3 \xi_4 - \xi_5) \xi_1 \xi_8^2 + \ldots \\
  q_{15} &= \xi_1^2 \xi_8^2 + \ldots
          = (\xi_1 \xi_8 - \xi_2 \xi_7 + \xi_3 \xi_6 - \xi_4 \xi_5)^2
\end{align*}
\caption{A basis of the $\CJ[2]$-invariant subspace of~$V_4$.} \label{Fig:qs}

\hrulefill
\end{figure}

It turns out\star\ that the quartics in $V_4^{\CJ[2]}$ that vanish on~$\CK$
are exactly those that do not contain terms cubic or quartic in~$\xi_8$.
Forms spanning the complementary space are uniquely determined
modulo $E_4^{\CJ[2]}$ by fixing the terms of higher degree in~$\xi_8$.
We take $q_j = \xi_j \xi_8^3 + (\deg_{\xi_8} \le 2)$ for $j = 1, \ldots, 8$.
Then the $q_j$ can be chosen so that they have coefficients in~$\Z[f_0,\ldots,f_8]$.
To fix~$q_j$ completely, it suffices to specify in addition the coefficients
of $\xi_1 \xi_i \xi_8^2$ for $1 \le i \le 7$. One possibility is to choose
them as given in Figure~\ref{Fig:qs}, which
includes $q_9, \ldots, q_{15}$ in the ideal of~$\CK$, where
$E_4^{\CJ[2]} = \langle q_9, q_{10}, \ldots, q_{15} \rangle$.
These quartics can be obtained from \texttt{Kum3-invariants.magma} at~\cite{Files}.

We can now identify the duplication map on~$\CK$.

\begin{Theorem} \label{Thm:delta}
  The polynomials
  \[ (\delta_1, \delta_2, \delta_3, \delta_4, \delta_5, \delta_6, \delta_7, \delta_8)
      = (4 q_1, 4 q_2, 4 q_3, 4 q_4, 4 q_5, 4 q_6, 4 q_7, q_8)
  \]
  in~$V_4^{\CJ[2]}$ (with $q_j$ as above) have the following properties.
  \begin{enumerate}[\upshape(1)]\addtolength{\itemsep}{1mm}
    \item $\delta_j \in \Z[f_0,f_1,\ldots,f_8][\xi_1,\xi_2,\ldots,\xi_8]$
          for all $1 \le j \le 8$.
    \item $(\delta_1, \delta_2, \ldots, \delta_8)(0,0,\ldots,0,1)
             = (0,0,\ldots,0,1)$.
    \item With $y_T$ as defined earlier for an even $2$-torsion point
          with image
          \[ (1 : \tau_2 : \tau_3 : \tau_4 : \tau_5 : \tau_6 : \tau_7 : \tau_8) \]
          on~$\CK$, we have that
          \[ y_T^2 \equiv \delta_8 - \tau_2 \delta_7 + \tau_3 \delta_6
                              - \tau_4 \delta_5 - \tau_5 \delta_4 + \tau_6 \delta_3
                              - \tau_7 \delta_2 + \tau_8 \delta_1
                   = \langle \vtau, \vd \rangle_S
                          \bmod E_4^{\CJ[2]} ,
          \]
          where $\vtau = (1, \tau_2, \ldots, \tau_8)$ and $\vd = (\delta_1, \ldots, \delta_8)$.
    \item The $\delta_j$ do not vanish simultaneously on~$\CK$.
    \item The map $\delta \colon \CK \to \CK$ given by $(\delta_1 : \ldots : \delta_8)$
          is the duplication map on~$\CK$.
  \end{enumerate}
\end{Theorem}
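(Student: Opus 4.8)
The plan is to establish the five properties more or less in the order they appear, using the representation-theoretic description of $V_4$ from Section~\ref{S:action} as the main tool, and then pinning down the remaining scalar ambiguity by an explicit normalization at the origin.

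First I would dispose of (1) and (2), which are bookkeeping. Property~(1) follows from the fact that the $q_j$ were constructed in Figure~\ref{Fig:qs} with coefficients in $\Z[f_0,\ldots,f_8]$, together with the explicit scaling by $4$ or $1$; property~(2) follows because each $q_j$ for $j \le 7$ has no term in $\xi_8^4$ or $\xi_8^3\xi_i$ with $i\ge 2$ other than the leading $\xi_j\xi_8^3$, and all the remaining monomials are divisible by some $\xi_i$ with $i \le 7$, so $q_j(0,\ldots,0,1) = 0$; and $q_8 = \xi_8^4 + \ldots$ evaluates to~$1$ at $(0,\ldots,0,1)$, again because all lower-degree-in-$\xi_8$ terms involve an $\xi_i$ with $i \le 7$.

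Next comes the substantive part, (3) and (5) together. The key point is Lemma~\ref{L:quartics}(2): the images $\bar\delta_1,\ldots,\bar\delta_8$ of the true duplication polynomials span the $8$-dimensional invariant quotient $V_4^{\CJ[2]}/E_4^{\CJ[2]}$. On the other hand, by Corollary~\ref{C:sym4} the duplication map is given by \emph{some} eight quartics, and composing $\kappa\circ(\cdot 2)$ as in the diagram, pulling back the $\CK$-embedding, shows these quartics lie in an $8$-dimensional subspace of $L(8\Theta)^+$ whose image in $V_4/E_4$ is the invariant eigenspace. Since any two choices of the $\delta_j$ agree modulo $E_4$, it suffices to identify the correct basis of $V_4^{\CJ[2]}/E_4^{\CJ[2]}$ and fix scaling. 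For the identification I would use property~(3) as the defining constraint: for each even $2$-torsion point $T$ with image $\vtau$, the form $y_T^2$ lies in the $15$-dimensional invariant subspace of $V_4$ (shown just before Figure~\ref{Fig:qs}), hence $y_T^2 \equiv \sum_j c_j(T)\,q_j \bmod E_4^{\CJ[2]}$ for scalars $c_j(T)$, and the claim is precisely that $c_j(T) = \eps_{9-j}\tau_{9-j}$ for $j\le 7$ and $c_8(T)=1$ (reading off $\langle\vtau,\vd\rangle_S$). This can be checked by a direct coefficient comparison: by Corollary~\ref{Cor:y}, $y_T = \xi_8^2 + 2\sum_{j\ge 2}\eps_j\tau_j\xi_{9-j}\xi_8 + (\text{no }\xi_8)$, so $y_T^2 = \xi_8^4 + 4\sum_j\eps_j\tau_j\xi_{9-j}\xi_8^3 + (\deg_{\xi_8}\le 2)$, and matching the $\xi_8^3$-terms against $q_8 = \xi_8^4+\ldots$ and $4q_j = 4\xi_j\xi_8^3+\ldots$ forces exactly the asserted combination --- \emph{provided} the terms of $\xi_8$-degree $\le 2$ in $y_T^2 - \langle\vtau,\vd\rangle_S$ lie in $E_4^{\CJ[2]}$, which holds because $E_4^{\CJ[2]} = \langle q_9,\ldots,q_{15}\rangle$ consists exactly of the invariant quartics with no term cubic or quartic in $\xi_8$ (the starred claim in the text). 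This is where I expect the real work to be, and it is the main obstacle: verifying that the construction of the $q_j$ via the structure constants of $k_\even$ (the computation described after Lemma~\ref{L:etalg}) actually produces, for every geometric $T$, the relation $y_T^2 \equiv \langle\vtau,\vd\rangle_S$; once that starred verification is in hand, (3) is immediate and (4) follows because the $35$ nontrivial $y_T$ together with $y_0$ do not have a common zero on $\CK$ (a point of $\CK$ on which all $y_T$ vanish would, by Corollary~\ref{C:sumyT2}, satisfy $\langle\vxi,\vz\rangle_S = 0$ identically, which is impossible), so via the identity in~(3) the $\delta_j$ cannot all vanish at any point of $\CK$.

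Finally, for (5): the eight quartics $(\delta_1:\ldots:\delta_8)$ define a morphism $\CK\to\BP^7$ (by~(4)) landing in $\CK$ (since each $\delta_j$ reduces mod $E_4$ to an element of the duplication subspace, and $E_4$ is the full ideal of quartics vanishing on $\CK$ by Theorem~\ref{T:Kummerrel}, the relations cutting out $\CK$ are preserved); hence it differs from the genuine duplication map $\delta$ only by an automorphism of $\BP^7$ fixing $\CK$ --- but the construction forces agreement in $V_4/E_4$, so the two morphisms literally coincide up to a global scalar, which is then killed by~(2), since both send $o$ to $o$ with the same last coordinate normalized to~$1$. This completes the proof.
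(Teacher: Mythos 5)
Your treatments of (1) through (4) track the paper's proof closely. For (3) you correctly reduce to comparing the coefficients of $\xi_j\xi_8^3$ via Corollary~\ref{Cor:y} and then invoking the (starred) characterization of $E_4^{\CJ[2]}$ as the $\CJ[2]$-invariant quartics of $\xi_8$-degree $\le 2$; that is exactly the paper's argument, and the additional worry you raise about the structure-constant computation is not an obstacle once the starred characterization is granted. For (4) you use Corollary~\ref{C:sumyT2} to go from the vanishing of all $y_T(\vxi)$ to $\langle\vxi,\vz\rangle_S = 0$ for all $\vz$, whereas the paper invokes Lemma~\ref{L:x2iny} directly; these are equivalent (the corollary is a consequence of the lemma), and your version is fine, though you tacitly use $8r(T)\neq 0$ in~$k$, which should be said explicitly.

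Part (5) has a genuine gap. Lemma~\ref{L:quartics}(2) tells you that the \emph{true} duplication forms span the $8$-dimensional quotient $V_4^{\CJ[2]}/E_4^{\CJ[2]}$, and your $\vd$ also spans that quotient by construction; but this only yields that the map $(\delta_1:\cdots:\delta_8)$ differs from the true duplication by an arbitrary automorphism $\alpha$ of $\BP^7$. Your claim that ``the construction forces agreement in $V_4/E_4$'' is unjustified: nothing in the construction identifies the two bases of the invariant quotient, and the normalization at~$o$ in~(2) only constrains the last column of $\alpha$, not the whole matrix. For the same reason, the parenthetical argument that the map lands in~$\CK$ (``since each $\delta_j$ reduces mod $E_4$ to an element of the duplication subspace'') is circular: lying in the right subspace does not force the image to satisfy the equations of~$\CK$ until you already know $\alpha(\CK)=\CK$. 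The paper closes this gap by an explicit computation: after reducing to the case $f_8=0$ with $A(x,1)=x(x-1)(x-a)$, it computes $2P$ via Cantor's algorithm and checks that $\kappa(2P)=\delta(\kappa(P))$ on a generic point, which pins $\alpha$ down to the identity. One could alternatively try to prove a priori that the true duplication forms satisfy $y_T^2 \equiv \langle\kappa(T),\vd^{\text{true}}\rangle_S \bmod E_4^{\CJ[2]}$ for all even~$T$ and then use that the $\kappa(T)$ span $\BP^7$, but you do not supply such an argument, and the paper does not take this route either.
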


\begin{proof} \strut
  \begin{enumerate}[(1)]
    \item This can be verified using the explicit polynomials.
    \item This is obvious.
    \item We compare the coefficients of $\xi_j \xi_8^3$ on both sides. Since
          by Corollary~\ref{Cor:y},
          \[ y_T = \xi_8^2 + 2 \eps_2 \tau_2 \xi_7 \xi_8 + 2 \eps_3 \tau_3 \xi_6 \xi_8
                     + \ldots + 2 \eps_8 \tau_8 \xi_1 \xi_8
                     + (\text{terms not involving~$\xi_8$}) ,
          \]
          we find that
          \[ y_T^2 = \xi_8^4 + 4 \eps_2 \tau_2 \xi_7 \xi_8^3 + \ldots
                             + 4 \eps_8 \tau_8 \xi_1 \xi_8^3
                             + (\text{terms of degree $\le 2$ in~$\xi_8$})
          \]
          and the right hand side has the same form. So the difference
          is a form in~$V_4^{\CJ[2]}$ of degree at most~2 in~$\xi_8$, which
          implies that it is in~$E_4^{\CJ[2]}$.
    \item Let $\vxi \in k^8 \setminus \{0\}$ be coordinates of a point in~$\CK$.
          Then $\vd(\vxi) = 0$
          implies by~(3) that $y_T(\xi) = 0$ for all even $2$-torsion points~$T$
          (note that $y_0$ vanishes on all of~$\CK$). Lemma~\ref{L:x2iny} then
          shows that $\vxi = 0$ as well, since $8 r(T) \neq 0$ in~$k$.
          This contradicts our choice of~$\vxi$.
    \item By~(4), $\delta$ is a morphism $\CK \to \BP^7$, and by
          Lemma~\ref{L:quartics}~(2) $\delta$ differs from the duplication
          map by post-composing with an automorphism~$\alpha$ of~$\BP^7$.
          We show\star\ that on a generic point, $\delta$ coincides with the duplication
          map; this proves that $\alpha$ is the identity.
          We use the action of~$\GL(2)$ on $(x,z)$ (and scaling on~$y$) to reduce to the case
          that $F(x,1)$ is monic of degree~$7$. A generic point~$P$ on~$\CJ$
          can then be represented by $(A,B,C)$ such that $A(x,1)$ is monic
          of degree~$3$ and squarefree and $B(x,1)$ is of degree $\le 2$.
          After making a further affine transformation, we can assume that
          $A(x,1) = x(x-1)(x-a)$ for some $a \in k$. The corresponding point on~$\CK$
          is then
          \begin{align*}
             \kappa(P) = (1 &: -a-1 : a : 0 : -a c_3 - c_1 : -c_0 \\
                            &: (a+1) c_0 + 2 b_0 b_2 : -(a^2+a+1) c_0 - 2(a+1) b_0 b_2) ,
          \end{align*}
          where $B(x,1) = b_0 + b_1 x + b_2 x^2$,
          $C(x,1) = c_0 + c_1 x + c_2 x^2 + c_3 x^3 - x^4$.
          We compute~$2P$ in terms of its Mumford representation using
          Cantor's algorithm as implemented in Magma and find~$\kappa(2P)$.
          On the other hand, we compute~$\delta(\kappa(P))$. Both points
          are equal, which proves the claim.
    \qedhere
  \end{enumerate}
\end{proof}
The quartics $\vd = (\delta_1, \ldots, \delta_8)$
are given in the file \texttt{Kum3-deltas.magma} at~\cite{Files}.

The canonical map from $V_2 = \Sym^2 L(2 \Theta)$ to $L(4 \Theta)$ has non-trivial
one-di\-men\-sio\-nal kernel, spanned by the quadric~$y_0$ vanishing on~$\CK$. Since
the dimension of the even part~$L(4 \Theta)^+$ of~$L(4 \Theta)$ is $36 = \dim V_2$,
the map $V_2 \to L(4 \Theta)^+$ has a one-dimensional cokernel. Looking at the
action of~$\CJ[2]$ on~$L(4 \Theta)^+$, it is clear that this space splits as
a direct sum of the image of~$V_2$ and a one-dimensional invariant subspace.
We will identify a generator of the latter.

\begin{Lemma}
  The image of $q_1$ in $L(8 \Theta)$ is the square of an element
  $\Xi \in L(4 \Theta)^+$ that is invariant under the action of~$\CJ[2]$.
\end{Lemma}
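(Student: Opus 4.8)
The plan is to realize~$\Xi$ as a scalar multiple of the pullback under the duplication map of the section cutting out the theta divisor, to deduce the three assertions formally, and to use the explicit polynomials only to pin down the scalar and the field of definition.

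Let $\theta$ be a section of $\CO_\CJ(\Theta)$ with divisor~$\Theta$. It is even, since $\dim L(\Theta)^+ = (1^3+1)/2 = 1 = \dim L(\Theta)$. Hence $\CO_\CJ(\Theta)$ carries a $\{\pm1\}$-equivariant structure whose square is the one on $\CO_\CJ(2\Theta) = \kappa^*\CO_\CK(1)$, so it descends to a sheaf~$\CL$ on the Kummer variety~$\CK$ with $\CL^{\otimes2} = \CO_\CK(1)$, and $\theta$ descends to a section~$s$ of~$\CL$ with $\operatorname{div}_\CK(s) = \kappa(\Theta)$. As already noted in Section~\ref{S:coords}, $\CK \cap \{\xi_1 = 0\} = 2\kappa(\Theta)$ as divisors, so $\xi_1|_\CK$ and~$s^2$ are sections of $\CO_\CK(1)$ with the same divisor; rescaling~$\theta$, we may assume $\xi_1|_\CK = s^2$.

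Now pull back along the duplication morphism $\delta\colon\CK\to\CK$ (a genuine morphism given by the quartics~$\vd$, by Theorem~\ref{Thm:delta}(4)--(5), and satisfying $\delta^*\CO_\CK(1) = \CO_\CK(4)$, hence $\delta^*\CL = \CO_\CK(2)$, since pulling back the class of~$\kappa(\Theta)$ under duplication multiplies it by~$4$). Set $\Xi \mathrel{:=} \tfrac12\,\delta^*s \in H^0(\CK,\CO_\CK(2)) = L(4\Theta)^+$. Then
\[ 4\,\Xi^2 = (\delta^*s)^2 = \delta^*(s^2) = \delta^*(\xi_1|_\CK) = \delta_1|_\CK = 4\,q_1|_\CK \]
by Theorem~\ref{Thm:delta}, so the image of~$q_1$ in $L(8\Theta)$ is~$\Xi^2$; and $\Xi\in L(4\Theta)^+$ because it is pulled back from~$\CK$. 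Finally $\Xi$ is $\CJ[2]$-invariant: for $T\in\CJ[2]$ one has $[2]\circ t_T = [2]$ on~$\CJ$ (as $2T = 0$), so the induced automorphism~$\bar t_T$ of~$\CK$ satisfies $\delta\circ\bar t_T = \delta$, whence $\bar t_T^*(\delta^*s) = \delta^*s$.

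Two points still need attention, the second being where the real work lies. First, one must know that $\Xi$ is genuinely the \emph{missing} generator of~$L(4\Theta)^+$ and not a quadratic form in the~$\xi_i$: such a form, being $\CJ[2]$-invariant, would be a multiple of the unique invariant quadric~$y_0$, which vanishes on~$\CK$, contradicting $\Xi^2 = q_1 \neq 0$ in~$L(8\Theta)$. Second, the construction above is naturally carried out over~$\bar k$ (the normalization $s^2 = \xi_1|_\CK$ fixes~$\theta$ only up to a scalar in~$\bar k^\times$), so to conclude that~$\Xi$ is defined over~$k$ and that the scalar works out exactly, I would fall back on the explicit description: exhibit~$\Xi$ as a $\pm\Gamma$-invariant polynomial of degree~$4$ in the $a_i,b_i,c_i$ — equivalently a polynomial in the~$\eta_{ij}$, of weight~$12$, with coefficients in~$\Z[f_0,\dots,f_8]$ — spanning the one-dimensional cokernel of $\Sym^2 L(2\Theta)\to L(4\Theta)^+$, and check on~$\CV$ that $\Xi^2$ and~$q_1$ pull back to the same polynomial. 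The obstacle is locating this~$\Xi$: unlike $\xi_8 = \eta$, it is not a short expression in the~$\eta_{ij}$, so one must identify it inside the invariant ring of~$\pm\Gamma = \O(Q)$ acting on the $3\times5$ matrix~$L$, modulo the relations defining~$\CV$. Its weight, its $\CJ[2]$-invariance, and its independence from the quadratic monomials in the~$\xi_i$ pin it down up to scaling and make the search finite; once a candidate is in hand, the remaining verifications are routine.
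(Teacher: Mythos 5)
Your conceptual frame --- realizing $\Xi$ as essentially a pullback of the theta section along duplication --- is genuinely different from the paper's proof, which is purely computational: the paper pulls $q_1$ back to the parameter space~$\BA^{15}$, observes that the result is the square of a polynomial~$p$, and notes that $p$ is a quadratic in the components of $A_l \wedge B_l \wedge C_l$ (the minors~$\mu_{ijk}$ of~$L$), hence $\pm\Gamma$-invariant, hence defines an element of~$L(4\Theta)^+$. Your conceptual argument also supplies a clean reason for the $\CJ[2]$-invariance (from $[2]\circ t_T = [2]$), which the paper's proof of this lemma does not spell out. And you correctly anticipate that the explicit verification over~$\CV$ is where the real work lies and that one has to fall back on it to pin down the scalar and the field of definition; that fall-back is essentially the paper's proof.

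The conceptual overlay, however, has a genuine gap at the descent step. You assert that $\CO_\CJ(\Theta)$ descends to a sheaf~$\CL$ on~$\CK$ with $\CL^{\otimes 2} \cong \CO_\CK(1)$ and then tensor and pull back~$\CL$ as if it were invertible. But $\CK$ is singular at the $64$ images of the $2$-torsion points, and $\kappa(\Theta)$ is a Weil divisor that is \emph{not} Cartier there: the $[-1]$-equivariant structure on~$\CO_\CJ(\Theta)$ induced by the even section~$\theta$ acts on the fiber at a $2$-torsion point~$T$ by the sign determined by the parity of the associated theta characteristic, so it is~$-1$ at each of the $28$~odd~$T$, and $\CO_\CJ(\Theta)$ does not descend as a line bundle (just as, in genus~$2$, the degree-$3$ curve $\kappa(\Theta)$ is not Cartier on the nodal Kummer quartic). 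The assertions $\CL^{\otimes 2} = \CO_\CK(1)$ and $\delta^*\CL = \CO_\CK(2)$ therefore don't make sense as stated. The fix is to stay on~$\CJ$: set $\Xi := [2]^*\theta$, a section of $[2]^*\CO_\CJ(\Theta)\cong\CO_\CJ(4\Theta)$ which is even because $[2]$ commutes with~$[-1]$; then $\Xi^2 = [2]^*(\theta^2)$ and $\theta^2$ is (up to scalar) the section~$\xi_1$ of~$\CO_\CJ(2\Theta)$, so $\Xi^2$ is (up to the same scalar ambiguity) the pullback of~$\xi_1$ under duplication, i.e.\ $\delta_1\circ\kappa = 4 q_1$ in~$L(8\Theta)$. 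This repairs the geometry while preserving your $\CJ[2]$-invariance argument, but, as you say, the normalization and the $k$-rationality still require the explicit computation. One small inaccuracy in your description of that computation: the square root~$p$ that the paper finds is a quadratic in the ten~$\mu_{ijk}$ and so has degree~$6$, not~$4$, in the $a_i,b_i,c_i$ (degree~$3$ in the~$\eta_{ij}$); it lies in $L(4\Theta)^+$ only because $p^2 = q_1 \in L(8\Theta)$ forces the pole order along~$\Theta$ down to~$4$, not because $p$ is a degree-$4$ polynomial.
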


\begin{proof}
  We pull back $q_1$ to a polynomial
  function on the affine space~$\BA^{15}$ that parameterizes the triples
  of polynomials $(A,B,C)$. We find\star\ that this polynomial is the square
  of some other polynomial~$p$ that can be written as a quadratic in the
  components of $A_l \wedge B_l \wedge C_l$.
  So $p$ is invariant under~$\pm\Gamma$, which means that it gives an
  element~$\Xi$ of~$L(4 \Theta)^+$.
\end{proof}

To make~$\Xi$ more explicit, we note that $p$ can be expressed
as a cubic in the~$\xi_j$. Taking into account that $\xi_1 = 1$
on the affine space, we find that (up to the choice of a sign)
\begin{align*}
  \xi_1 \Xi
    &= (-8 f_0 f_4 f_8 + 2 f_0 f_5 f_7 + 2 f_1 f_3 f_8) \xi_1^3
        - 4 f_0 f_6 \xi_1^2 \xi_2
        + (-4 f_0 f_7 + 2 f_1 f_6) \xi_1^2 \xi_3 \\
    &\qquad{}
        + (-4 f_0 f_8 + 2 f_1 f_7 - 4 f_2 f_6 + f_3 f_5) \xi_1^2 \xi_4
        + (12 f_0 f_8 - f_1 f_7) \xi_1^2 \xi_5 \\
    &\qquad{}
        + (-4 f_1 f_8 + 2 f_2 f_7) \xi_1^2 \xi_6
        - 4 f_2 f_8 \xi_1^2 \xi_7
        + 6 f_0 \xi_1 \xi_2^2 - 3 f_1 \xi_1 \xi_2 \xi_3
        + 6 f_2 \xi_1 \xi_2 \xi_4 \\
    &\qquad{}
        - f_3 \xi_1 \xi_2 \xi_6
        - 2 f_3 \xi_1 \xi_3 \xi_4
        + 2 f_4 \xi_1 \xi_3 \xi_6
        - f_5 \xi_1 \xi_3 \xi_7
        + 4 f_4 \xi_1 \xi_4^2
        - 2 f_4 \xi_1 \xi_4 \xi_5 \\
    &\qquad{}
        - 2 f_5 \xi_1 \xi_4 \xi_6
        + 6 f_6 \xi_1 \xi_4 \xi_7
        - 3 f_7 \xi_1 \xi_6 \xi_7
        + 6 f_8 \xi_1 \xi_7^2
        - 11 \xi_2 \xi_4 \xi_7
        + \xi_2 \xi_5 \xi_7
        + 2 \xi_2 \xi_6^2 \\
    &\qquad{}
        + 2 \xi_3^2 \xi_7
        + 5 \xi_3 \xi_4 \xi_6
        - 3 \xi_3 \xi_5 \xi_6
        + 2 \xi_4^3
        - 7 \xi_4^2 \xi_5
        + 3 \xi_4 \xi_5^2 .
\end{align*}
We obtain similar cubic expressions for $\xi_j \Xi$ with
$j \in \{2,3,\ldots,8\}$ by multiplying the polynomial above
by~$\xi_j$, then adding a suitable linear combination of the
quartics vanishing on~$\CK$ so that we obtain something that
is divisible by~$\xi_1$.
These cubics are given in the file \texttt{Kum3-Xipols.magma} at~\cite{Files}.
With this information, we can evaluate~$\Xi$ on any given set~$\vxi$
of coordinates of a point on~$\CK$: we find an index~$j$ with $\xi_j \neq 0$
and evaluate $\Xi$ as $(\xi_j \Xi)/\xi_j$.

This gives us a basis of $L(4 \Theta)^+$ consisting of~$\Xi$ and the quadratic
monomials in the~$\xi_j$ minus one of the monomials $\xi_j \xi_{9-j}$.
Alternatively, we can use the basis consisting of~$\Xi$ and the~$y_T$ for
the 35 nonzero even $2$-torsion points~$T$.


\section{Sum and difference on the Kummer variety} \label{S:sumdiff}

In this section, $k$ continues to be a field of characteristic~$\neq 2$ and
$F$ to be squarefree.

We consider the composition
\[ \CJ \times \CJ \stackrel{(+,-)}{\To} \CJ \times \CJ
     \stackrel{(\kappa,\kappa)}{\To} \CK \times \CK \To \BP^7 \times \BP^7
     \stackrel{\text{Segre}}{\To} \BP^{63}
     \stackrel{\text{symm.}}{\To} \BP^{35}
\]
where `symm.' is the symmetrization map that sends a matrix~$A$
to $A + A^\top$ and we identify the Segre map with the multiplication map
\[ \text{(column vectors)} \times \text{(row vectors)} \To \text{matrices}. \]
Pulling back hyperplanes to $\CJ \times \CJ$, we see that the map is given by sections
of $4 \pr_1^* \Theta + 4 \pr_2^* \Theta$, hence symmetric bilinear forms
on~$L(4 \Theta)$. The map is invariant under
negation of either one of the arguments, therefore the bilinear forms
only involve even sections. The map can be described by a symmetric
matrix~$B$ of such bilinear forms such that in terms of coordinates $(w_j)$
and~$(z_j)$ of the images $\kappa(P+Q)$ and~$\kappa(P-Q)$ of $P \pm Q$
on~$\CK$, we have (up to scaling) $w_i z_j + w_j z_i = 2 B_{ij}(\kappa(P), \kappa(Q))$.
We normalize by requiring that $B_{88}(o,o) = 1$, where $o = (0,\ldots,0,1)$.

We write $\tilde{V}_2$ for~$L(4 \Theta)^+$; then $B$ can be interpreted
as an element~$\beta$ of $\tilde{V}_2 \otimes \tilde{V}_2 \otimes V_2^*$.
The last factor $V_2^*$ is identified with the space of symmetric $8 \times 8$
matrices (whose entries are thought of representing $\half(w_i z_j + w_j z_i)$
for coordinates $\underline{w}$ and~$\underline{z}$ of points in~$\BP^7$)
by specifying that a quadratic form $q \in V_2$ evaluates on such a matrix
to $b(\underline{w}, \underline{z})$ where $b$ is the bilinear form such that
$q(\vx) = b(\vx, \vx)$. If $M$ is the matrix of~$b$ and $B$ is the matrix
corresponding to the unordered pair $\{\underline{w}, \underline{z}\}$, then
the pairing is $\Tr(M^\top B) = 8 \langle M, B \rangle$. Put differently,
we obtain the $(i,j)$-entry of the matrix by evaluating at the quadratic
form~$\xi_i \xi_j$.

The $2$-torsion group~$\CJ[2]$ acts on each factor, and $\beta$ must be invariant
under the action of $\CJ[2] \times \CJ[2]$ such that $(T, T')$ acts via
$(T, T', T+T')$ on the three factors (shifting $P$ by~$T$ and $Q$ by~$T'$
shifts $P \pm Q$ by~$T+T'$).

We use the basis of~$\tilde{V}_2$ given by $\Xi$
and $y_T$ for the nonzero even $2$-torsion points~$T$ (suitably extending~$k$
if necessary); for $V_2^*$ we
use the basis dual to $(y_T)_{\text{$T$ even}}$, which is given by the linear forms
\[ y_T^* \colon v \longmapsto \frac{1}{r(T)} \langle z_T, v \rangle . \]
If $T_1, T_2, T_3$ are even $2$-torsion points, then the effect of $(T,T')$
acting on the corresponding basis element of the triple tensor product is
to multiply it by
\[ e_2(T, T_1) e_2(T', T_2) e_2(T+T', T_3) = e_2(T, T_1 + T_3) e_2(T', T_2 + T_3) . \]
If this basis element occurs in~$\beta$ with a nonzero coefficient, then
this factor must be~1 for all $T, T'$, which means that $T_1 = T_2 = T_3$.
This shows that
\[ \beta = \sum_{T \neq 0} a_T (y_T \otimes y_T \otimes y_T^*)
              + a_0 (\Xi \otimes \Xi \otimes y_0^*) .
\]
If we evaluate at the origin in the first component, we obtain (using that
$\Xi$ vanishes there and that $y_T(o) = 1$ for $T \neq 0$ even) that
\[ \beta_o = \sum_{T \neq 0} a_T (y_T \otimes y_T^*) . \]
This corresponds to taking $P = O$, resulting in the pair $\pm Q$ leading
to $\{\kappa(Q), \kappa(Q)\}$. So, taking $\vxi$ as coordinates of~$Q$
and using that $B_{88}(o, o) = 1$,
the $(i,j)$-component of this expression, evaluated at $\vxi$ in the
(now) first component of~$\beta_o$,
must be~$\xi_i \xi_j$, up to a multiple of~$y_0$:
\[ \xi_i \xi_j \equiv \sum_{T \neq 0} a_T y_T^*(\xi_i \xi_j) \cdot y_T \bmod y_0 . \]
In other words, $\beta_o$, interpreted as a linear map $V_2 \to \tilde{V}_2$,
is the canonical map; in particular, it sends $y_T$ to~$y_T$ for all even $T \neq 0$,
and so $a_T = 1$ for all $T \neq 0$. It only remains to find~$a_0$; then $\beta$
is completely determined. We consider the image of~$\beta$
in $\Sym^2 \tilde{V}_2 \otimes V_2^*$,
which corresponds to taking $P = Q$. This results in the unordered pair $\{2P, O\}$,
represented (according to our normalization) by the symmetric matrix that is zero everywhere
except in the last row and column, where it has entries
$\half\delta_1, \ldots, \half\delta_7, \delta_8$. We obtain (recall that $\Xi^2 = q_1$
and $\delta_1 = 4 q_1$) that
\[ \sum_{T \neq 0} y_T^2 \otimes y_T^*(\xi_i \xi_j) + a_0 q_1 \otimes y_0^*(\xi_i \xi_j)
     = \begin{cases}
          0              & \text{if $i, j < 8$;} \\
          \half \delta_i & \text{if $i < j = 8$;} \\
          \delta_8       & \text{if $i = j = 8$}.
       \end{cases}
\]
Evaluating at~$y_0 = 2(\xi_1 \xi_8 - \xi_2 \xi_7 + \xi_3 \xi_6 - \xi_4 \xi_5)$, we find that
\[ a_0 q_1 = \delta_1 = 4 q_1 . \]
This shows that $a_0 = 4$.
(Note that if we evaluate at~$y_T$, we recover the relation
\[ y_T^2 = \sum_{j=1}^7 \tfrac{1}{2} \delta_j \cdot [\xi_j \xi_8] y_T
             + \delta_8 \cdot [\xi_8^2] y_T
         = \sum_{j=1}^7 \eps_{9-j} \tau_{9-j} \delta_{j} + \delta_8 .)
\]
We have shown:

\begin{Lemma}
  The element $\beta \in \tilde{V}_2 \otimes \tilde{V}_2 \otimes V_2^*$ is given by
  \[ \beta = \sum_{T \neq 0} y_T \otimes y_T \otimes y_T^*
              + 4\, \Xi \otimes \Xi \otimes y_0^* .
  \]
  In terms of matrices, we have that
  \begin{equation} \label {E:B}
    2 B(\vxi, \vz)
      = \sum_{T \neq 0} \frac{y_T(\vxi) y_T(\vz)}{4 r(T)} M_T S
          + \Xi(\vxi) \Xi(\vz) S .
  \end{equation}
\end{Lemma}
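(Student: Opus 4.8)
The plan is to determine the single remaining coefficient $a_0$ in the already-established decomposition
\[
  \beta = \sum_{T \neq 0} y_T \otimes y_T \otimes y_T^{*} + a_0\,\Xi \otimes \Xi \otimes y_0^{*}
\]
and then translate the tensor identity into the matrix identity~\eqref{E:B}. The structural shape of $\beta$ — that only the "diagonal" terms $y_{T_1}\otimes y_{T_2}\otimes y_{T_3}^{*}$ with $T_1=T_2=T_3$ survive — comes purely from the $\CJ[2]\times\CJ[2]$-equivariance, using the Weil-pairing computation $e_2(T,T_1)e_2(T',T_2)e_2(T+T',T_3)=1$ for all $T,T'$ forcing $T_1=T_2=T_3$, and from the fact (Lemma on the splitting of $\tilde V_2$, together with $\dim\tilde V_2 = 36$) that $\tilde V_2$ has basis $\{\Xi\}\cup\{y_T : T\neq 0 \text{ even}\}$ on which $\CJ[2]$ acts by the characters $\chi_0,\chi_T$ respectively, matching the character list of $V_2^{*}$.

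First I would evaluate $\beta$ at the origin $o=(0,\dots,0,1)$ in the first tensor factor. Since $\Xi(o)=0$ and $y_T(o)=1$ for every nonzero even $T$ (by Corollary~\ref{Cor:y}, the coefficient of $\xi_8^2$ in $y_T$ is $1$), this kills the $a_0$-term and yields $\beta_o = \sum_{T\neq 0} y_T\otimes y_T^{*}$. Geometrically this is the case $P=O$, where the unordered pair $\{P+Q,\,P-Q\}$ degenerates to $\{Q,Q\}$, so the $(i,j)$-entry of the resulting matrix is $\xi_i\xi_j$ up to a multiple of $y_0$ (using the normalization $B_{88}(o,o)=1$). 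Hence $\beta_o$, read as a linear map $V_2\to\tilde V_2$, is exactly the canonical quotient map $V_2\to L(4\Theta)^{+}$; in particular it sends $y_T$ to $y_T$, which forces the coefficient of $y_T\otimes y_T\otimes y_T^{*}$ to be $1$, as already recorded. Then, to pin down $a_0$, I would pass to the symmetric image in $\Sym^2\tilde V_2\otimes V_2^{*}$, i.e. set $P=Q$. The unordered pair becomes $\{2P,\,O\}$, whose representing symmetric matrix (by our normalization) has all entries zero except in the last row and column, with entries $\tfrac12\delta_1,\dots,\tfrac12\delta_7,\delta_8$. On the other side, $y_T^2$ restricted to the last row/column gives, by Theorem~\ref{Thm:delta}(3), exactly $\langle\vtau,\vd\rangle_S$, so the nonzero-$T$ terms already account for the entire last-row/column data $\tfrac12\delta_j$, $\delta_8$; the $a_0$-term must therefore vanish \emph{on that part} but can still contribute through $y_0$. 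Evaluating the identity at the quadratic form $y_0 = 2(\xi_1\xi_8-\xi_2\xi_7+\xi_3\xi_6-\xi_4\xi_5)$ — on which $y_T^{*}$ vanishes for $T\neq 0$ (orthogonality, Lemma~\ref{L:ortho}: $\langle z_T,y_0\rangle = \langle z_T,y_{0}\rangle$ is $0$ for $T\neq 0$ since $y_0$ lies in the $T=0$ eigenspace) while $y_0^{*}(y_0)=1$ — isolates $a_0\,q_1 = \delta_1 = 4q_1$, giving $a_0=4$. Here I use $\Xi^2=q_1$ and $\delta_1=4q_1$ from the preceding lemma and Theorem~\ref{Thm:delta}.

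Finally I would unwind the tensor statement into the matrix form~\eqref{E:B}. Under the identification of $V_2^{*}$ with symmetric matrices in which a quadratic form $q$ with matrix $M$ pairs with a symmetric matrix $B$ via $\Tr(M^{\top}B)=8\langle M,B\rangle$, the dual functional $y_T^{*}$ corresponds to the matrix $\tfrac{1}{r(T)}(S M_T)$ rescaled appropriately, since $y_T^{*}(v) = \tfrac{1}{r(T)}\langle z_T,v\rangle$ and $z_T$ has matrix $M_T S$; tracing constants, the basis element $y_T\otimes y_T\otimes y_T^{*}$ contributes the term $\dfrac{y_T(\vxi)\,y_T(\vz)}{4r(T)}\,M_T S$ to $2B(\vxi,\vz)$, and $\Xi\otimes\Xi\otimes y_0^{*}$ (with coefficient $a_0=4$) contributes $\Xi(\vxi)\Xi(\vz)\,S$, because $y_0^{*}$ corresponds to $S$ (up to the universal normalization fixed by $B_{88}(o,o)=1$). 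Summing over all nonzero $T$ and adding the $\Xi$-term gives precisely~\eqref{E:B}.

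The main obstacle I anticipate is bookkeeping the normalization constants: the factors of $2$ in the symmetrization map $A\mapsto A+A^{\top}$, the factor $\tfrac18$ in the trace pairing, the relations $M_T^2 = r(T)I_8$ and $(SM_T)^{\top}=\eps(T)SM_T$, and the precise meaning of "$B_{ij}$" versus "$2B_{ij} = w_iz_j + w_jz_i$" all have to be reconciled so that the coefficient of each $M_T S$ comes out as $\tfrac{1}{4r(T)}$ and the $\Xi$-term comes out with coefficient exactly $1$ after the global factor of $2$ on the left. None of the individual steps is deep, but one sign or factor-of-two slip propagates, so the care lies in carrying the identifications $V_2^{*}\cong\Sym^2$ and $\tilde V_2 = L(4\Theta)^{+}$ consistently throughout, particularly in the evaluation-at-$y_0$ step where the orthogonality relations of Lemma~\ref{L:ortho} are used in the form $\langle z_T, y_{T'}\rangle = r(T)$ if $T=T'$ and $0$ otherwise.
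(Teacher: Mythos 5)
Your proposal is correct and follows essentially the same route as the paper: equivariance under $\CJ[2]\times\CJ[2]$ to reduce to the diagonal terms, evaluation at the origin to get $a_T=1$, specialization to $P=Q$ and evaluation at $y_0$ to extract $a_0 q_1=\delta_1=4q_1$, and the identification $y_T^*\leftrightarrow \frac{1}{8r(T)}M_TS$ to pass to the matrix form. The only caveat is the normalization bookkeeping you yourself flag, which works out exactly as in the paper.
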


To get the expression for~$B$, note that $y_T^*$ corresponds to the matrix
\[ \bigl(y_T^*(\xi_i \xi_j)\bigr)_{i,j}
     = \frac{1}{r(T)} \bigl(\langle z_T, \xi_i \xi_j \rangle\bigr)_{i,j}
     = \frac{1}{8 r(T)} M_T S .
\]
The resulting matrix of bi-quadratic forms corresponding to the first summand
in~\eqref{E:B} has entries that can
be written as elements of $\Z[f_0,\ldots,f_8][\vxi,\vz]$.
The entries are given in the file \texttt{Kum3-biquforms.magma} at~\cite{Files}.
More precisely, let
\[ q = \xi_1 (f_3 f_5 \xi_4 + f_1 f_7 \xi_5)
        + f_1 \xi_2 \xi_3 + f_3 \xi_2 \xi_6 + f_5 \xi_3 \xi_7 + f_7 \xi_6 \xi_7
        + (\xi_4 + \xi_5) \xi_8 ,
\]
then the entries of
\[ B(\vxi, \vz) - \tfrac{1}{2} \bigl(q(\vxi) q(\vz) + \Xi(\vxi) \Xi(\vz)\bigr) S \]
are (up to addition of multiples of $y_0(\vxi)$ and $y_0(\vz)$)
in $\Z[f_0,\ldots,f_8][\vxi,\vz]$. (Note that $q \equiv \Xi \bmod (2, y_0)$
so that the term in parentheses is divisible by~2.)

We can now use the matrix~$B$ to perform `pseudo-addition' on~$\CK$ in complete
analogy to the case of genus~$2$ described in~\cite{FlynnSmart}. This means
that given $\kappa(P)$, $\kappa(Q)$ and~$\kappa(P-Q)$, we can find~$\kappa(P+Q)$.
This in turn can be used to compute multiples of points on~$\CK$ by a variant
of the usual divide-and-conquer scheme (`repeated squaring').

We can make the upper left entry of~$B$ completely explicit.

\begin{Lemma}
  Recall that $\langle \cdot, \cdot \rangle_S$ denotes the bilinear form corresponding
  to the matrix~$S$. We have that
  \[ B_{11}(\vxi,\vz) \equiv \langle \vxi, \vz \rangle_S^2 \bmod (y_0(\vxi), y_0(\vz)) . \]
\end{Lemma}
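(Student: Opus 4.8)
The plan is to read off the $(1,1)$-entry from the explicit matrix identity~\eqref{E:B} and then recognize the answer using Corollary~\ref{C:sumyT2}. First I would record the elementary fact that for any $8\times 8$ matrix~$N$ the $(1,1)$-entry of the product $NS$ equals $N_{18}$, since the first column of~$S$ is $(0,0,0,0,0,0,0,1)^\top$. Applying this to each term of~\eqref{E:B}: the summand $\Xi(\vxi)\Xi(\vz)S$ contributes $\Xi(\vxi)\Xi(\vz)\cdot S_{11}=0$, and for each nonzero even $2$-torsion point~$T$ the summand $\frac{y_T(\vxi)y_T(\vz)}{4r(T)}M_T S$ contributes $\frac{y_T(\vxi)y_T(\vz)}{4r(T)}\,(M_T)_{18}$, where $(M_T)_{18}=1$ because the upper right entry of~$M_T$ is~$1$ for every nonzero even~$T$ (Section~\ref{S:2tors}). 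Hence
\[ 2B_{11}(\vxi,\vz) = \sum_{T\neq 0}\frac{y_T(\vxi)y_T(\vz)}{4r(T)} . \]

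Next I would rewrite $\tfrac{1}{4r(T)}=\tfrac{2}{8r(T)}$ and complete the sum so that it runs over \emph{all} even $2$-torsion points, using $r(0)=1$ and that $y_0$ is the quadratic form attached to $T=0$:
\[ 2B_{11}(\vxi,\vz) = 2\Bigl(\sum_{T\colon\eps(T)=1}\frac{y_T(\vxi)y_T(\vz)}{8r(T)}\Bigr) - \frac{y_0(\vxi)y_0(\vz)}{4} . \]
Corollary~\ref{C:sumyT2} identifies the bracketed sum with $\langle\vxi,\vz\rangle_S^2$, so $B_{11}(\vxi,\vz)=\langle\vxi,\vz\rangle_S^2-\tfrac18 y_0(\vxi)y_0(\vz)$. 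The correction term lies in the ideal $(y_0(\vxi),y_0(\vz))$, which is exactly the claimed congruence; recall moreover that $B$ itself is only determined modulo addition of multiples of $y_0(\vxi)$ and $y_0(\vz)$, so this is the natural form of the statement.

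I do not anticipate a genuine obstacle: the computation is short and uses only results already established. The only points requiring care are bookkeeping — the sum in~\eqref{E:B} runs over the $35$ nonzero even $2$-torsion points whereas the sum in Corollary~\ref{C:sumyT2} also includes $T=0$, and one must track the factor~$2$ relating $\tfrac{1}{4r(T)}$ to $\tfrac{1}{8r(T)}$. As an independent sanity check one could instead reduce the explicit bi-quadratic form for~$B_{11}$ from \texttt{Kum3-biquforms.magma} modulo $(y_0(\vxi),y_0(\vz))$ and compare it directly with $\langle\vxi,\vz\rangle_S^2$.
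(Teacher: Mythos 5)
Your proposal is correct and takes essentially the same route as the paper: read off the $(1,1)$-entry of the matrix identity~\eqref{E:B}, observe that $(M_T S)_{11} = (M_T)_{18} = 1$ for nonzero even~$T$ and that $S_{11}=0$, and conclude via Corollary~\ref{C:sumyT2}. You are actually slightly more careful than the paper's terse proof about completing the sum to include $T=0$ (picking up the correction term $-\tfrac{1}{8}y_0(\vxi)y_0(\vz)$, which visibly lies in the ideal); the paper elides this step but is implicitly working modulo $(y_0(\vxi),y_0(\vz))$ throughout.
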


\begin{proof}
  This follows from $\langle z_T, \xi_1^2  \rangle = [\xi_8^2] y_T = 1$
  (for $T \neq 0$) and Corollary~\ref{C:sumyT2}:
  \begin{align*}
    B_{11}(\vxi,\vz)
      &\equiv \sum_{T \neq 0} \frac{y_T(\vxi) y_T(\vz)}{8 r(T)}
       = \langle \vxi, \vz \rangle_S^2 . \qedhere
  \end{align*}
\end{proof}

\begin{Corollary}
  For two points $P, Q \in \CJ$ with images $\kappa(P), \kappa(Q) \in \CK$, we have that
  \[ P \pm Q \in \Theta \iff \langle \kappa(P), \kappa(Q) \rangle_S = 0 . \]
\end{Corollary}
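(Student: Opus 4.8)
The plan is to derive this as a direct consequence of the preceding Lemma together with the geometric meaning of the $(1,1)$-entry of the bi-quadratic form matrix~$B$. Recall that $B$ was constructed so that, for coordinates $\underline{w}$ of $\kappa(P+Q)$ and $\underline{z}$ of $\kappa(P-Q)$, one has $w_i z_j + w_j z_i = 2 B_{ij}(\kappa(P),\kappa(Q))$ up to a common scalar; in particular the $(1,1)$-entry gives $w_1 z_1 = B_{11}(\kappa(P),\kappa(Q))$ up to that scalar. Here $w_1$ and $z_1$ are the first projective coordinates $\xi_1$ of $\kappa(P+Q)$ and $\kappa(P-Q)$ respectively. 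Since $\xi_1$ is (a representative of) the constant function~$1$ in $L(2\Theta)$, and since the image on $\CK$ of the theta divisor lies in the hyperplane $\xi_1 = 0$ (as noted in Section~\ref{S:coords}, where the origin maps to $o = (0:\ldots:0:1)$ and points of $\Theta$ have $\xi_1 = 0$), a point of $\CK$ has vanishing first coordinate precisely when it is the image of a point of~$\Theta$.

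First I would make the normalization explicit: choose affine representatives so that $\kappa(P+Q)$ and $\kappa(P-Q)$ are genuine vectors $\underline{w},\underline{z} \in k^8$, and fix the scalar ambiguity in $B$ so that $w_1 z_1 = B_{11}(\kappa(P),\kappa(Q))$ on the nose (this is consistent with the normalization $B_{88}(o,o)=1$ used throughout). Then by the Lemma just proved, $B_{11}(\kappa(P),\kappa(Q)) \equiv \langle \kappa(P),\kappa(Q)\rangle_S^2 \bmod (y_0(\kappa(P)), y_0(\kappa(Q)))$; but $y_0$ vanishes identically on $\CK$ by the quadratic relation~\eqref{quadrel}, so in fact $B_{11}(\kappa(P),\kappa(Q)) = \langle \kappa(P),\kappa(Q)\rangle_S^2$ exactly. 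Hence $w_1 z_1 = \langle \kappa(P),\kappa(Q)\rangle_S^2$.

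Next I would read off the equivalence. We have $P+Q \in \Theta$ or $P-Q \in \Theta$ if and only if $w_1 = 0$ or $z_1 = 0$, i.e.\ if and only if $w_1 z_1 = 0$, i.e.\ if and only if $\langle \kappa(P),\kappa(Q)\rangle_S^2 = 0$, which since $k$ is a field is equivalent to $\langle \kappa(P),\kappa(Q)\rangle_S = 0$. One small point to address is the degenerate configuration where the formula $w_i z_j + w_j z_i = 2 B_{ij}$ might a priori determine $\{\underline{w},\underline{z}\}$ only up to the matrix being rank-deficient in a way that obscures the individual first coordinates; but the product $w_1 z_1$ is an honest symmetric function of the unordered pair $\{P+Q, P-Q\}$ and is therefore well-defined regardless, which is all we use. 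The main (and essentially only) obstacle is bookkeeping: making sure the projective scaling ambiguities are pinned down consistently so that the equation $w_1 z_1 = \langle \kappa(P),\kappa(Q)\rangle_S^2$ holds with no spurious nonzero factor — once that is in place, the rest is immediate.
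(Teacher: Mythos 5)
Your proof is correct and follows the same route as the paper's (quite terse) proof: use the preceding Lemma to identify $B_{11}$ with $\langle\cdot,\cdot\rangle_S^2$ on $\CK$, then observe that $B_{11}(\kappa(P),\kappa(Q))$ is, up to scaling, $\xi_1(P+Q)\,\xi_1(P-Q)$, whose vanishing is exactly membership of $P+Q$ or $P-Q$ in $\Theta$. You have merely spelled out the normalization and degenerate-pair bookkeeping that the paper leaves implicit.
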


\begin{proof}
  The bilinear form associated to~$S$ vanishes if and only
  if $B_{11}(\kappa(P),\kappa(Q))$ vanishes,
  which means that $\xi_1(P+Q) \xi_1(P-Q) = 0$, which in turn is equivalent
  to $P + Q \in \Theta$ or $P - Q \in \Theta$.
\end{proof}

This is analogous to the duality between the Kummer Surface and the Dual Kummer Surface
in the case of a curve of genus~$2$, see~\cite[Thm.~4.3.1]{CasselsFlynn}.
The difference is that here the Kummer variety is self-dual.

We can now also describe the locus of vanishing of~$y_T$ on~$\CK$.

\begin{Corollary}
  Let $T \neq 0$ be an even $2$-torsion point. Then for $P \in \CJ$, we have
  that $y_T(\kappa(P)) = 0$ if and only if $2 P + T \in \Theta$.
\end{Corollary}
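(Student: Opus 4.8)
The plan is to derive this from the Corollary just proved, applied to the pair $(P, P+T)$.

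First I would set $Q = P + T$, so that $P + Q = 2P + T$ while $P - Q = -T = T$ (since $T$ has order dividing~$2$). The preceding Corollary then yields
\[ \bigl(2P + T \in \Theta\bigr) \text{ or } \bigl(T \in \Theta\bigr) \iff \langle \kappa(P), \kappa(P+T) \rangle_S = 0 . \]
The next step is to discard the disjunct $T \in \Theta$, and this is precisely where the evenness of~$T$ enters: a nonzero even $2$-torsion point does not lie on~$\Theta$, because by~\eqref{E:tors2} its image on~$\CK$ has first coordinate $\xi_1 = 1$, whereas $\kappa(\Theta)$ is contained in the hyperplane $\{\xi_1 = 0\}$ (Section~\ref{S:coords}). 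Hence the biconditional above simplifies to
\[ 2P + T \in \Theta \iff \langle \kappa(P), \kappa(P+T) \rangle_S = 0 . \]

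It then remains to identify $\langle \kappa(P), \kappa(P+T) \rangle_S$ with $y_T(\kappa(P))$. Translation by~$T$ on the ambient~$\BP^7$ is induced by the matrix~$M_T$, so if $\vxi$ represents~$\kappa(P)$ then $\vxi M_T^\top$ represents~$\kappa(P+T)$; together with the identity $y_T(\vxi) = \langle \vxi, \vxi M_T^\top \rangle_S$ recorded in Section~\ref{S:action} (equivalently, by Lemma~\ref{L:y} the Gram matrix of~$y_T$ is $S M_T$), this gives $\langle \kappa(P), \kappa(P+T) \rangle_S = y_T(\kappa(P))$ as homogeneous expressions, so in particular the two sides vanish together. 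Substituting this into the previous display finishes the proof.

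I do not anticipate a genuine obstacle here: the whole argument is a substitution into the preceding Corollary combined with the known description of~$y_T$ via~$S M_T$. The one point that deserves a moment's attention is the reduction $-T \notin \Theta$, which really does use that $T$ is \emph{even} — a nonzero odd $2$-torsion point is an odd theta characteristic and hence lies on~$\Theta$, so the clean statement fails in that case and the argument would only give $2P + T \in \Theta$ or $T \in \Theta$.
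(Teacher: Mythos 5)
Your proof is correct, and it is a genuinely different route from the paper's. The paper's argument (one line) uses Theorem~\ref{Thm:delta}~(3): $y_T^2 = \langle \kappa(T), \vd \rangle_S$ on~$\CK$, so $y_T(\kappa(P))^2 = \langle \kappa(T), \kappa(2P) \rangle_S$, and the preceding Corollary, applied to the pair $(T, 2P)$, gives vanishing iff $T \pm 2P \in \Theta$; these two conditions coincide because $-(T - 2P) = 2P + T$ (using $-T = T$) and $\Theta$ is symmetric, so no side case needs to be excluded. Your argument instead applies the preceding Corollary to the pair $(P, P+T)$ and identifies $\langle \kappa(P), \kappa(P+T) \rangle_S$ with $y_T(\kappa(P))$ via the Gram-matrix description $y_T(\vxi) = \langle \vxi, \vxi M_T^\top \rangle_S$ from Lemma~\ref{L:y}; the price is the extra disjunct $T \in \Theta$, which you correctly discard using $\xi_1(\kappa(T)) = 1 \neq 0$ from~\eqref{E:tors2}, and which is the precise place where evenness of~$T$ is used. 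Your route avoids passing through $y_T^2$ and the duplication polynomials~$\vd$ entirely, at the cost of that one extra observation; the paper's route is shorter because the disjunction collapses automatically. Both are clean; the identification of $\vxi M_T^\top$ with a coordinate vector for $\kappa(P+T)$ is justified exactly as you say (projectively, $M_T$ realizes translation by~$T$, and the last column of~$M_T$ is $\kappa(T)$, cf.\ Corollary~\ref{Cor:y}).
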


\begin{proof}
  This is because $y_T^2 = \langle \kappa(T), \vd \rangle_S$ (up to scaling).
\end{proof}

For $T = 0$, we get that $\Xi(\kappa(P)) = 0$ if and only if $2P \in \Theta$.
This is because $4 \Xi^2 = \delta_1$.


\section{Further properties of the duplication \\ and the sum-and-difference maps} \label{S:further}

With a view of considering bad reduction later, we now allow $k$ to be any field
and $F \in k[x,z]$ to be any binary form of degree~$8$; in particular, $F = 0$ is allowed.
Note that the relations deduced so far are valid
over~$\Z[f_0,\ldots,f_8]$ and so can be specialized to any $k$ and~$F$.
In this context, $\CK$ denotes the variety in~$\BP^7_k$ defined by
the specializations of the quadric and the $34$~quartics that define the
Kummer variety in the generic case, and $\delta$ denotes the rational map
(which now may have base points) from~$\CK$ to itself given by the quartics~$\vd$.
We can also still consider factorizations $F = G H$ into two
factors of degree~$4$ (if $F = 0$, we take both of the factors to be the zero form
of degree $4$) and obtain points on~$\CK$ that are specializations
of the images of $2$-torsion points. We will call equivalence classes of
such factorizations (up to scaling) `nontrivial even $2$-torsion points'
for simplicity, even though they do not in general arise from points of
order~$2$ on some algebraic group. If $T$ is such a nontrivial even $2$-torsion
point, then we denote the corresponding point on~$\CK$ by~$\kappa(T)$.
We normalize the coordinates of~$\kappa(T)$ such that the first coordinate is~$1$.
We also have the associated quadratic form~$y_T$. If $F = 0$, we obtain for example
$\kappa(T) = (1 : 0 : \ldots : 0)$ for the unique nontrivial even $2$-torsion point,
with associated quadratic form $y_T = \xi_8^2$.

We now state explicit criteria for the vanishing of~$\vd$ at a point on~$\CK$.
We first exhibit a necessary condition. For the following, we assume $k$ to be
algebraically closed and of characteristic $\neq 2$.

\begin{Remark}
  Note that in characteristic~$2$ we have that
  $\delta_1 = \ldots = \delta_7 = 0$ and $\delta_8 = y_T^2$ on~$\CK$ for all~$T$,
  where
  \[ y_T = \xi_8^2 + f_6 f_8 \xi_7^2 + f_4 f_8 \xi_6^2 + f_2 f_8 \xi_5^2 + f_4 f_6 \xi_4^2
            + f_2 f_6 \xi_3^2 + f_2 f_4 \xi_2^2 + f_2 f_4 f_6 f_8 \xi_1^2 ,
  \]
  which is the square of a linear form over~$k$ when $k$ is perfect.
  Let $\CL$ denote the hyperplane defined by this linear form. Then $\delta$
  restricts to a morphism on~$\CK \setminus \CL$,
  which is constant with image the origin~$(0 : \ldots : 0 : 1)$.
\end{Remark}

Assume for now that $F \neq 0$ and write
\[ F = F_0^2 F_1 \qquad \text{with $F_1$ squarefree.} \]
We define $\CT(F)$ to be the set of nontrivial even $2$-torsion points~$T$ associated
to factorizations $(G, H)$ with $G$ and~$H$ both divisible by~$F_0$.
So $\CT(F)$ is in bijection with the unordered partitions of the roots of~$F_1$
into two sets of equal size.
We also define $\CT(0)$ to be the one-element set $\{T\}$, where $T$ corresponds
to the factorization $0 = 0 \cdot 0$.

\begin{Lemma} \label{L:inherit}
  With the notation introduced above, the following statements are equivalent
  for a point on~$\CK$ with coordinate vector~$\vxi$:
  \begin{enumerate}[\upshape(i)]
    \item For all $T \in \CT(F)$, we have that $\langle \kappa(T), \vd(\vxi) \rangle_S = 0$.
    \item For all $T \in \CT(F)$, we have that $\langle \kappa(T), \vxi \rangle_S = 0$.
  \end{enumerate}
  In particular, $\vd(\vxi) = 0$ implies that $\langle \kappa(T), \vxi \rangle_S = 0$
  for all $T \in \CT(F)$.
\end{Lemma}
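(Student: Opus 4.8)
The key identity to exploit is Theorem~\ref{Thm:delta}(3), which (in its specialized form, valid over $\Z[f_0,\ldots,f_8]$) says that for every nontrivial even $2$-torsion point~$T$ one has
\[ y_T^2 \equiv \langle \kappa(T), \vd \rangle_S \bmod E_4^{\CJ[2]} . \]
Evaluating this congruence at a point $\vxi \in \CK$ kills the $E_4^{\CJ[2]}$ part, so $y_T(\vxi)^2 = \langle \kappa(T), \vd(\vxi) \rangle_S$ for all $T \in \CT(F)$ (here I am using that $\CT(F)$ consists precisely of certain factorizations $F = GH$, i.e.\ genuine nontrivial even $2$-torsion points in the sense of Section~\ref{S:further}). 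The plan is therefore to show that both conditions (i) and (ii) are equivalent to the single condition $y_T(\vxi) = 0$ for all $T \in \CT(F)$. The implication ``(i) $\iff$ $y_T(\vxi)=0$ for all $T\in\CT(F)$'' is immediate from the displayed identity, since over an algebraically closed field of characteristic $\neq 2$ a scalar is zero iff its square is. The last sentence of the lemma then follows trivially: $\vd(\vxi)=0$ forces $\langle\kappa(T),\vd(\vxi)\rangle_S=0$, hence (i), hence (ii).

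So the real content is the equivalence between $y_T(\vxi) = 0$ for all $T \in \CT(F)$ and condition (ii), namely $\langle \kappa(T), \vxi \rangle_S = 0$ for all $T \in \CT(F)$. For this I would use Corollary~\ref{Cor:y}: with $\kappa(T) = (1 : \tau_2 : \ldots : \tau_8)$ we have
\[ y_T = \xi_8^2 + 2 \sum_{j=2}^8 \eps_j \tau_j \xi_{9-j} \xi_8 + (\text{terms not involving } \xi_8) , \]
and the coefficient of $\xi_8$ in $y_T$, viewed as a polynomial in $\xi_8$, is exactly $2\langle \kappa(T), \vxi\rangle_S$ (compare the Definition of $\langle\cdot,\cdot\rangle_S$ and note $\eps_1=1$, $\tau_1=1$). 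More conceptually, $y_T(\vxi) = \langle \vxi, \vxi M_T^\top\rangle_S$ by the remark following Lemma~\ref{L:y}, and since $M_T$ sends $o$ to $\kappa(T)$ one expects the ``linear in $\vxi$'' behaviour along the image of $\Theta$ direction. The cleanest route is: a point $\vxi \in \CK$ with $\xi_1 = 0$ lies on (twice) the image of $\Theta$, whose explicit parameterization is recorded at the end of Section~\ref{S:coords}; a point with $\xi_1 \neq 0$ can be normalized to $\xi_1 = 1$. One then checks directly, using the description of $y_T$ and of $\kappa(T)$ as specializations, that $y_T(\vxi)$ and $\langle\kappa(T),\vxi\rangle_S$ have the same vanishing locus as $T$ ranges over $\CT(F)$ — the point being that the ``$(\text{terms not involving }\xi_8)$'' part of $y_T$ and the $\xi_8^2$ term are controlled by the others once all the linear functionals $\langle\kappa(T),\cdot\rangle_S$ vanish.

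The main obstacle I anticipate is exactly this last step: going from ``all the linear functionals $\langle\kappa(T),\vxi\rangle_S$ vanish'' to ``all the quadratic forms $y_T(\vxi)$ vanish'' requires knowing that the lower-order (in $\xi_8$) part of $y_T$ does not contribute anything new — i.e.\ that $y_T$, restricted to the linear subspace cut out by $\{\langle\kappa(T'),\vxi\rangle_S = 0 : T' \in \CT(F)\}$, vanishes identically. I would handle this by a degeneration/specialization argument: it suffices to verify it after a $\GL(2)$-transformation (Section~\ref{S:trans}) putting $F$ in a convenient shape, or even — since everything is defined over $\Z[f_0,\ldots,f_8]$ and we only need an equivalence of vanishing loci — by reducing to the generic squarefree case, where $\CT(F)$ is the full set of $35$ nontrivial even $2$-torsion points, Lemma~\ref{L:x2iny} lets one recover every $\xi_i\xi_j$ from the $y_T$, and the statement becomes a clean linear-algebra fact about the $\CJ[2]$-eigenspace decomposition; the general case then follows by specialization, using that $\CT(F)$ is defined so as to consist of exactly those $T$ that survive the degeneration of $F$ to $F_0^2 F_1$. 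The degenerate case $F = 0$ is immediate from the explicit $\kappa(T) = (1:0:\cdots:0)$, $y_T = \xi_8^2$.
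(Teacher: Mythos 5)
Your first reduction is exactly the paper's: by Theorem~\ref{Thm:delta}~(3), condition~(i) is equivalent to $y_T(\vxi)=0$ for all $T\in\CT(F)$, and the final sentence of the lemma then follows from this together with Theorem~\ref{Thm:delta}~(4). Your treatment of the two extreme cases ($F=0$, and $F$ squarefree, where the $\kappa(T)$ for $T\in\CT(F)$ have trivial common annihilator and Lemma~\ref{L:x2iny} recovers $\vxi$ from the $y_T$) also matches the paper.

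The gap is in the remaining case $F=F_0^2F_1$ with $F_0$ nonconstant, which is where the actual content of the lemma lies. What has to be shown is that the vanishing of the quadratic forms $y_T(\vxi)$ for $T\in\CT(F)$ is equivalent to the vanishing of the linear forms $\langle\kappa(T),\vxi\rangle_S$ for $T\in\CT(F)$. The paper establishes this by verifying, case by case on the factorization type of~$F_0$ (after moving its roots to an initial segment of $(0,\infty,1,a)$), the explicit fact that \emph{the $y_T$ with $T\in\CT(F)$ form a basis of the symmetric square of the span of the linear forms $\langle\kappa(T),\cdot\rangle_S$, $T\in\CT(F)$}; e.g.\ for $\deg F_0=1$ the linear forms span $\langle\xi_4,\xi_6,\xi_7,\xi_8\rangle$ and one checks that a certain $10\times10$ coefficient matrix has determinant a power of $2$ times a power of $\disc(F_1)$. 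Both implications fall out of this at once (one direction because each $y_T$ lies in the symmetric square of the span, the other because each $\langle\kappa(T),\cdot\rangle_S^2$ lies in the span of the $y_T$). Your proposal replaces this with ``specialize from the generic squarefree case,'' but that does not work: in the generic case the conclusion $\vxi=0$ requires \emph{all} $35$ forms $y_T$ to vanish, whereas in the degenerate case the hypothesis only concerns the proper subset $\CT(F)$, and both this subset and the span of the associated linear forms jump discontinuously at the discriminant locus. There is no specialization principle that transports the generic linear-algebra statement to the statement actually needed about the degenerate fibre; the degenerate case has to be handled by a direct computation of the kind the paper carries out. (A minor additional slip: the coefficient of $\xi_8$ in $y_T$ is $2\langle\kappa(T),\vxi\rangle_S-2\xi_8$, not $2\langle\kappa(T),\vxi\rangle_S$; it is $\partial y_T/\partial\xi_8$ that equals $2\langle\kappa(T),\vxi\rangle_S$.)
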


\begin{proof}
  By Theorem~\ref{Thm:delta}~(3), we have for all $T \in \CT(F)$ that
  $y_T(\vxi)^2 = \langle \kappa(T), \vd(\vxi) \rangle_S$, so (i) is equivalent
  to $y_T(\vxi) = 0$ for all $T \in \CT(F)$.
  When $F = 0$, we have $y_T = \xi_8^2$ and $\kappa(T) = (1:0:\ldots:0)$ for
  the unique $T \in \CT(F)$, so $y_T(\vxi) = 0$ is equivalent to $\xi_8 = 0$,
  which is equivalent to $\langle \kappa(T), \vxi \rangle_S = 0$.
  If, at the other extreme, $F$ is squarefree, then one checks\star\ that
  the coordinate vectors of the points in~$\CT(F)$ are linearly independent,
  which implies that (i) is equivalent to $\vd(\vxi) = 0$ and (ii) is
  equivalent to $\vxi = 0$. The claim then follows from
  Theorem~\ref{Thm:delta}~(4).

  We now assume that $F \neq 0$ and write $F = F_0^2 F_1$ as above with $F_1$ squarefree
  and $F_0$ non-constant. We check by an explicit computation\star\ that

  (*) \emph{the $y_T$ for $T \in \CT(F)$ form a basis of the symmetric square
  of the space spanned by the linear forms $\langle \kappa(T), \cdot \rangle_S$
  for $T \in \CT(F)$.}

  This implies that the vanishing of the~$y_T$ is equivalent to~(ii).
  To verify~(*), we can apply a transformation moving the
  roots of~$F_0$ to an initial segment of~$(0, \infty, 1, a)$
  (where $a \in k \setminus \{0,1\}$). The most involved case is when $\deg F_0 = 1$.
  We can then take $F_0 = x$ and find that the linear forms given by the
  $T \in \CT(F)$ span $\langle \xi_4, \xi_6, \xi_7, \xi_8 \rangle$ and that
  the $10 \times 10$ matrix whose rows are the coefficient vectors of the~$y_T$
  with respect to the monomials of degree~$2$ in these four variables
  has determinant a power of two times a power of $\disc(F_1)$, hence is invertible.
  The other cases are similar, but simpler.
\end{proof}

This prompts the following definition.

\begin{Definition}
  We write $\CK_\good$ for the open subscheme
  \[ \CK \setminus \{P : \text{$\langle \kappa(T), P \rangle_S = 0$ for all $T \in \CT(F)$}\} \]
  of~$\CK$.
\end{Definition}

Lemma~\ref{L:inherit} now immediately implies the following.

\begin{Corollary} \label{C:closed2}
  The rational map $\delta$ on~$\CK$ restricts to a
  morphism $\CK_\good \to \CK_\good$.
\end{Corollary}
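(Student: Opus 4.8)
The plan is to show that $\delta$, viewed as a rational map $\CK \dashrightarrow \CK$, is everywhere defined on the open subscheme $\CK_\good$, i.e.\ that its eight defining quartics $\delta_1,\ldots,\delta_8$ do not vanish simultaneously at any point of $\CK_\good$. So suppose $\vxi$ are the coordinates of a point of $\CK$ with $\vd(\vxi) = 0$. By Lemma~\ref{L:inherit} (the last sentence of its statement), $\vd(\vxi) = 0$ forces $\langle \kappa(T), \vxi \rangle_S = 0$ for all $T \in \CT(F)$; but that is precisely the condition defining the complement of $\CK_\good$ in $\CK$. Hence no point of $\CK_\good$ is a base point of $\delta$, so $\delta$ restricts to a morphism $\CK_\good \to \CK$.

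The second thing to check is that the image lands in $\CK_\good$ and not merely in $\CK$. For this I would invoke Theorem~\ref{Thm:delta}~(3), which gives $y_T(\vxi)^2 = \langle \kappa(T), \vd(\vxi) \rangle_S$ modulo $E_4^{\CJ[2]}$ for each $T \in \CT(F)$ (these $T$ are in particular nontrivial even $2$-torsion points in the sense of Section~\ref{S:further}, so the relation applies). If $\vxi$ represents a point of $\CK_\good$, then by definition there is some $T_0 \in \CT(F)$ with $\langle \kappa(T_0), \vxi \rangle_S \neq 0$; by the equivalence of (i) and (ii) in Lemma~\ref{L:inherit} this means $y_{T_0}(\vxi) \neq 0$, and then the displayed identity shows $\langle \kappa(T_0), \vd(\vxi) \rangle_S = y_{T_0}(\vxi)^2 \neq 0$. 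Since $\vd(\vxi)$ is (by the first paragraph) a genuine point of $\BP^7$ lying on $\CK$, and it fails the vanishing condition for $T_0$, it lies in $\CK_\good$. Therefore $\delta$ maps $\CK_\good$ into $\CK_\good$, as claimed.

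Strictly speaking, the statement of the corollary is asserted over an arbitrary field $F \neq 0$, whereas Lemma~\ref{L:inherit} was proved assuming $k$ algebraically closed. I would note that $\CK_\good$, the morphism $\delta$, and the set $\CT(F)$ are all defined over the given base by specialization from $\Z[f_0,\ldots,f_8]$, so it suffices to verify the non-vanishing claims geometrically; equivalently one base-changes to $\bar k$, applies the argument above, and descends. (When $F = 0$ one uses the explicit description $\kappa(T) = (1:0:\ldots:0)$, $y_T = \xi_8^2$ recorded just before Lemma~\ref{L:inherit}, together with the Remark that $\delta$ is then constant with value the origin $o = (0:\ldots:0:1)$, which indeed lies in $\CK_\good$ since $\langle \kappa(T), o \rangle_S = 1 \neq 0$.)

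There is really no hard computational core here: all the substantive work — the identity $y_T^2 = \langle \kappa(T), \vd \rangle_S \bmod E_4^{\CJ[2]}$, and the linear-algebra fact (*) that the $y_T$ for $T \in \CT(F)$ span the symmetric square of the span of the $\langle \kappa(T), \cdot\rangle_S$ — has already been done inside Lemma~\ref{L:inherit}. The only point requiring a little care, and the thing I would be most careful to state cleanly, is the logical packaging: the corollary is exactly the combination of ``$\delta$ has no base points on $\CK_\good$'' (the contrapositive of the final sentence of Lemma~\ref{L:inherit}) and ``$\delta(\CK_\good) \subseteq \CK_\good$'' (via Theorem~\ref{Thm:delta}~(3) and the (i)$\Leftrightarrow$(ii) equivalence). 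So the proof is short, and I expect the write-up to be three or four sentences citing these two inputs.
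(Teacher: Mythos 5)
Your approach is the paper's approach: the paper's own proof is the single sentence ``Lemma~\ref{L:inherit} now immediately implies the following.'' Your first paragraph (no base points on $\CK_\good$, by the contrapositive of the last sentence of Lemma~\ref{L:inherit}) is correct.

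There is, however, a genuine slip in your second paragraph. From $\langle \kappa(T_0), \vxi \rangle_S \neq 0$ you conclude ``by the equivalence of (i) and (ii) in Lemma~\ref{L:inherit} this means $y_{T_0}(\vxi) \neq 0$,'' for the \emph{same} $T_0$. That does not follow: (i)$\Leftrightarrow$(ii) is an equivalence between two ``for all $T \in \CT(F)$'' conditions, not a pointwise correspondence between individual $T$'s. Indeed, (*) in the proof of Lemma~\ref{L:inherit} tells you only that the collection $\{y_T\}_{T \in \CT(F)}$ spans the symmetric square of the span of the linear forms $\langle \kappa(T), \cdot\rangle_S$; a single $y_{T_0}$ is in general \emph{not} a multiple of $\langle \kappa(T_0), \cdot\rangle_S^2$, so there is no reason for the two non-vanishing statements to hold for the same index. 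The repair is immediate and actually shorter than what you wrote: since $\vxi \in \CK_\good$, condition (ii) fails; by the equivalence, condition (i) fails; so there is \emph{some} $T \in \CT(F)$ (possibly $\neq T_0$) with $\langle \kappa(T), \vd(\vxi)\rangle_S \neq 0$, which simultaneously shows $\vd(\vxi) \neq 0$ and $\delta(\vxi) \in \CK_\good$. (This also makes the appeal to Theorem~\ref{Thm:delta}~(3) in your write-up redundant; it is already baked into the equivalence.) Finally, a small misattribution: the Remark you cite for the $F=0$ case (``$\delta$ is constant with value the origin'') is about characteristic~$2$, not about $F = 0$; but this is harmless because Lemma~\ref{L:inherit} handles $F = 0$ explicitly, so the uniform argument above covers that case as well.
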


We will now consider the `bad' subset $\CK \setminus \CK_\good$ of~$\CK$
in more detail, in particular in relation to the base locus of~$\delta$,
which it contains according to Corollary~\ref{C:closed2}.
We begin with a simple sufficient condition for a point to be in the base locus.

\begin{Lemma} \label{L:d=0suff}
  Assume that $F(x,z)$ is divisible by~$z^2$.
  Let $\vxi$ be the coordinate vector of a point on~$\CK$ such that
  $\xi_2 = \xi_3 = \xi_4 = \xi_8 = 0$. Then $\vd(\vxi) = 0$.
\end{Lemma}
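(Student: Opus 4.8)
The plan is to exploit the fact that the locus $\{\xi_2=\xi_3=\xi_4=\xi_8=0\}$ is one of the two \emph{maximal isotropic} subspaces of the pairing $\langle\cdot\,,\cdot\rangle_S$, and that $z^2 \mid F$ is precisely the factorization pattern of~$F$ that ties $\CT(F)$ to this subspace; the structural input is then Lemma~\ref{L:inherit}. First I would record that in the normalization $F = f_8 x^8 + \dots + f_0 z^8$, divisibility of~$F$ by~$z^2$ means exactly $f_7 = f_8 = 0$; since all identities of the previous sections hold over $\Z[f_0,\dots,f_8]$, I may specialize to this case and must show $\delta_1(\vxi)=\dots=\delta_8(\vxi)=0$. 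Set $\Lambda := \{\xi_2=\xi_3=\xi_4=\xi_8=0\}$. The matrix~$S$ pairs the $i$th coordinate with the $(9-i)$th, and none of $1,5,6,7$ is paired with another of them, so $\Lambda$ is totally isotropic of the maximal dimension~$4$, whence $\Lambda^{\perp_S}=\Lambda$. Moreover, $z^2\mid F$ forces $z$ to divide the square part~$F_0$ in $F = F_0^2 F_1$, so both factors $G,H$ of any $T\in\CT(F)$ are divisible by~$z$; substituting $g_4 = h_4 = 0$ into~\eqref{E:tors2} shows $\kappa(T)\in\Lambda$, and in fact the linear forms $\langle\kappa(T),\cdot\rangle_S$, $T\in\CT(F)$, span $\langle\xi_2,\xi_3,\xi_4,\xi_8\rangle$. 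This last point is exactly the computation carried out in the proof of Lemma~\ref{L:inherit}, transported by the transformation $(x,z)\mapsto(z,x)$ that moves the relevant root of~$F_0$ to~$0$; equivalently, $\Lambda\cap\CK = \CK\setminus\CK_\good$.

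Granting this, the structural half of the proof is immediate. Since our~$\vxi$ lies in~$\Lambda$ and $\Lambda$ is isotropic, $\langle\kappa(T),\vxi\rangle_S = 0$ for every $T\in\CT(F)$; the implication (ii)$\Rightarrow$(i) of Lemma~\ref{L:inherit} then gives $\langle\kappa(T),\vd(\vxi)\rangle_S = 0$ for all $T\in\CT(F)$, and because these functionals span $\langle\xi_2,\xi_3,\xi_4,\xi_8\rangle$ this says precisely $\delta_2(\vxi)=\delta_3(\vxi)=\delta_4(\vxi)=\delta_8(\vxi)=0$, i.e. $\vd(\vxi)\in\Lambda$.

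It remains to kill the four coordinates $\delta_1,\delta_5,\delta_6,\delta_7$, namely those that can be nonzero on~$\Lambda$; I expect this to be the genuine obstacle, since the argument via $\CT(F)$ controls only the coordinates $S$-dual to~$\Lambda$. For $\delta_1 = 4\Xi^2$ I would substitute $f_7 = f_8 = 0$ into the explicit cubic expression for $\xi_1\Xi$ from Section~\ref{S:dup}: every surviving monomial turns out to be divisible by one of $\xi_2,\xi_3,\xi_4,\xi_8$, so $\xi_1(\vxi)\,\Xi(\vxi) = 0$, hence $\Xi(\vxi)=0$ (and $\delta_1(\vxi)=0$) whenever $\xi_1(\vxi)\neq 0$; for $\xi_1(\vxi)=0$ the point~$\vxi$ lies on the image of~$\Theta$ described at the end of Section~\ref{S:coords}, where $\xi_2 = 0$ forces $\sigma_0 = 0$, and one reads off from those formulas that $\Xi$, hence $\delta_1$, still vanishes. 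With $\delta_1(\vxi)=0$ one has $\vd(\vxi)\in\{\xi_1=\xi_2=\xi_3=\xi_4=\xi_8=0\}\cap\CK$; the $4\times 4$ minors of the matrix~$M$ of~\eqref{eqn:Matrix} collapse on this locus to $(\xi_5\xi_7-\xi_6^2)^2$, which narrows $\vd(\vxi)$ to a conic but does not by itself force it to be zero. Closing the gap requires a direct computation showing that $\delta_5,\delta_6,\delta_7$, specialized to $f_7 = f_8 = 0$ and reduced modulo the defining equations of~$\CK$, also vanish at~$\vxi$; this explicit verification is the one step for which I see no structural shortcut.
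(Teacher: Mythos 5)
Your proposal is not a complete proof, and the part of it that is structural contains an error. The paper's own proof is a single polynomial identity: substituting $f_7 = f_8 = \xi_2 = \xi_3 = \xi_4 = \xi_8 = 0$ into the explicit expressions for $\delta_1, \ldots, \delta_8$ makes all eight of them vanish \emph{identically} (this is one of the starred machine-verified claims); no appeal to Lemma~\ref{L:inherit}, to the equations of~$\CK$, or to the geometry of~$\Theta$ is needed. Your argument, by contrast, explicitly defers the vanishing of $\delta_5, \delta_6, \delta_7$ to ``a direct computation'' that you do not carry out, and your treatment of $\delta_1$ in the case $\xi_1 = 0$ rests on reading off the vanishing of~$\Xi$ from data (the cubics $\xi_j \Xi$ for $j \ge 2$) that the paper only provides in an external file. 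So even on its own terms the proposal establishes at most four of the eight required vanishings.

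The structural half also overreaches. You are right that every $T \in \CT(F)$ has both factors divisible by~$z$, so that \eqref{E:tors2} gives $\kappa(T) \in \Lambda$ and hence condition~(ii) of Lemma~\ref{L:inherit} holds; condition~(i) follows. But (i) only tells you that $\vd(\vxi)$ is $S$-orthogonal to the span of $\{\kappa(T) : T \in \CT(F)\}$, and your claim that the associated linear forms span all of $\langle \xi_2, \xi_3, \xi_4, \xi_8 \rangle$ is true only when $F_0 = z$ exactly in the decomposition $F = F_0^2 F_1$. The case analysis in the proof of Lemma~\ref{L:d=0nec1} shows that for $F_0 = x^2$ the forms span only $\langle \xi_7, \xi_8 \rangle$ and for $F_0 = x^3$ only $\langle \xi_8 \rangle$; transported by $(x,z) \mapsto (z,x)$, if $z^3 \mid F$ your argument yields only $\delta_2(\vxi) = \delta_8(\vxi) = 0$ (or just $\delta_8(\vxi) = 0$), not $\vd(\vxi) \in \Lambda$. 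The hypothesis of the lemma is merely $z^2 \mid F$, with no restriction on further degeneration, so these cases cannot be excluded. The honest conclusion is that the lemma is genuinely a statement about the explicit polynomials~$\vd$, and the direct substitution is not a shortcut one can avoid but the entire proof.
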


\begin{proof}
  Plugging $f_7 = f_8 = \xi_2 = \xi_3 = \xi_4 = \xi_8 = 0$ into the expressions
  for the~$\delta_j$ gives zero\star.
\end{proof}

We set
\[ \CL_\infty = \{(\xi_1 : \ldots : \xi_8) \in \BP^7 : \xi_2 = \xi_3 = \xi_4 = \xi_8 = 0\} . \]
Using the formulas given in Section~\ref{S:trans} for the action
on~$\vxi$, one sees easily that $\CL_\infty$ is invariant under scaling of~$x$
and also under shifting $(x, z) \mapsto (x + \lambda z, z)$ (always assuming that $f_7 = f_8 = 0$),
which together generate the stabilizer of~$\infty$ in~$\PGL(2)$.

For $F$ with a multiple root at some point $a \in \BP^1$, let $\tilde{F}$ be the
result of acting on~$F$ by a linear substitution~$\phi$ that moves $a$ to~$\infty$;
then $\tilde{F}$ is divisible by~$z^2$. We write $\CL_a \subset \BP^7$ for the image
of~$\CL_\infty$ under the automorphism of~$\BP^7$ induced by~$\phi^{-1}$.
Since the stabilizer of~$\infty$ in~$\PGL(2)$ leaves $\CL_\infty$ invariant,
this definition of~$\CL_a$ does not depend on the choice of~$\phi$.
For example,
\[ \CL_0 = \{(\xi_1 : \ldots : \xi_8) \in \BP^7 : \xi_4 = \xi_6 = \xi_7 = \xi_8 = 0\} . \]
We write $A(F) \subset \BP^1$ for the set of multiple roots of~$F$.
This is all of~$\BP^1$ when $F = 0$. Otherwise, $A(F)$ consists of the roots
of~$F_0$ when $F = F_0^2 F_1$ with $F_1$ squarefree.

\begin{Corollary} \label{C:d=0suff}
  If $P \in \CK \cap \CL_a$ for some $a \in A(F)$, then $\vd(P) = 0$.
\end{Corollary}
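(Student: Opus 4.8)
The plan is to reduce the statement to Lemma~\ref{L:d=0suff} by moving the multiple root~$a$ to~$\infty$ via a coordinate transformation as in Section~\ref{S:trans}.

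Fix $a \in A(F)$; since $k$ is algebraically closed, $a$ is a $k$-point of~$\BP^1$. Choose $\sigma \in \GL(2)$ whose associated substitution~$\phi$ on~$(x,z)$ sends~$a$ to~$\infty$, and let~$\tilde F$ be the binary octic obtained from~$F$ by the action of~$\phi$, so that $z^2 \mid \tilde F$ (this is the situation recalled in the definition of~$\CL_a$). By Section~\ref{S:trans}, $\sigma$ induces a linear automorphism~$\Phi$ of~$\BP^7$ on the coordinates~$\vxi$ which carries the variety~$\CK$ attached to~$F$ onto the variety~$\tilde\CK$ attached to~$\tilde F$; and by the very definition of~$\CL_a$ we have $\Phi(\CL_a) = \CL_\infty$. (If $F = 0$ then $\tilde F = 0$ as well, and the claim follows directly from Lemma~\ref{L:d=0suff}, so we may assume $F \neq 0$ from now on.)

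The second ingredient is the equivariance of the duplication quartics: writing $\tilde{\vd}$ for the duplication quartics attached to~$\tilde F$, one has $\Phi \circ \vd = c\,(\tilde{\vd} \circ \Phi)$ for some nonzero constant~$c$. For squarefree~$F$ this is clear, because $\delta$ is induced by multiplication by~$2$ on the Jacobian and the isomorphism of curves underlying~$\sigma$ intertwines multiplication by~$2$ on the two Jacobians; since both sides are $8$-tuples of quartic forms in~$\vxi$ and~$\Phi$ is linear, the proportionality factor must be constant. This is an identity of polynomials with coefficients in $\Z[f_0,\dots,f_8][r,s,t,u,(\det\sigma)^{-1}]$, so, exactly as with the other relations used throughout this section, it remains valid after specializing to an arbitrary octic~$F$ (in particular $F = 0$) and an arbitrary~$\sigma$.

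Now let $P \in \CK \cap \CL_a$. Then $\Phi(P) \in \tilde\CK \cap \CL_\infty$, and since $z^2 \mid \tilde F$, Lemma~\ref{L:d=0suff} applied to~$\tilde F$ gives $\tilde{\vd}(\Phi(P)) = 0$. Hence $\Phi(\vd(P)) = c\,\tilde{\vd}(\Phi(P)) = 0$, and since~$\Phi$ is invertible this forces $\vd(P) = 0$, as claimed. The only step demanding some care is the equivariance statement for non-squarefree~$F$, which is settled, as usual, by checking the polynomial identity at the generic point and specializing.
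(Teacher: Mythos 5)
Your proposal is correct and follows exactly the paper's (one-line) proof: apply a suitable automorphism of $\BP^1$ moving $a$ to $\infty$ and invoke Lemma~\ref{L:d=0suff}, the extra content being the equivariance of~$\vd$ under the induced linear action on~$\BP^7$, which the paper leaves implicit. One small caveat: the relation $\Phi \circ \vd = c\,(\tilde{\vd} \circ \Phi)$ holds only modulo quartics vanishing on~$\CK$ (the $\delta_j$ are normalized by a choice that need not be $\GL(2)$-equivariant on the nose), but since you only evaluate it at points of~$\CK$ and the correction terms specialize along with everything else, this does not affect the argument.
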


\begin{proof}
  This follows from Lemma~\ref{L:d=0suff} by applying a suitable automorphism of~$\BP^1$.
\end{proof}

So the base locus of~$\delta$ contains $\CK \cap \bigcup_{a \in A(F)} \CL_a$.
When $F$ is not a nonzero square, we can show that this is exactly
the `bad set' $\CK \setminus \CK_\good$.

\begin{Lemma} \label{L:d=0nec1}
  Assume that $F$ is not of the form $F = H^2$ with $H \neq 0$.
  Let $P$ be in the `bad set' $\CK \setminus \CK_\good$.
  Then $P \in \CL_a$ for some $a \in A(F)$. In particular,
  \[ \CK_\good = \CK \setminus \bigcup_{a \in A(F)} \CL_a , \]
  and $\CK \setminus \CK_\good = \CK \cap \bigcup_{a \in A(F)} \CL_a$
  is the base locus of~$\delta$.
\end{Lemma}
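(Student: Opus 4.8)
The plan is to establish the single inclusion that is not yet available: if $P \in \CK \setminus \CK_\good$, then $P \in \CL_a$ for some $a \in A(F)$. Once this is known, the reverse inclusion is Corollary~\ref{C:d=0suff} together with the definition of $\CK_\good$ via Lemma~\ref{L:inherit}, so the displayed equality $\CK_\good = \CK \setminus \bigcup_{a \in A(F)} \CL_a$ follows; and the identification of $\CK \setminus \CK_\good$ with the base locus of $\delta$ follows by combining Corollary~\ref{C:closed2} (the base locus is contained in the bad set) with Corollary~\ref{C:d=0suff} (every point of $\CK \cap \bigcup_a \CL_a$, i.e.\ every point of the bad set, is a base point). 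So the whole statement reduces to the one containment, and the argument should be by reduction to normal form under the $\GL(2)$-action described in Section~\ref{S:trans}.

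First I would dispose of the case $F = 0$ separately: here $A(F) = \BP^1$, $\CT(F) = \{T\}$ with $\kappa(T) = (1:0:\ldots:0)$ and $y_T = \xi_8^2$, so the bad set is $\CK \cap \{\xi_8 = 0\}$, and one checks directly from the quartic and $34$ defining quartics (specialized to $f_0 = \ldots = f_8 = 0$) that $\{\xi_8 = 0\} \cap \CK \subseteq \CL_\infty$ — or more simply observes $\CL_\infty \subseteq \{\xi_8 = 0\}$ and that the other defining equations force the remaining coordinates into the pattern of $\CL_\infty$. Then assume $F \neq 0$, write $F = F_0^2 F_1$ with $F_1$ squarefree and (by hypothesis) $F_0$ non-constant, and let $a$ be a root of $F_0$, so $a \in A(F)$. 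By (*) in the proof of Lemma~\ref{L:inherit}, the $y_T$ for $T \in \CT(F)$ span the symmetric square of the span of the linear forms $\langle \kappa(T), \cdot \rangle_S$, $T \in \CT(F)$; hence $P \in \CK \setminus \CK_\good$ means exactly $y_T(\vxi) = 0$ for all $T \in \CT(F)$. The task is to show that this system of quadratic equations, together with the equations of $\CK$, forces $\vxi$ into $\CL_a$ for one of the $a \in A(F)$.

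The cleanest route is to apply a linear substitution $\phi \in \GL(2)$ moving the roots of $F_0$ to an initial segment of $(0,\infty,1,\ldots)$, exactly as in the proof of Lemma~\ref{L:inherit}, so that after transformation $\tilde F$ is divisible by $z^2$ (and by more powers / more linear factors if $\deg F_0 > 1$). The most delicate case is $\deg F_0 = 1$, where we may take $F_0 = x$; then the computation recorded in Lemma~\ref{L:inherit} shows the linear forms $\langle \kappa(T), \cdot \rangle_S$ for $T \in \CT(F)$ span $\langle \xi_4, \xi_6, \xi_7, \xi_8 \rangle$, and $y_T(\vxi) = 0$ for all such $T$ is equivalent — using the invertibility of that $10 \times 10$ matrix over $k$, which holds as $\disc(F_1) \neq 0$ — to $\xi_4 = \xi_6 = \xi_7 = \xi_8 = 0$, i.e.\ $\vxi \in \CL_0$. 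Transforming back by $\phi^{-1}$ puts the original point in $\CL_a$. For $\deg F_0 \ge 2$ the span of the $\langle \kappa(T), \cdot\rangle_S$ is larger and the $y_T$-vanishing is even more restrictive, so one reduces to a coordinate subspace contained in some $\CL_a$ by the same mechanism, only with a smaller and more transparent matrix; I would carry this out by the same case analysis as in Lemma~\ref{L:inherit}, marking it with a star\star\ for Magma verification. The main obstacle is precisely this last step: one must verify that in each configuration of the multiple roots, the vanishing locus of $\{y_T : T \in \CT(F)\}$ on $\CK$ is contained in a single $\CL_a$ rather than merely in the union $\bigcup_a \CL_a$ — equivalently, that no "mixed" solution exists lying in $\CK$ but in none of the individual $\CL_a$. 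This should come out of a direct (starred) computation once $F_0$ is in normal form, using the explicit defining quartics of $\CK$ to rule out the spurious components; I expect it to be routine but not entirely automatic, which is why it is the crux of the argument.
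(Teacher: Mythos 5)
Your overall strategy is the paper's: reduce the statement to the single inclusion $\CK \setminus \CK_\good \subseteq \bigcup_{a \in A(F)} \CL_a$, normalize $F_0$ by a $\GL(2)$-substitution, and run a case analysis on the factorization type of~$F_0$ with starred computations against the defining quartics of~$\CK$; the derivation of the last two assertions from Corollaries \ref{C:closed2} and~\ref{C:d=0suff} is also exactly as in the paper. However, three points in your sketch are off. First, your treatment of $F = 0$ is wrong: $\CK \cap \{\xi_8 = 0\}$ is \emph{not} contained in~$\CL_\infty$ (a point of the rank-one locus with $\xi_2 \neq 0$ violates the condition $\xi_2 = 0$ defining~$\CL_\infty$). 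What is true, and what the paper proves, is that each such point lies in~$\CL_\lambda$ for some $\lambda$ depending on the point: the rank-one condition on the $3 \times 3$ matrix gives either $\xi_2 = \xi_3 = \xi_4 = 0$ (so $P \in \CL_\infty$) or, when $\xi_2 \neq 0$, a shift $x \mapsto x + \lambda z$ with $\lambda$ a root of the degree-$4$ polynomial $\tilde{\xi}_7$ (whose leading coefficient is~$\xi_2$) moves $P$ into~$\CL_0$, i.e.\ $P \in \CL_\lambda$. Second, your monotonicity heuristic for $\deg F_0 \ge 2$ is backwards: as $\deg F_0$ grows, $\CT(F)$ shrinks (it indexes partitions of the roots of~$F_1$, which has degree $8 - 2\deg F_0$), so the span of the linear forms $\langle \kappa(T), \cdot \rangle_S$ gets \emph{smaller} and the direct content of $P \notin \CK_\good$ gets \emph{weaker}. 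For $F_0 = x^2$ the hypothesis only gives $\xi_7 = \xi_8 = 0$, and the quartics defining~$\CK$ must be invoked to force $\xi_4 = \xi_6 = 0$; the higher-multiplicity cases are therefore not more transparent specializations of the $\deg F_0 = 1$ case but each require their own computation.

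Third, the "crux" you identify --- that the vanishing locus be contained in a \emph{single} $\CL_a$ rather than merely in the union --- is not what the lemma asserts and is in fact false when $F_0$ has several distinct roots: for $F_0 = xz$ the paper's computation yields $\xi_6 = \xi_7 = 0$ \emph{or} $\xi_2 = \xi_3 = 0$, i.e.\ $P \in \CL_0$ or $P \in \CL_\infty$, and the bad locus genuinely meets both pieces. Only containment in the union is required, which is what your parenthetical "no mixed solution in $\CK$ lying in none of the individual $\CL_a$" correctly expresses; the stronger single-$\CL_a$ statement should neither be expected nor attempted.
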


\begin{proof}
  Let $\vxi$ be a coordinate vector for~$P$. We write $F = F_0^2 F_1$ with $F_1$
  squarefree.
  We split the proof into various cases according to the factorization type of~$F_0$.
  If $F_0$ is constant, there is nothing to prove.
  Otherwise we move the roots of~$F_0$ to an initial segment of $(0, \infty, 1)$.
  \begin{enumerate}[1.]
    \item $F_0 = x$. In this case the assumption is equivalent to
          $\xi_4 = \xi_6 = \xi_7 = \xi_8 = 0$ (compare the proof of Lemma~\ref{L:inherit}),
          so that $P \in \CL_0$.
    \item $F_0 = x^2$. The assumption is that $\xi_7 = \xi_8 = 0$; using the equations
          defining~$\CK$ this implies\star\ that $\xi_4 = \xi_6 = 0$, so $P \in \CL_0$.
    \item $F_0 = x^3$. The assumption is that $\xi_8 = 0$,
          which implies\star\ that $\xi_7 = \xi_6 = \xi_4 = 0$, so $P \in \CL_0$.
    \item $F_0 = x z$. In this case the assumption is that $\xi_4 = \xi_8 = 0$,
          which then implies\star\ that $\xi_6 = \xi_7 = 0$
          or $\xi_2 = \xi_3 = 0$, and so $P \in \CL_0$ or~$P \in \CL_\infty$.
    \item $F_0 = x^2 z$. The assumption is that $\xi_8 = 0$,
          which leads to\star\ $P \in \CL_0$ or~$P \in \CL_\infty$.
    \item $F_0 = x z (x-z)$. A similar computation
          shows\star\ that $P \in \CL_0 \cup \CL_1 \cup \CL_\infty$.
    \item $F = 0$. Here the assumption is that $\xi_8 = 0$.
          The intersection $\CK \cap \{\xi_8 = 0\}$ is defined\star\ by the $2 \times 2$-minors
          of the matrix
          \[ \begin{pmatrix} \xi_2 & \xi_3 & \xi_4 \\
                             \xi_3 & \xi_4 + \xi_5 & \xi_6 \\
                             \xi_4 & \xi_6 & \xi_7
             \end{pmatrix} ,
          \]
          which therefore has rank~$1$ when evaluated on any point in $\CK \cap \{\xi_8 = 0\}$.
          If $\xi_2 = 0$, then this implies that
          $\xi_3 = \xi_4 = 0$ as well, so that $P \in \CL_\infty$. Otherwise,
          we can make a transformation shifting $x/z$ by~$\lambda$ as in Section~\ref{S:trans}
          that makes $\tilde{\xi}_7 = 0$ ($\tilde{\xi}_7$ is a polynomial of degree~$4$ in~$\lambda$
          with leading coefficient~$\xi_2$, so we can find a suitable~$\lambda$,
          since $k$ is assumed to be algebraically closed). Then we get that
          $\tilde{\xi}_8 = \tilde{\xi}_7 = \tilde{\xi}_6 = \tilde{\xi}_4 = 0$,
          so the image point is in~$\CL_0$, hence $P \in \CL_\lambda$.
  \end{enumerate}
  The last statement follows, since
  Corollary~\ref{C:closed2} shows that the base scheme of~$\delta$ is contained
  in $\CK \setminus \CK_\good$ and Corollary~\ref{C:d=0suff} shows that
  it contains the intersection of~$\CK$ with the union of the~$\CL_a$.
\end{proof}

We now consider the case $F = F_0^2 \neq 0$. Then the curve $y^2 = F(x,z) = F_0(x,z)^2$
splits into the two components $y = \pm F_0(x,z)$. The points on~$\CK$ correspond
to linear equivalence classes of effective divisors of degree~$4$, modulo the
action of the hyperelliptic involution. So there are three distinct possibilities
how the points can be distributed among the two components: two on each,
one and three, or all four on the same component. In the last case, we have
$B \equiv \pm F_0 \bmod A$, and we can change the representative so that $B = \pm F_0$,
which makes $C = 0$. So the two components of~$\Pic^4(\CC)$ consisting of classes
of divisors whose support is contained in one of the two components of~$\CC$
map to a single point $\omega \in \CK$, which one can check\star\ coincides with $\kappa(T)$
for the single $T \in \CT(F)$; it satisfies $\vd(\omega) = 0$.

Now a point~$P$ on the component of~$\CK$ corresponding to
the distribution of one and three points on the two components, if it is not
in the base scheme of~$\delta$, must satisfy $\delta(P) = \omega$. So for such
points we have $\vd(\delta(P)) = 0$, but $\vd(P) \neq 0$.
Let $\vxi$ be coordinates for a point~$P$ with $\delta(P) = \omega = \kappa(T)$.
Then $\langle \kappa(T), \vd(\vxi) \rangle_S = \langle \kappa(T), \kappa(T) \rangle_S = 0$
(all points on~$\CK$ satisfy $\langle \vxi, \vxi \rangle_S = y_0(\vxi) = 0$).
By Lemma~\ref{L:inherit}, this is equivalent to $\langle \kappa(T), \vxi \rangle_S = 0$.
We write $\CE$ for the hyperplane given by $\langle \kappa(T), \vxi \rangle_S = 0$.
So in this case $\CK_\good = \CK \setminus \CE$, and
$P \in \CK \cap \CE = \CK \setminus \CK_\good$ does not necessarily imply that $\vd(P) = 0$.
But we still have the following.

\begin{Lemma} \label{L:d=0nec2}
  Assume that $F = F_0^2$ with $F_0 \neq 0$. If $P \in \CK$ with $\vd(P) = 0$,
  then $P \in \CL_a$ for some $a \in A(F)$ (which here is simply the set of roots
  of~$F_0$).
\end{Lemma}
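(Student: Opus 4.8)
The plan is to reduce, as in the proof of Lemma~\ref{L:d=0nec1}, to explicit computations after normalizing the position of the roots of~$F_0$ by the action of $\GL(2)$ on~$(x,z)$ described in Section~\ref{S:trans}. Since $F = F_0^2$ with $F_0 \neq 0$ and $\deg F_0 = 4$, the multiset of roots of~$F_0$ has one of a small number of partition types, and for each type we may assume the distinct roots form an initial segment of~$(0, \infty, 1, a)$ with $a \in k \setminus \{0,1\}$. For each such normal form, $F_0$ (and hence $F = F_0^2$) is an explicit polynomial in the remaining free parameters, so we can substitute the corresponding values of $f_0, \dots, f_8$ into the formulas for the~$\delta_j$ and work with the resulting quartics in~$\vxi$ together with the quadric and quartics defining~$\CK$.

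First I would dispose of the case $\deg F_0 < 4$, which cannot occur here since $F$ has degree~$8$ and $F = F_0^2$ forces $\deg F_0 = 4$; so $F_0$ has exactly four roots counted with multiplicity, and $A(F)$ is their underlying set. The bulk of the argument is then a finite case analysis over the partition types of these four roots: all four equal ($F_0$ a fourth power, say $F_0 = x^4$), a triple and a single ($F_0 = x^3 z$ up to scaling), two double roots ($F_0 = x^2 z^2$), a double and two singles ($F_0 = x^2 z(x-z)$), and four distinct roots ($F_0 = x z (x-z)(x-az)$). For each, I would impose $\vd(\vxi) = 0$, use the defining equations of~$\CK$, and show by elimination that $\vxi$ lies in one of the hyperplane-type loci $\CL_a$ for $a$ a root of~$F_0$ — concretely, that the four coordinates cut out by~$\CL_0$ (namely $\xi_4 = \xi_6 = \xi_7 = \xi_8 = 0$, after moving the relevant root to~$0$) all vanish, or the analogous statement at $\infty$, $1$, or~$a$. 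These are the same kinds of Gröbner-basis-style eliminations already used (and starred) in the proof of Lemma~\ref{L:d=0nec1}, so they can be verified by the accompanying Magma file.

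The conceptual input that makes this work, and which I would state first, is that $\vd(P) = 0$ is \emph{stronger} than $P \in \CK \setminus \CK_\good = \CK \cap \CE$: by Lemma~\ref{L:inherit}, $\vd(\vxi) = 0$ forces $y_T(\vxi) = 0$ for the unique $T \in \CT(F)$, and this is an extra quadratic condition beyond the linear condition $\langle \kappa(T), \vxi \rangle_S = 0$ defining~$\CE$. So in each normalized case the system to solve is: the defining equations of~$\CK$, the equation $y_T(\vxi) = 0$ (equivalently all of $\delta_1 = \dots = \delta_8 = 0$), and I must conclude membership in some~$\CL_a$. I expect the main obstacle to be the case $F_0 = x z (x-z)(x-az)$ of four distinct roots, where the parameter~$a$ enters and one must check that the elimination ideal still forces $\vxi$ into $\CL_0 \cup \CL_\infty \cup \CL_1 \cup \CL_a$ for all $a \neq 0, 1$; here one may need to argue, as in case~7 of Lemma~\ref{L:d=0nec1}, that after a shift $x/z \mapsto x/z + \lambda$ bringing one of the remaining symmetry directions into standard position one lands in~$\CL_0$, using that $k$ is algebraically closed to solve for~$\lambda$. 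The remaining degenerate cases are strictly simpler, being invariant under a larger subgroup of $\PGL(2)$ fixing the root configuration, so the eliminations have fewer variables and terminate quickly.
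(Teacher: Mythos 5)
Your overall plan --- normalize the roots of $F_0$ to an initial segment of $(0, \infty, 1, a)$, run through the five factorization types $F_0 = x^4$, $x^3 z$, $x^2 z^2$, $x^2 z(x-z)$, $xz(x-z)(x-az)$, and verify by elimination (specializing the $f_i$ in the $\delta_j$ and the Kummer equations) that $\vd(\vxi) = 0$ forces $\vxi$ into one of the $\CL_a$ --- matches the paper's proof exactly, including the identification of the four-distinct-roots case as the hardest one.

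However, your proposed handling of that case has a gap. You suggest borrowing the shift technique from case~7 of Lemma~\ref{L:d=0nec1}: find $\lambda$ making $\tilde\xi_7 = 0$ (a degree-4 equation in $\lambda$, solvable since $k$ is algebraically closed) and then conclude $P \in \CL_\lambda$. That worked in case~7 because there $F = 0$, so $A(F) = \BP^1$ and \emph{every} $\lambda$ is a multiple root, making every $\CL_\lambda$ an admissible target. In the present case $A(F)$ is the finite set $\{0, 1, a, \infty\}$; a $\lambda$ obtained by solving the auxiliary degree-4 equation has no reason to lie in $A(F)$, so the conclusion $P \in \CL_\lambda$ does not give what the lemma asserts unless you separately show $\lambda$ is a root of $F_0$, which your sketch does not supply. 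The paper's actual device is the Klein Four Group $V_4 \subset \PGL(2)$ that permutes the four roots $\{0, 1, a, \infty\}$ (the three double transpositions, e.g.\ $x \mapsto a/x$ swapping $0 \leftrightarrow \infty$ and $1 \leftrightarrow a$); this symmetry is used to organize the elimination so that the four target hyperplanes $\CL_0$, $\CL_\infty$, $\CL_1$, $\CL_a$ are treated interchangeably and the Gr\"obner computation becomes tractable. Replacing the case-7 shift argument with this $V_4$-symmetry reduction would bring your proof into line with the paper's.
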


\begin{proof}
  We can again assume that the roots of~$F_0$ are given by an initial segment
  of~$(0, \infty, 1, a)$ (with $a \neq \infty, 0, 1$). We consider the
  various factorization types of~$F_0$ in turn; they are represented by
  \[ F_0 = x^4,\quad x^3 z,\quad x^2 z^2,\quad x^2 z (x-z) \quad\text{and}\quad x z (x-z) (x - az) . \]
  The computations\star\ are similar
  to those done in the proof of Lemma~\ref{L:d=0nec1}. The most involved case
  is when $F_0$ has four distinct roots. To deal with it successfully, we make
  use of the Klein Four Group of automorphisms of the set of roots of~$F_0$.
\end{proof}

We now have a precise description of the base scheme of the duplication map~$\delta$
on~$\CK$, which is given by the quartic forms~$\vd$.

\begin{Proposition} \label{P:d=0crit}
  Let $k$ be an algebraically closed field of characteristic $\neq 2$
  and let $F \in k[x,z]$ be homogeneous of degree~8. We denote by $\CK$
  and~$\vd$ the objects associated to~$F$.
  \begin{enumerate}[\upshape(1)]
    \item The base locus of~$\delta$ is $\CK \cap \bigcup_{a \in A(F)} \CL_a$.
    \item The base locus of $\delta \circ \delta$ is~$\CK \setminus \CK_\good$;
          $\delta$ can be iterated indefinitely on~$\CK_\good$.
    \item If $F$ is not of the form $F = F_0^2$ with $F_0 \neq 0$, then
          the base locus of~$\delta$ is~$\CK \setminus \CK_\good$.
  \end{enumerate}
\end{Proposition}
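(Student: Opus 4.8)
The plan is to assemble the three parts of Proposition~\ref{P:d=0crit} from the lemmas and corollaries already established, treating each as a bookkeeping exercise in combining known inclusions.

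\textbf{Part (1).} I would prove the two inclusions separately. One direction is Corollary~\ref{C:d=0suff}, which already gives $\CK \cap \bigcup_{a \in A(F)} \CL_a \subseteq \{P : \vd(P) = 0\}$, i.e.\ the union of the $\CL_a$-slices lies in the base locus. For the reverse inclusion I need to show every base point lies in some~$\CL_a$. By Corollary~\ref{C:closed2}, the base locus is contained in the `bad set' $\CK \setminus \CK_\good$. If $F$ is not a nonzero square, Lemma~\ref{L:d=0nec1} identifies $\CK \setminus \CK_\good$ with $\CK \cap \bigcup_{a} \CL_a$, so we are done in that case. If $F = F_0^2$ with $F_0 \neq 0$, then Lemma~\ref{L:d=0nec2} directly says $\vd(P) = 0$ implies $P \in \CL_a$ for some root $a$ of~$F_0$, which is exactly $A(F)$ in this case. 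Combining the two cases covers all~$F$ (the case $F$ constant or $A(F) = \emptyset$ being vacuous), so Part~(1) follows.

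\textbf{Part (3).} This is immediate: when $F$ is not of the form $F_0^2$ with $F_0 \neq 0$, Lemma~\ref{L:d=0nec1} (applied also in the degenerate subcases $F$ squarefree and $F = 0$, which are covered there) gives $\CK \setminus \CK_\good = \CK \cap \bigcup_{a \in A(F)} \CL_a$, and by Part~(1) this common set is the base locus of~$\delta$.

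\textbf{Part (2).} The iteration statement is the heart of the matter. By Corollary~\ref{C:closed2}, $\delta$ restricts to a morphism $\CK_\good \to \CK_\good$, so once we know a point lies in~$\CK_\good$ it stays there under all iterates and $\delta$ can be applied indefinitely; this gives one inclusion, namely that the base locus of $\delta \circ \delta$ is contained in $\CK \setminus \CK_\good$. For the reverse, I must show that every point $P \in \CK \setminus \CK_\good$ is in the base locus of $\delta \circ \delta$, i.e.\ either $\vd(P) = 0$ or $\vd(\vd(P)) = 0$. If $F$ is not a nonzero square this is clear from Part~(3), since then $P$ is already in the base locus of~$\delta$. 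The remaining case is $F = F_0^2$, $F_0 \neq 0$, where $\CK \setminus \CK_\good = \CK \cap \CE$ with $\CE$ the hyperplane $\langle \kappa(T), \vxi \rangle_S = 0$ for the unique $T \in \CT(F)$, and this is strictly larger than the base locus of~$\delta$. Here I would invoke the analysis preceding Lemma~\ref{L:d=0nec2}: a point $P \in \CK \cap \CE$ that is \emph{not} in the base locus of~$\delta$ lies on the component of $\CK$ corresponding to a $1$–$3$ distribution of points across the two components of~$\CC$, and for such a point $\delta(P) = \omega = \kappa(T)$, which satisfies $\vd(\omega) = 0$. Hence $\vd(\vd(P)) = 0$, so $P$ is in the base locus of $\delta \circ \delta$. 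Putting this together with Part~(1) gives $\CK \setminus \CK_\good$ as exactly the base locus of $\delta \circ \delta$.

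\textbf{Main obstacle.} The genuinely delicate point is the $F = F_0^2$ case in Part~(2): one must be sure that a point of $\CK \cap \CE$ outside the base locus of~$\delta$ really does map under~$\delta$ to the single point~$\omega = \kappa(T)$, and that $\vd$ vanishes there. This rests on the geometric description of the reducible curve $y^2 = F_0^2$ and the claim (checked by explicit computation in the text) that the two `single-component' sheets of $\Pic^4$ collapse to $\kappa(T)$ with $\vd(\kappa(T)) = 0$; I would lean on that computation rather than redo it, but it is the step where the argument is least formal and most in need of care. Everything else is a matter of correctly chaining the inclusions from Corollaries~\ref{C:closed2} and~\ref{C:d=0suff} and Lemmas~\ref{L:d=0nec1} and~\ref{L:d=0nec2}.
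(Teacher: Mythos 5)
Your proposal is correct and follows essentially the same route as the paper: Corollary~\ref{C:d=0suff} for the easy inclusion in~(1), Lemmas~\ref{L:d=0nec1} and~\ref{L:d=0nec2} (together with Corollary~\ref{C:closed2}, which the paper cites via Lemma~\ref{L:inherit}) for the converse; Corollary~\ref{C:closed2} for the iteration statement in~(2); and the geometric description of the case $F = F_0^2 \neq 0$, with $\delta(P) = \omega$ for points of $\CK \cap \CE$ outside the base locus of~$\delta$, to close the remaining gap. The one place where you phrase things slightly differently from the source is your claim that a point of $\CK \cap \CE$ outside the base locus of~$\delta$ ``lies on the component corresponding to a $1$--$3$ distribution''; the paper does not state the component membership explicitly but goes directly to the conclusion $\delta(P) = \omega$, which is all that is actually needed, and both rest on the same informal discussion preceding Lemma~\ref{L:d=0nec2}.
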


\begin{proof} \strut
  \begin{enumerate}[(1)]
    \item Corollary~\ref{C:d=0suff} shows that the condition is sufficient.
          Conversely, if $\vd(P) = 0$, then Lemmas \ref{L:inherit}, \ref{L:d=0nec1}
          and~\ref{L:d=0nec2} show that $P \in \CL_a$ for some multiple root~$a$ of~$F$.
    \item The second statement is Corollary~\ref{C:closed2}.
          In view of~(3), it is sufficient to consider the case $F = F_0^2 \neq 0$
          for the first statement. If $P \in \CK \setminus \CK_\good$ is not in
          the base locus of~$\delta$, then $\delta(P) = \omega$, which is in
          the base locus of~$\delta$, so $P$ is in the base locus of~$\delta \circ \delta$.
          Conversely, if $P$ is in the base locus of~$\delta \circ \delta$,
          then $P$ cannot be in~$\CK_\good$ by the second statement.
    \item This follows from Corollary \ref{C:closed2} and Lemma~\ref{L:d=0nec1}.
    \qedhere
  \end{enumerate}
\end{proof}

We can state a property of the `add-and-subtract' morphism
that is similar to that of~$\delta$ given in Corollary~\ref{C:closed2}.
We write $\alpha \colon \Sym^2\CK \to \Sym^2\CK$ for the map given by
the matrix~$B$ as defined in Section~\ref{S:sumdiff};
this is defined for arbitrary~$F \in k[x,z]$, homogeneous
of degree~8. In general~$\alpha$ is only a rational map.

\begin{Lemma} \label{L:closed+}
  Let $k$ be an algebraically closed field of characteristic $\neq 2$
  and let $F \in k[x,z]$ be homogeneous of degree~8. We denote by $\CK$
  and~$\vd$ the objects associated to~$F$.
  Then $\alpha$ restricts to a morphism $\Sym^2\CK_\good \to \Sym^2\CK_\good$.
\end{Lemma}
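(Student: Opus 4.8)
The plan is to reduce the statement about $\alpha$ on $\Sym^2\CK_\good$ to the statement about $\delta$ on $\CK_\good$ (Corollary~\ref{C:closed2}), using the explicit formula~\eqref{E:B} for the matrix~$B$ together with the inheritance principle of Lemma~\ref{L:inherit}. The key structural fact is that for a pair of points $\{P, Q\}$ on $\CK$, the unordered pair $\alpha(\{P,Q\}) = \{P+Q, P-Q\}$ on $\CK$ is recovered from the symmetric matrix $2B(\vxi,\vz)$ (with $\vxi,\vz$ coordinate vectors of $P,Q$), via the rank-2 decomposition $w_i z_j + w_j z_i = 2B_{ij}(\vxi,\vz)$; so one must show that if $P, Q \in \CK_\good$ then the matrix $B(\vxi,\vz)$ is nonzero and represents an unordered pair of points both lying in $\CK_\good$.

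**Key steps.** First I would record that $\alpha$ fails to be a morphism at $\{P,Q\}$ precisely when $B(\kappa(P),\kappa(Q))$ is the zero matrix, and that the image pair $\{P\pm Q\}$ lies outside $\CK_\good$ precisely when one of $P+Q$, $P-Q$ lies in $\bigcup_{a\in A(F)}\CL_a$ — equivalently, by the definition of $\CK_\good$ and Lemma~\ref{L:inherit}, when $\langle\kappa(T),\kappa(P\pm Q)\rangle_S = 0$ for all $T\in\CT(F)$ for one of the two choices of sign. Next, using~\eqref{E:B}, I would compute $\langle\kappa(T),\cdot\rangle_S$ applied to the column $B(\vxi,\vz)\cdot(\text{the }S\text{-dual of }\kappa(T))$ or, more directly, evaluate the quadratic form $y_T$ against the matrix $2B(\vxi,\vz)$: the pairing of $y_T$ (as an element of $V_2$, i.e.\ a symmetric bilinear form with matrix $S M_T$) with the symmetric matrix representing $\{P+Q,P-Q\}$ is $\tfrac12\bigl(y_T(\kappa(P+Q)) + y_T(\kappa(P-Q))\bigr)$ up to a harmless factor; I would show this equals, by~\eqref{E:B} and Lemma~\ref{L:ortho} together with the orthogonality of the $z_T$'s, a nonnegative-type expression that is a sum of terms $y_T(\vxi)y_T(\vz)$ and $\Xi(\vxi)\Xi(\vz)$ contributions — in fact it should come out to $y_T(\vxi)y_T(\vz)$ (times $r(T)^{-1}$, times a scalar) when $T\in\CT(F)$, since the $y_{T'}^*$ pick out only the $T'=T$ summand. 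Then Lemma~\ref{L:inherit}, applied to $P$ and to $Q$ separately, gives that $P\in\CK_\good$ implies $y_T(\kappa(P))\neq 0$ for at least one $T\in\CT(F)$, hence $y_T(\vxi)y_T(\vz)\neq 0$ for that $T$, hence $y_T(\kappa(P+Q)) + y_T(\kappa(P-Q)) \neq 0$, so at least one of $P+Q$, $P-Q$ has $y_T$ nonzero, i.e.\ lies in $\CK_\good$ — but since $P+Q$ and $P-Q$ differ by the $2$-torsion information... here I need a slightly finer argument: I would instead show the stronger statement that for every $T\in\CT(F)$ the product $y_T(\kappa(P+Q)) \cdot y_T(\kappa(P-Q))$ is a square times $y_T(\vxi)^2 y_T(\vz)^2$ (this is the bi-quadratic identity analogous to the genus~2 case), so that $y_T$ vanishes at one of $P\pm Q$ iff it vanishes at $P$ or at $Q$; combined with Lemma~\ref{L:inherit} this shows $P+Q$ and $P-Q$ are \emph{both} in $\CK_\good$, and in particular both are genuine points of $\CK$ (so $B(\vxi,\vz)\neq 0$).

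**Main obstacle.** The delicate point is establishing the bi-quadratic identity $y_T(\kappa(P+Q))\,y_T(\kappa(P-Q)) = (\text{square})\cdot y_T(\kappa(P))^2\,y_T(\kappa(Q))^2$ for $T\in\CT(F)$ — or whatever the precise correct form is — purely formally over $\Z[f_0,\dots,f_8]$, so that it specializes to arbitrary $k$ and $F$ (the setting of Section~\ref{S:further}). I expect this can be checked by an explicit computation\star: since $y_T = \langle\vxi,\vxi M_T^\top\rangle_S$ and the entries of $B$ are the explicit bi-quadratic forms in the file \texttt{Kum3-biquforms.magma}, the identity is a polynomial identity in $\vxi$, $\vz$ and the coefficients of a degree-$4$ factor $G$ of $F$ (with $H = F/G$), which can be verified directly by substitution and reduction modulo the ideal of $\CK\times\CK$. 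Alternatively, over an algebraically closed field of characteristic $\neq 2$ with $F$ squarefree it follows from the function-theoretic meaning ($y_T(\kappa(R)) = 0 \iff 2R + T\in\Theta$, by the Corollary in Section~\ref{S:sumdiff}, and $2(P+Q) + T \in\Theta$ or $2(P-Q)+T\in\Theta$ relates to $2P+T$, $2Q+T$ via the addition law on $\Theta$), and then one specializes. I would present the squarefree case via this geometric route and then invoke the standard specialization argument (all relevant relations hold over $\Z[f_0,\dots,f_8]$) to extend to the general $F$ of Section~\ref{S:further}, finishing by citing Lemma~\ref{L:inherit} exactly as above to conclude that $P,Q\in\CK_\good$ forces $P+Q, P-Q\in\CK_\good$ and $B(\kappa(P),\kappa(Q))\neq 0$.
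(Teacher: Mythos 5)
Your reduction of the problem to showing that $P,Q\in\CK_\good$ forces $B(\kappa(P),\kappa(Q))\neq 0$ and $P\pm Q\in\CK_\good$ identifies the right target, but the identity you propose as the engine of the proof is false, and the strategy of tracking the vanishing of individual $y_T$'s does not close. What the structure of $\beta$ actually gives is the \emph{polarized} identity $\tilde y_T\bigl(\kappa(P+Q),\kappa(P-Q)\bigr)=y_T(\kappa(P))\,y_T(\kappa(Q))$, where $\tilde y_T$ is the symmetric bilinear form attached to~$y_T$ (this is what "evaluating $y_T$ on the symmetric matrix" means in Section~\ref{S:sumdiff}; it is not $\tfrac12\bigl(y_T(\kappa(P+Q))+y_T(\kappa(P-Q))\bigr)$). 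The product identity $y_T(\kappa(P+Q))\,y_T(\kappa(P-Q))=(\text{square})\cdot y_T(\kappa(P))^2\,y_T(\kappa(Q))^2$ cannot hold: by the last Corollary of Section~\ref{S:sumdiff}, the left-hand side vanishes exactly when $2P+2Q+T\in\Theta$ or $2P-2Q+T\in\Theta$, while the right-hand side vanishes when $2P+T\in\Theta$ or $2Q+T\in\Theta$; these loci differ already for $F$ squarefree, so your claim that "$y_T$ vanishes at one of $P\pm Q$ iff it vanishes at $P$ or at $Q$" is wrong. Even with the correct polarized identity, the argument stalls: if all $\langle\kappa(T),\kappa(P+Q)\rangle_S$ vanish for $T\in\CT(F)$, then by~(*) in the proof of Lemma~\ref{L:inherit} one gets $y_T(\kappa(P))\,y_T(\kappa(Q))=0$ for all $T\in\CT(F)$, which does \emph{not} contradict $P,Q\in\CK_\good$, since the $T$ witnessing $y_T(\kappa(P))\neq 0$ may differ from the one witnessing $y_T(\kappa(Q))\neq 0$. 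The same gap affects your argument that $B(\kappa(P),\kappa(Q))\neq 0$. Finally, the fallback "prove it for squarefree $F$ and specialize" is empty here: for squarefree $F$ one has $\CK_\good=\CK$ and the lemma is vacuous, so all the content lies in the degenerate case.

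The paper's proof is structurally different and sidesteps the componentwise bookkeeping. From $\{(P+Q)+(P-Q),(P+Q)-(P-Q)\}=\{2P,2Q\}$ one extracts the polynomial implication, valid over $\Z[f_0,\dots,f_8]$ and hence after any specialization: if $\vz\ast\vz'=2B(\vxi,\vxi')$ then $\vd(\vxi)\ast\vd(\vxi')=2B(\vz,\vz')$, where $\vxi\ast\vxi'=\vxi^\top\vxi'+{\vxi'}^\top\vxi$ (the scalar is pinned down at the origin). Since $\vd$ is nonvanishing on $\CK_\good$ by Corollary~\ref{C:closed2}, the right-hand side is nonzero, so $\vz,\vz'\neq 0$ and $\alpha$ is defined at $\{P,Q\}$; and if one of the image points lay outside $\CK_\good$, then Proposition~\ref{P:d=0crit}~(2) (that $\CK\setminus\CK_\good$ is the base locus of $\delta\circ\delta$) together with a bounded number of further iterations of the same implication would force some iterate $\vd^{\circ n}(\vxi)$ or $\vd^{\circ n}(\vxi')$ to vanish, contradicting Corollary~\ref{C:closed2}. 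If you want to repair your approach, you will need this intertwining relation (or an equivalent global input) in any case; the diagonal $y_T$-identities alone do not suffice.
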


\begin{proof}
  Note that generically, $\alpha \circ \alpha = \Sym^2 \delta$; this comes from
  the fact that
  \[ \{(P+Q)+(P-Q), (P+Q)-(P-Q)\} = \{2P, 2Q\} . \]
  If we write $\vxi \ast \vxi'$
  for the symmetric matrix $\vxi^\top\! \cdot \vxi' + {\vxi'}^\top\! \cdot \vxi$,
  then this relation shows that
  \begin{equation} \label{E:Bdimpl}
    \vz \ast \vz' = 2 B(\vxi, \vxi')
      \mathrel{\quad\Longrightarrow\quad} \vd(\vxi) \ast \vd(\vxi') = 2 B(\vz, \vz') ,
  \end{equation}
  up to a scalar factor,
  which we find to be~$1$ by taking $\vxi = \vxi' = (0,\ldots,0,1)$.
  This is then a relation that is valid over~$\Z[f_0,\ldots,f_8]$.

  Now let $\vxi$ and~$\vxi'$ be projective coordinate vectors of points in~$\CK_\good$
  and write $2 B(\vxi, \vxi') = \vz \ast \vz'$ for suitable vectors~$\vz,\vz'$.
  Then by Corollary~\ref{C:closed2}, $\vd(\vxi)$ and~$\vd(\vxi')$ both do
  not vanish, so $\vd(\vxi) \ast \vd(\vxi') \neq 0$. This implies that
  $\vz, \vz' \neq 0$, which shows that $\alpha$ is defined on~$\CK_\good$.
  If the point given by~$\vz \ast \vz'$ were not in~$\Sym^2\CK_\good$,
  then iterating $\alpha$ at most four more times would produce zero
  by Proposition~\ref{P:d=0crit}~(2), contradicting the fact that $\delta$ can be
  iterated indefinitely on the points represented by~$\vxi$ and~$\vxi'$.
\end{proof}


\section{Heights} \label{S:heights}

We now take $k$ to be a number field (or some other field
of characteristic~$\neq 2$ with a collection of absolute values satisfying
the product formula, for example a function field in one variable).
We also assume again that $F \in k[x,z]$ is a squarefree binary octic form.
Then $\CC$ is a curve of genus~$3$ over~$k$, and we have the Jacobian~$\CJ$
and the Kummer variety~$\CK$ associated to~$\CC$.
We define the \emph{naive height} on $\CJ$ and on~$\CK$ to be the standard
height on~$\BP^7$ with respect
to the coordinates $(\xi_1 : \ldots : \xi_8)$. We denote it by
\[ h(P) = \sum_v n_v \log \max\{|\xi_1(P)|_v, \ldots, |\xi_8(P)|_v\}
     \qquad \text{for $P \in \CJ(k)$ or $\CK(k)$,}
\]
where $v$ runs through the places of~$k$, the absolute values $| \cdot |_v$
extend the standard absolute values on~$\Q$ and $n_v = [K_v : \Q_w]$,
where $w$ is the place of~$\Q$ lying below~$v$, so that we have
the product formula
\[ \prod_v |\alpha|_v^{n_v} = 1 \qquad\text{for all $\alpha \in k^\times$.} \]

Then by general theory (see for example \cite[Part~B]{HindrySilverman}) the limit
\[ \hat{h}(P) = \lim_{n \to \infty} \frac{h(nP)}{n^2} \]
exists and differs from~$h(P)$ by a bounded amount. This is the
\emph{canonical height} of~$P$. One of our goals in
this section will be to find an explicit bound for
\[ \beta = \sup_{P \in \CJ(k)} \bigl(h(P) - \hat{h}(P)\bigr) . \]
We refer to~\cite{MueSto} for a detailed study of heights in the
case of Jacobians of curves of genus~$2$, with input from~\cite{StollH1}
and~\cite{StollH2}. We will now proceed to obtain some comparable
results in our case of hyperelliptic genus~$3$ Jacobians.
Most of this is based on the following telescoping series trick
going back to Tate: we write
\[ \hat{h}(P)
    = \lim_{n \to \infty} 4^{-n} h(2^n P)
    = h(P) + \sum_{n=0}^\infty 4^{-(n+1)} \bigl(h(2^{n+1} P) - 4 h(2^n P)\bigr)
\]
and split the term $h(2 P) - 4 h(P)$ into local components as follows:
\[ h(2 P) - 4 h(P)
    = \sum_v n_v \bigl(\max_j \log |\delta_j(\vxi(P))|_v
                     - 4 \max_j \log |\xi_j(P)|_v\bigr)
    = \sum_v n_v \eps_v(P)
\]
with
$\eps_v(P) = \max_j \log |\delta_j(\vxi(P))|_v - 4 \max_j \log |\xi_j(P)|_v$,
which is independent of the scaling of the coordinates~$\vxi(P)$ and so
can be defined for all $P \in \CJ(k_v)$ or $\CK(k_v)$.
Then $\eps_v \colon \CK(k_v) \to \R$ is continuous, so (since $\CK(k_v)$ is compact)
it is bounded.
If $-\gamma_v \le \inf_{P \in \CK(k_v)} \eps_v(P)$, then we have that
\[ \beta \le \sum_v n_v \sum_{n=0}^\infty 4^{-(n+1)} \gamma_v
         = \tfrac{1}{3} \sum_v n_v \gamma_v .
\]
So we will now obtain estimates for~$\gamma_v$. We follow closely the
strategy of~\cite{StollH1}.
Note that writing
\[ \mu_v(P) = \sum_{n=0}^\infty 4^{-(n+1)} \eps_v(2^n P)
            = \lim_{n \to \infty} 4^{-n} \max_j \log |\vd^{\circ n}(\vxi(P))|_v
                - \max_j \log |\xi_j(P)| ,
\]
we also have that
\[ \hat{h}(P) = h(P) + \sum_v n_v \mu_v(P) . \]

We assume that the polynomial defining the curve~$\CC$ has coefficients in
the ring of integers of~$k$. Then the matrices~$M_T$ defined in Section~\ref{S:2tors}
for even $2$-torsion points have entries that are algebraic integers.
We use $\CO$ to denote the ring of all algebraic integers.
Let $\vxi$ be coordinates of a point on~$\CK$.
Then Theorem~\ref{Thm:delta}~(3) tells us that for all even $2$-torsion
points $T \neq 0$, we have that
\[ y_T(\vxi)^2 \in \CO \delta_1(\vxi) + \CO \delta_2(\vxi) + \ldots + \CO \delta_8(\vxi) \]
and Lemma~\ref{L:x2iny} tells us that
(note that the coefficient of $\xi_{9-j}^2$ in~$y_0$ is zero)
\[ \xi_j^2 \in \sum_{T \neq 0, \text{even}} \frac{1}{8 r(T)} \CO y_T(\vxi) . \]

\begin{Lemma} \label{L:nonarch}
  Let $v$ be a non-archimedean place of~$k$. Then for $P \in \CK(k_v)$,
  we have that
  \[ \log |2^6 \disc(F)|_v \le \log \min_T |2^6 r(T)^2|_v \le \eps_v(P) \le 0 , \]
  where $T$ runs through the non-trivial even $2$-torsion points.
\end{Lemma}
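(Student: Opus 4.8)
The plan is to bound $\eps_v(P)$ from above and below separately, using the integrality statements recalled just before the lemma.

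First I would establish the upper bound $\eps_v(P) \le 0$. Choose coordinates $\vxi$ for $P$ that are $v$-integral and \emph{primitive}, i.e.\ $\max_j |\xi_j|_v = 1$; this is possible since $v$ is non-archimedean. Then each $\delta_j(\vxi)$ is a value of a polynomial with coefficients in $\CO$ (Theorem~\ref{Thm:delta}~(1)) evaluated at $v$-integral arguments, so $|\delta_j(\vxi)|_v \le 1$ for all $j$. Hence $\max_j \log|\delta_j(\vxi)|_v \le 0 = 4\max_j\log|\xi_j(P)|_v$, which is exactly $\eps_v(P) \le 0$.

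For the lower bound, keep the same primitive integral coordinates. Since $\max_j|\xi_j|_v = 1$, there is an index $j$ with $|\xi_j|_v = 1$, and then $|\xi_j^2|_v = 1$. By the relation $\xi_j^2 \in \sum_{T\neq 0}\frac{1}{8r(T)}\CO\, y_T(\vxi)$ recalled above, we get $1 = |\xi_j^2|_v \le \max_T |8 r(T)|_v^{-1} \cdot \max_T |y_T(\vxi)|_v$, so $\max_T |y_T(\vxi)|_v \ge \min_T |8 r(T)|_v = |8|_v \min_T|r(T)|_v$. Pick $T$ achieving this maximum; then $|y_T(\vxi)^2|_v \ge |8|_v^2 \min_T|r(T)|_v^2 = |2^6|_v \min_T|r(T)^2|_v$. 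Now apply Theorem~\ref{Thm:delta}~(3): $y_T(\vxi)^2 \in \CO\delta_1(\vxi) + \cdots + \CO\delta_8(\vxi)$, so $|y_T(\vxi)^2|_v \le \max_j|\delta_j(\vxi)|_v$. Combining, $\max_j|\delta_j(\vxi)|_v \ge |2^6 r(T)^2|_v$, which gives $\eps_v(P) = \max_j\log|\delta_j(\vxi)|_v \ge \log|2^6 r(T)^2|_v \ge \log\min_T|2^6 r(T)^2|_v$.

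Finally, for the leftmost inequality I would relate $\min_T |r(T)^2|_v$ to $\disc(F)$. Each $r(T)$ is the resultant $\Res(G,H)$ of the corresponding factorization $F = GH$ into quartics, and $\disc(F)$ is (up to a sign and a power of the leading coefficient, but here with the normalization used in the paper) divisible by $\Res(G,H)^2$ — more precisely $\disc(F) = \disc(G)\disc(H)\Res(G,H)^2$, so $r(T)^2 \mid \disc(F)$ in $\CO$ and hence $|\disc(F)|_v \le |r(T)^2|_v$ for every $T$, giving $|2^6\disc(F)|_v \le \min_T|2^6 r(T)^2|_v$. The main obstacle is getting this last divisibility relation with the correct normalization of the discriminant as used throughout the paper (and making sure no extra factor of $2$ creeps in, which would weaken the stated bound); everything else is a direct application of the integrality facts already in hand.
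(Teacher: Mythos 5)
Your proposal is correct and follows essentially the same route as the paper: the upper bound from integrality of the $\delta_j$, the lower bound by chaining $\xi_j^2 \in \sum_{T}\frac{1}{8r(T)}\CO\, y_T(\vxi)$ with $y_T(\vxi)^2 \in \sum_j \CO\,\delta_j(\vxi)$, and the leftmost inequality from $r(T)^2 \mid \disc(F)$ (which the paper likewise asserts without further elaboration). One cosmetic slip: after picking $T$ to maximize $|y_T(\vxi)|_v$, what you actually obtain is $\max_j|\delta_j(\vxi)|_v \ge |2^6|_v\min_{T'}|r(T')^2|_v$, not $\ge |2^6 r(T)^2|_v$ for that particular $T$; this does not affect the stated conclusion.
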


\begin{proof}
  Let $\vxi$ be coordinates for~$P$ and write
  $d_j = \delta_j(\vxi)$ for $j = 1, \ldots, 8$. Then for all even~$T \neq 0$,
  \[ |y_T(\vxi)|^2_v \le \max_j |d_j|_v \]
  and
  \[ |\xi_j|^4_v \le \max_T |8 r(T)|^{-2}_v |y_T(\vxi)|^2_v
                 \le \max_T |8 r(T)|^{-2}_v \max_j |d_j|_v .
  \]
  So
  \[ \eps_v(P) = \log \max_j |d_j|_v - 4 \log \max_j |\xi_j|_v
               \ge \log \min_T |2^6 r(T)^2|_v .
  \]
  Since $r(T)^2$ divides the discriminant~$\disc(F)$, the first inequality
  on the left also follows. The upper bound follows from the fact that
  the polynomials~$\delta_j$ have integral coefficients.
\end{proof}

Since $\eps_v(P)$ is an integral multiple of the logarithm of the absolute
value of a uniformizer~$\pi_v$,
we can sometimes gain a little bit by using
\[ \eps_v(P) \ge
    -\Bigl\lfloor \max_T v\bigl(|2^6 r(T)^2|\bigr) \Bigr\rfloor \log |\pi_v|_v ,
\]
where $v$ denotes the $v$-adic additive valuation, normalized so that $v(\pi_v) = 1$.

\begin{Example} \label{Ex:C1two}
  For the curve
  \[ y^2 = 4 x^7 - 4 x + 1 \]
  over~$\Q$
  and $v = 2$, the discriminant bound gives\star\ $\eps_2(P) \ge -22 \log 2$, since the
  discriminant of the polynomial on the right hand side (considered as a dehomogenized
  binary octic form) has 2-adic valuation~$16$. To get a better bound, we consider
  the resultants~$r(T)$. If we write
  \[ f(x) = 4 x^7 - 4 x + 1 = 4 g(x) h(x) \]
  with $g$ and~$h$ monic of degree $3$ and~$4$, respectively, then
  $r(T) = 2^8 \Res(g, h)$. From the Newton Polygon of~$f$ we see that all
  roots~$\theta$ of~$f$ satisfy $v_2(\theta) = -2/7$. This gives $v_2(r(T)) \ge 32/7$.
  Since the product of all $35$ resultants~$r(T)$ is the tenth power of the discriminant,
  we must have equality. This gives the bound
  $\eps_2(P) \ge -(15 + \frac{1}{7}) \log 2$,
  which can be improved to $-15 \log 2$, so that we get $-\mu_2 \le 5 \log 2$.
\end{Example}

\begin{Corollary}
  Assume that $k = \Q$. Then we have that
  \[ \beta \le \tfrac{1}{3} \log |2^6 \disc(F)| + \tfrac{1}{3} \gamma_\infty . \]
\end{Corollary}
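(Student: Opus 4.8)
The plan is to combine the non-archimedean local estimate of Lemma~\ref{L:nonarch} with the product formula; over $\Q$ this is essentially immediate. Recall that in the discussion preceding Lemma~\ref{L:nonarch} we already established
\[ \beta \le \tfrac13 \sum_v n_v \gamma_v , \]
where $\gamma_v$ may be taken to be any real number satisfying $-\gamma_v \le \inf_{P \in \CK(k_v)} \eps_v(P)$, and that for $k = \Q$ one has $n_v = 1$ for every place~$v$. So the task reduces to choosing the $\gamma_v$ well and summing.

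First I would treat the finite places. For $v = p$ a prime, Lemma~\ref{L:nonarch} gives $\eps_p(P) \ge \log|2^6\disc(F)|_p$ for all $P \in \CK(\Q_p)$, so we may take $\gamma_p = -\log|2^6\disc(F)|_p$. Since $F$ has coefficients in~$\Z$, the quantity $2^6\disc(F)$ is an integer, hence $|2^6\disc(F)|_p \le 1$, so $\gamma_p \ge 0$, and $\gamma_p = 0$ for all $p \nmid 2\disc(F)$; thus only finitely many terms are nonzero. The product formula applied to $2^6\disc(F) \in \Q^\times$ reads $|2^6\disc(F)| \cdot \prod_p |2^6\disc(F)|_p = 1$, i.e.\ $\sum_p \log|2^6\disc(F)|_p = -\log|2^6\disc(F)|$, whence $\sum_p \gamma_p = \log|2^6\disc(F)|$.

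At the archimedean place I would simply keep $\gamma_\infty$ as in the statement; such a constant exists because $\eps_\infty$ is continuous on the compact space $\CK(\R)$ and hence bounded below. Combining the two contributions,
\[ \beta \le \tfrac13\Bigl(\gamma_\infty + \sum_p \gamma_p\Bigr) = \tfrac13\log|2^6\disc(F)| + \tfrac13\gamma_\infty , \]
which is the asserted bound. I do not expect any real obstacle: the only points that deserve a line of justification are that the integrality of $2^6\disc(F)$ makes each $\gamma_p$ non-negative (so that truncating/rearranging the sum is harmless and the telescoping via the product formula is legitimate), and that the bound $\beta \le \tfrac13\sum_v n_v\gamma_v$ from the preceding discussion is already in hand.
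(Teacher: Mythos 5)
Your proof is correct and is exactly the argument the paper leaves implicit: take $\gamma_p = -\log|2^6\disc(F)|_p$ from Lemma~\ref{L:nonarch}, note that $n_v = 1$ over~$\Q$, and sum via the product formula, using integrality of $2^6\disc(F)$ to see that each $\gamma_p \ge 0$ and only finitely many are nonzero. Nothing is missing.
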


To get a bound on~$\gamma_\infty$, we use the archimedean triangle inequality.
We write $\tau_j(T)$ for the coordinates of a non-trivial even $2$-torsion
point~$T$ (with $\tau_1(T) = 1$) and $\upsilon_j(T)$ for the coefficients
in the formula for~$\xi_j^2$, so that we have
\[ \xi_j^2 = \sum_T \upsilon_j(T) y_T . \]

\begin{Lemma} \label{L:arch}
  Let $v$ be an archimedean place of~$k$. Then we have that
  \[ \gamma_v \le \log \max_j \left(\sum_T |\upsilon_j(T)|_v
                                     \sqrt{\sum_{i=1}^8 |\tau_i(T)|_v} \right)^2 .
  \]
\end{Lemma}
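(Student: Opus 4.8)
The plan is to bound $\eps_v(P) = \max_j \log|\delta_j(\vxi)|_v - 4\max_j\log|\xi_j(P)|_v$ from below at an archimedean place by choosing a convenient normalization of the projective coordinates and then estimating each piece with the triangle inequality. First I would fix coordinates $\vxi$ for $P \in \CK(k_v)$ scaled so that $\max_j |\xi_j|_v = 1$; then the second term vanishes and the task reduces to bounding $\max_j \log|\delta_j(\vxi)|_v$ from below, i.e.\ showing that the $\delta_j(\vxi)$ cannot all be simultaneously too small. By Theorem~\ref{Thm:delta}~(3), for every nontrivial even $2$-torsion point $T$ we have $y_T(\vxi)^2 = \langle \vtau(T), \vd(\vxi)\rangle_S$, where $\vtau(T) = (1, \tau_2(T), \ldots, \tau_8(T))$; hence by the archimedean triangle inequality
\[ |y_T(\vxi)|_v^2 = |\langle \vtau(T), \vd(\vxi)\rangle_S|_v
     \le \Bigl(\sum_{i=1}^8 |\tau_i(T)|_v\Bigr) \cdot \max_j |\delta_j(\vxi)|_v . \]

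Next I would feed this into the expression $\xi_j^2 = \sum_T \upsilon_j(T) y_T(\vxi)$ coming from Lemma~\ref{L:x2iny} (with the $\upsilon_j(T)$ as defined just above the statement, noting that the $y_0$-term drops out because the coefficient of $\xi_{9-j}^2$ in $y_0$ vanishes). Applying the triangle inequality again and then the bound on $|y_T(\vxi)|_v$,
\[ |\xi_j|_v^2 \le \sum_T |\upsilon_j(T)|_v\, |y_T(\vxi)|_v
     \le \sum_T |\upsilon_j(T)|_v \sqrt{\sum_{i=1}^8 |\tau_i(T)|_v}\;\sqrt{\max_k|\delta_k(\vxi)|_v} . \]
Now take the maximum over $j$ on the left; since we normalized so that $\max_j |\xi_j|_v = 1$, the left-hand side is $1$, and squaring the resulting inequality gives
\[ 1 \le \Bigl(\max_j \sum_T |\upsilon_j(T)|_v \sqrt{\textstyle\sum_{i} |\tau_i(T)|_v}\Bigr)^2 \cdot \max_k |\delta_k(\vxi)|_v . \]
Taking logarithms yields $\eps_v(P) = \max_k \log|\delta_k(\vxi)|_v \ge -\log\bigl(\max_j \sum_T |\upsilon_j(T)|_v \sqrt{\sum_i|\tau_i(T)|_v}\bigr)^2$, which is exactly the asserted bound on $-\eps_v(P)$, and hence on $\gamma_v = -\inf_{P}\eps_v(P)$.

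I do not expect any serious obstacle here: this is a routine double application of the ultrametric-free (archimedean) triangle inequality, with the only subtlety being bookkeeping — making sure the normalization $\max_j|\xi_j|_v = 1$ is harmless (it is, since $\eps_v$ is scaling-invariant), that the $y_0$-contribution genuinely drops out of the formula for $\xi_j^2$ (as remarked before Lemma~\ref{L:nonarch}), and that the square roots are real and nonnegative (all $|\tau_i(T)|_v \ge 0$, so $\sum_i |\tau_i(T)|_v \ge |\tau_1(T)|_v = 1 > 0$). If anything requires a word of care it is the passage from the bound on $|\xi_j|_v^2$ for each $j$ to the bound after taking $\max_j$, but this is immediate because the right-hand factor $\sqrt{\max_k|\delta_k(\vxi)|_v}$ does not depend on $j$ and one simply takes the largest of the $j$-dependent prefactors.
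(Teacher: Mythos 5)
Your proof is correct and follows the same approach as the paper: bound $|y_T(\vxi)|_v^2$ via the triangle inequality applied to $\langle\vtau,\vd\rangle_S$ (Theorem~\ref{Thm:delta}~(3)), feed that into the formula $\xi_j^2 = \sum_T \upsilon_j(T)\,y_T$ from Lemma~\ref{L:x2iny}, and combine. The paper's proof is just a terser version of the same two triangle-inequality steps; your extra care about the normalization and the vanishing of the $y_0$-term simply makes explicit what the paper leaves to the reader.
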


\begin{proof}
  Similarly as in the non-archimedean case, we have that
  \[ |y_T(\vxi)|^2_v \le \sum_{j=1}^8 |\tau_j(T)|_v \max_j |d_j|_v \]
  and
  \[ \max_j |\xi_j|^2_v
       \le \max_j \sum_T |\upsilon_j(T)|_v |y_T(\vxi)|_v .
  \]
  Combining these gives the result.
\end{proof}

As in~\cite[Section~16B]{MueSto}, we can refine this result somewhat. Define a function
\[ f \colon \R_{\ge 0}^8 \To \R_{\ge 0}^8, \quad
       (d_1, \ldots, d_8) \longmapsto
           \left(\sqrt{\sum_T |\upsilon_j(T)|_v
                     \sqrt{\sum_{i=1}^8 |\tau_i(T) d_{9-i}|_v}}\right)_{1 \le j \le 8} .
\]
We write $\|(x_1,\ldots,x_8)\|_\infty = \max\{|x_1|, \ldots, |x_8|\}$ for the maximum norm.

\begin{Lemma} \label{L:arch1}
  Define a sequence $(b_n)$ in~$\R_{\ge 0}^8$ by
  \[ b_0 = (1, \ldots, 1) \qquad\text{and}\qquad b_{n+1} = f(b_n) . \]
  The $(b_n)$ converges to a limit~$b$, and we have that
  \[ -\mu_v(P) \le \frac{4^N}{4^N-1} \log \|b_{N}\|_\infty  \]
  for all $N \ge 1$ and all $P \in \CJ(\C)$. In particular, $\sup -\mu_v(\CJ(\C)) \le \log \|b\|_\infty$.
\end{Lemma}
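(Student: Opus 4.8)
The plan is to mimic the proof of the analogous archimedean bound in the genus 2 setting (Lemma in \cite[Section 16B]{MueSto} / \cite[\S16]{MueSto}), exploiting the fact that the map $f$ is built so as to track, coordinate by coordinate, the triangle-inequality estimates from Lemmas \ref{L:arch} and~\ref{L:arch1} for one application of the duplication map. The key relations that make everything work are already recorded in the excerpt: for coordinates $\vxi$ of a point on~$\CK$ and $d_j = \delta_j(\vxi)$ we have $|y_T(\vxi)|_v^2 \le \sum_{i=1}^8 |\tau_i(T)|_v\,\max_j|d_j|_v$ (in fact the sharper coordinate-wise version $|y_T(\vxi)|_v^2 \le \sum_{i=1}^8 |\tau_i(T)\,d_{9-i}|_v$, using $y_T = \xi_8^2 + 2\sum_{j\ge 2}\eps_j\tau_j\xi_{9-j}\xi_8 + (\text{no }\xi_8)$ and that the no-$\xi_8$ part contributes to the lower-index $d_{9-i}$), together with $\xi_j^2 = \sum_T \upsilon_j(T)\,y_T$. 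Composing these two estimates is exactly what $f$ does.

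\textbf{Step 1: $f$ is monotone and subhomogeneous.} First I would record two elementary properties of $f$ that follow directly from its definition: it is monotone increasing in each coordinate (all the constants $|\upsilon_j(T)|_v$, $|\tau_i(T)|_v$ are nonnegative), and it is homogeneous of degree~$1$ under the substitution $d_i \mapsto \lambda^4 d_i$ in the sense that $f(\lambda^4 d_1,\dots,\lambda^4 d_8) = \lambda\, f(d_1,\dots,d_8)$ for $\lambda \ge 0$ --- equivalently, $\log\|f(\cdot)\|_\infty$ scales the $\log$ of the input by $1/4$. More precisely, what matters is that if $(d_1,\dots,d_8) = \vd^{\circ n}(\vxi)$ are the coordinates after $n$ duplications, scaled so that $\|(\xi_1,\dots,\xi_8)\|_\infty = 1$, then $\max_j|\xi_j^{(n+1)}|_v \le \|f(d_1,\dots,d_8)\|_\infty^{?}$; unwinding the scaling this is precisely the statement that $4^{-(n+1)}\max_j\log|\vd^{\circ(n+1)}|_v$ is controlled by applying one step of $f$.

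\textbf{Step 2: the sequence $(b_n)$ converges and dominates.} Starting from any point $P\in\CJ(\C)$ with coordinates normalized to $\|\vxi\|_\infty = 1$, an induction on $n$ using the monotonicity of $f$ shows that the vector of normalized $v$-adic absolute values of the coordinates of $\vd^{\circ n}(\vxi)$ is entrywise $\le b_n$ --- the base case $n=0$ is $\|\vxi\|_\infty\le 1 = \|b_0\|_\infty$ entrywise, and the inductive step is exactly the composite estimate of Step~1 combined with $b_{n+1}=f(b_n)$. That $(b_n)$ converges: since $f$ is continuous, monotone, and $\log\|f(x)\|_\infty \le \tfrac14\log\|x\|_\infty + c$ for a constant $c$ depending only on $F$ and~$v$, the sequence $\log\|b_n\|_\infty$ is eventually bounded above, and a standard monotonicity/compactness argument (or a contraction-type argument on the projectivized coordinates, as in~\cite{MueSto}) gives a limit~$b$. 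Then $\mu_v(P) = \lim_n 4^{-n}\max_j\log|\vd^{\circ n}(\vxi)|_v - \max_j\log|\xi_j|_v$, and the entrywise domination gives $4^{-n}\max_j\log|\vd^{\circ n}(\vxi)|_v \le 4^{-n}\log\|b_{4n}\|_\infty$ (the factor $4$ in the index coming from the fact that each application of $f$ corresponds to halving via the degree-$4$ scaling, which is the bookkeeping one must do carefully); passing to the limit yields $-\mu_v(P) \le \log\|b\|_\infty$, and truncating at stage $N$ and using $\log\|b_{N}\|_\infty \ge (1-4^{-N})\log\|b\|_\infty$ (plus the telescoping $\sum_{n\ge N}4^{-n}$) gives the finite-$N$ bound $-\mu_v(P)\le \tfrac{4^N}{4^N-1}\log\|b_N\|_\infty$.

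\textbf{Main obstacle.} The genuinely delicate point is the bookkeeping in Step~2: relating one application of the \emph{scalar-valued} map $f$ to one application of the \emph{degree-$4$} duplication map $\vd$, and keeping the powers of $4$ straight so that the stated inequality $-\mu_v(P)\le \tfrac{4^N}{4^N-1}\log\|b_N\|_\infty$ comes out with exactly those constants. One must be careful that $f$ as written already has a square root built in (so that it is degree-$1$ homogeneous in a quantity that is itself degree-$4$ in $\vxi$), and that the two triangle inequalities --- one for $y_T$ in terms of the $\delta_j$, one for $\xi_j^2$ in terms of the $y_T$ --- compose to give exactly $f$ and not some coarser bound. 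Convergence of $(b_n)$ itself is then routine given the logarithmic contraction, and the final inequality is a direct consequence of entrywise domination together with the telescoping-series expression for $\mu_v$. I would refer the reader to~\cite{MueSto} for the completely parallel argument in genus~$2$, indicating only the changes needed (the dimension $8$ instead of $4$, and the set of nontrivial even $2$-torsion points indexing the $y_T$).
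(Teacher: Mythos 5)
Your high-level plan (monotonicity of $f$, the $4$-to-$1$ homogeneity $f(\lambda^4 d)=\lambda f(d)$, iteration, convergence, and the telescoping for the finite-$N$ bound) matches what is needed, and the paper itself only gives a one-line proof by reference to~\cite[Lemma~16.1]{MueSto}. But the crucial dominating-sequence step in your Step~2 is formulated in the wrong direction, and this is not a cosmetic issue: the claim ``the vector of absolute values of $\vd^{\circ n}(\vxi)$ is entrywise $\le b_n$, by induction with inductive step $a^{(n+1)}\le f(a^{(n)})\le f(b_n)=b_{n+1}$'' would require the one-step estimate to bound the \emph{doubled} coordinates in terms of the \emph{original} ones, and that is not what $f$ does. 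Reread the proof of Lemma~\ref{L:arch}: the two triangle inequalities bound $|y_T(\vxi)|_v^2$ by the $|\delta_i(\vxi)|_v$ and then $|\xi_j|_v^2$ by the $|y_T(\vxi)|_v$, i.e.\ they bound the coordinates of~$P$ by $f$ of the coordinates of~$2P$. Writing $a^{(n)}$ for the vector of $v$-adic absolute values of $\vd^{\circ n}(\vxi)$, the one-step estimate is $a^{(n)}\le f(a^{(n+1)})$ entrywise, which runs \emph{backwards}. Your ``$b_{4n}$'' is a symptom of this confusion and is not the right index.

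The correct bookkeeping is as follows. Normalize $\|a^{(0)}\|_\infty=1$ and write $a^{(n)}=c_n\,w^{(n)}$ with $\|w^{(n)}\|_\infty=1$. Composing the one-step estimate $n$ times using monotonicity gives $a^{(0)}\le f^{\circ n}(a^{(n)})$. By the homogeneity $f(\lambda^4 x)=\lambda f(x)$ we have $f^{\circ n}(c_n w^{(n)})=c_n^{4^{-n}}f^{\circ n}(w^{(n)})$, and by monotonicity $f^{\circ n}(w^{(n)})\le f^{\circ n}(\mathbf 1)=b_n$. Hence $1\le c_n^{4^{-n}}\|b_n\|_\infty$, i.e.\ $-4^{-n}\log c_n\le\log\|b_n\|_\infty$, which in the limit $n\to\infty$ yields $-\mu_v(P)\le\log\|b\|_\infty$ (and also shows $\log\|b_n\|_\infty$ is bounded below, which together with the contraction-type estimate gives convergence). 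The finite-$N$ inequality then follows from the submultiplicativity $\|b_{n+N}\|_\infty\le\|b_n\|_\infty^{4^{-N}}\|b_N\|_\infty$ (same homogeneity-plus-monotonicity trick applied to $b_{n+N}=f^{\circ N}(b_n)$), giving $\log\|b\|_\infty\le\bigl(1+4^{-N}+4^{-2N}+\cdots\bigr)\log\|b_N\|_\infty=\tfrac{4^N}{4^N-1}\log\|b_N\|_\infty$. So the argument you were aiming for goes through, but only once the inequality $a^{(n)}\le f(a^{(n+1)})$ is used in the correct direction and the degree-$4$ scaling is peeled off via $f^{\circ n}$, not via a forward-dominating recursion on the~$a^{(n)}$.
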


\begin{proof}
  See the proof of~\cite[Lemma~16.1]{MueSto}.
\end{proof}

\begin{Example} \label{Ex:C1infty}
  For the curve
  \[ y^2 = 4 x^7 - 4 x + 1 , \]
  the bound $\gamma_\infty/3$ is $1.15134$, whereas with $N = 8$, we obtain the
  considerably better bound $-\mu_\infty \le 0.51852$.

  We can improve this a little bit more if $k_v = \R$, by making use of the fact that
  the coordinates of the points involved are real, but the $\tau_i(T)$ may be non-real.
  This can give a better bound on
  \[ |y_T^2|_v
      \le \max_{|\delta_i| \le d_i} \left| \sum_{i=1}^8 \eps_i \tau_i(T) \delta_{9-i} \right|_v
      .
  \]
  For the curve above, this improves\star\ the upper bound for~$-\mu_\infty$ to~$0.43829$.
\end{Example}

Now we show that in the most common cases of bad reduction, there is in fact
no contribution to the height difference bound.
This result is similar to~\cite[Proposition~5.2]{StollH2}.

\begin{Lemma} \label{L:regular}
  Let $v$ be a non-archimedean place of~$k$ of odd residue characteristic.
  Assume that the reduction of~$F$ at~$v$ has a simple root
  and that the model of~$\CC$ given by $y^2 = F(x,z)$ is regular at~$v$.
  Then $\mu_v(P) = \eps_v(P) = 0$ for all~$P \in \CJ(k_v)$.
\end{Lemma}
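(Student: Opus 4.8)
The plan is to reduce to showing $\eps_v(P) = 0$ for all $P \in \CK(k_v)$; once we have that, $\mu_v(P) = \sum_n 4^{-(n+1)} \eps_v(2^n P) = 0$ follows immediately, and the restriction to $\CJ(k_v)$ is automatic since $\CK(k_v) \supseteq \kappa(\CJ(k_v))$. By Lemma~\ref{L:nonarch} we already know $\eps_v(P) \le 0$, so the real content is the lower bound $\eps_v(P) \ge 0$, i.e.\ that $\vd$ does not lose any precision at~$v$. The key hypotheses are that $v$ has odd residue characteristic, that the reduction $\bar F$ has a simple root, and that the model $y^2 = F(x,z)$ is regular at~$v$; I expect these to combine to say that the reduction of~$\CK$ modulo~$v$ is still the Kummer variety of the (possibly singular) reduced curve, and that the reduced $\vd$ has no base points on the $k_v$-integral points.

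**Main line of argument.** First I would normalize: scale the coordinates $\vxi$ of $P$ so that they are $v$-integral and at least one is a $v$-unit, so that $\max_j |\xi_j|_v = 1$ and $\eps_v(P) = \max_j \log|\delta_j(\vxi)|_v$. Since the $\delta_j$ have integral coefficients, $\max_j|\delta_j(\vxi)|_v \le 1$; I must show it equals~$1$, equivalently that the reductions $\bar\delta_j(\bar{\vxi})$ do not all vanish. Suppose for contradiction that $\bar{\vd}(\bar{\vxi}) = 0$. Now I would invoke the degeneration results of Section~\ref{S:further}: working over the residue field $\bar k_v$ (which I may assume algebraically closed after base change, as $\eps_v$ only depends on absolute values), Proposition~\ref{P:d=0crit}~(1) says the base locus of~$\delta$ for the reduced form~$\bar F$ is $\bar\CK \cap \bigcup_{a \in A(\bar F)} \CL_a$. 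So $\bar{\vxi}$ lies in $\CL_a$ for some multiple root~$a$ of~$\bar F$. The hypothesis that $\bar F$ has a simple root means $\bar F \neq 0$ and $A(\bar F) \neq \BP^1$, so $a$ is a genuine multiple root; after an integral change of the $(x,z)$-coordinates (using the $\GL(2)$-action of Section~\ref{S:trans}, which is defined over~$\Z$ and respects $v$-integrality since we can move $a$ to~$\infty$ over~$\CO_v$ — here odd residue characteristic and the reduction theory enter) we may assume $a = \infty$, so that $z^2 \mid \bar F$, i.e.\ $v(f_7), v(f_8) > 0$, and $\bar{\vxi} \in \CL_\infty$, i.e.\ $v(\xi_2), v(\xi_3), v(\xi_4), v(\xi_8) > 0$.

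**Deriving the contradiction from regularity.** At this point I would use that the model $y^2 = F(x,z)$ is regular at~$v$ together with $a = \infty$ being a multiple root of~$\bar F$: regularity of the model at the point above $x = \infty$ forces a constraint on the valuations of $f_7, f_8$ — concretely, the singular point of the reduced curve lying over a double root at infinity is a node rather than a worse singularity, which translates (as in the genus~2 analysis of \cite[Prop.~5.2]{StollH2}) into $v(f_7) = 0$ or, more precisely, into the non-vanishing of an appropriate partial derivative. I would then trace through what $\bar{\vxi} \in \CL_\infty$ together with the defining equations of~$\bar\CK$ and the regularity constraint imply: the point $P$ would have to reduce to a point that does \emph{not} in fact lie on the reduced Kummer variety, or the curve would have to be non-regular at~$v$, giving the contradiction. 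Thus $\eps_v(P) \ge 0$, hence $= 0$, and then $\mu_v(P) = 0$ as noted. The main obstacle is the last step: pinning down exactly how regularity of the Weierstrass model translates into a valuation inequality on the $f_i$ that is incompatible with $\bar{\vxi} \in \CL_a$ and $a$ a multiple root — this requires a careful local computation at the relevant point of the arithmetic surface, and is the analogue of the genus-2 argument in \cite[Prop.~5.2]{StollH2} that must be redone in the present genus-3 coordinates. Everything else (the reduction to $\CK(k_v)$, the use of Proposition~\ref{P:d=0crit}, the integral $\GL(2)$-normalization) is routine once that computation is in place.
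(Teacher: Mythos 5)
Your framework matches the paper's: normalize $\vxi$ integrally with a unit entry, note $\eps_v(P)\le 0$ from integrality of the~$\delta_j$, suppose $\eps_v(P)\neq 0$, apply Proposition~\ref{P:d=0crit} to place the reduction in some~$\CL_a$ with $a$ a multiple root of~$\bar F$, and bring in regularity. But there are two substantive problems.

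First, you do not actually use the hypothesis that $\bar F$ has a \emph{simple} root, and your normalization is awkward as a result. The paper uses the simple root crucially: by Hensel's lemma it lifts to a $K$-rational root of~$F$ (with $K/k_v$ unramified), so one can move \emph{that} root to~$\infty$ and arrange $f_8=0$, $f_7=1$ \emph{exactly}; the multiple root~$a$ is then moved to~$0$, so the $\CL_a$ constraint becomes $v(\xi_4), v(\xi_6), v(\xi_7), v(\xi_8)>0$, and regularity at $(0:0:1)$ gives $v(f_0)=1$, $v(f_1)\ge 1$. Your choice (moving~$a$ to~$\infty$) is the mirror image, but your statement that regularity forces ``$v(f_7)=0$'' is inconsistent with your own setup (if $\infty$ is a multiple root of~$\bar F$ then $v(f_7)>0$); the correct mirror conclusion would be $v(f_8)=1$, $v(f_7)\ge 1$.

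Second, and more seriously, the heart of the proof is missing, and the hand-waving in your last paragraph does not identify the mechanism. The contradiction is \emph{not} that $P$ reduces off~$\bar\CK$ or that the model is non-regular; rather, it is a \emph{parity-of-valuation} argument using odd functions on~$\CJ$. One produces odd functions (in the paper: the minor $\mu_{034}\in L(3\Theta)$ if $v(\xi_1)=0$, and $\nu_1,\nu_2,\nu_3\in L(4\Theta)$ if $v(\xi_1)>0$) defined over~$k$ whose values at~$P$ lie in~$K$, and whose squares can be written as explicit polynomials in the~$\xi_j$ and~$f_i$. Under the valuation constraints just deduced, the paper computes e.g.\ $\mu_{034}^2 = f_0 + (f_6-\xi_2)\xi_4^2 - \xi_6\xi_4$, whose $v$-valuation is exactly~$1$ (odd), whereas $\mu_{034}(P)^2$ must have even valuation since $K/k_v$ is unramified. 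The case split on $v(\xi_1)$ and the specific choice of which odd functions have squares with odd valuation is essential content that your proposal leaves to a ``careful local computation'' without indicating the idea that makes that computation produce a contradiction. Without this, the proof does not close.
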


Note that the assumptions on the model are satisfied when $v(\disc(F)) = 1$.

\begin{proof}
  We work with a suitable unramified extension~$K$ of~$k_v$, so that
  the reduction~$\bar{F}$ of~$F$ splits into linear factors over the residue field.
  We denote the ring of integers of~$K$ by~$\CO$.
  By assumption, $\bar{F}$ has a simple root, which by Hensel's Lemma lifts
  to a root of~$F$ in~$\BP^1(K)$. We can use a transformation
  defined over~$\CO$ to move this root of~$F$ to~$\infty$.
  Then we have $f_8 = 0$ and $v(f_7) = 0$.
  We can further scale~$F$ (at the cost of at most a further quadratic unramified extension)
  so that $f_7 = 1$.

  Assume that $P \in \CJ(K)$ has $\eps_v(P) \neq 0$ and let $\vxi$ be
  normalized coordinates for $\kappa(P) \in \CK(K)$ (i.e., such that the
  coordinates are in~$\CO$ and at least one of them is in~$\CO^\times$).
  By Proposition~\ref{P:d=0crit}, the reduction of~$P$ must lie in some~$\CL_a$
  where $a \neq \infty$ is a multiple root of~$\bar{F}$. We can shift $a$ to~$0$;
  then the coordinates $\xi_4$, $\xi_6$, $\xi_7$ and~$\xi_8$ have positive valuation.
  We also have $v(f_0) = 1$
  (this is because the model is regular at the point $(0:0:1)$ in the reduction)
  and $v(f_1) \ge 1$ (since $a = 0$ is a multiple root of~$\bar{F}$).

  Now assume first that $v(\xi_1) = 0$; then we can scale~$\vxi$ such that $\xi_1 = 1$.
  We consider the quantity~$\mu_{034}$ introduced in Section~\ref{S:lift};
  its value on~$P$ is in~$K$.
  By~\eqref{E:muijk}, we have that
  \[ \mu_{034}^2 = \eta_{00} \eta_{34}^2 + \eta_{33} \eta_{04}^2 + \eta_{44} \eta_{03}^2
                   - 4 \eta_{00} \eta_{33} \eta_{44} - \eta_{03} \eta_{04} \eta_{34} \\
                 = f_0 + (f_6 - \xi_2) \xi_4^2 - \xi_6 \xi_4
  \]
  (note that $\eta_{44} = f_8 = 0$, $\eta_{34} = f_7 = 1$,
  $\eta_{33} = f_6 - \eta_{24}$, $\eta_{24} = \xi_2$, $\eta_{04} = \xi_4$,
  $\eta_{03} = \xi_6$). Now since $v(f_0) = 1$ and $v(\xi_4) \ge 1$, $v(\xi_6) \ge 1$,
  we find that $2 v(\mu_{034}) = 1$, a contradiction.

  So we must have $v(\xi_1) > 0$. One can check\star\ that
  \begin{align*}
    \nu_1 &= (\xi_4 - \xi_5) \mu_{013} + \xi_7 \mu_{123} \\
    \nu_2 &= \xi_3 \mu_{014} - \xi_4 \mu_{024} \\
    \nu_3 &= \xi_2 \mu_{024} - \xi_4 \mu_{134}
  \end{align*}
  are functions in~$L(4 \Theta)$,  which are clearly odd, so their squares can be written as
  quartics in the~$\xi_j$ by Lemma~\ref{L:Mumford_even}.
  Let $I$ be the square of the ideal generated by $f_0,f_1,\xi_1,\xi_4,\xi_6,\xi_7,\xi_8$;
  then anything in~$I$ has valuation at least~$2$. We find\star\ that modulo~$I$,
  \[ \nu_1^2 \equiv f_0 \xi_5^4, \qquad
     \nu_2^2 \equiv f_0 \xi_3^4, \qquad
     \nu_3^2 \equiv f_0 \xi_2^4 .
  \]
  Since (at least) one of $\xi_2$, $\xi_3$, $\xi_5$ is a unit and $v(f_0) = 1$,
  we obtain a contradiction again.

  Therefore $\eps_v(P) = 0$ for all $P \in \CJ(K)$, which implies that
  $\mu_v(P) = 0$ as well.
\end{proof}

\begin{Example} \label{Ex:diff}
  The discriminant of the curve
  \[ \CC \colon y^2 = 4 x^7 - 4 x + 1 \]
  is\star\ $2^{28} \cdot 19 \cdot 223 \cdot 44909$. Lemma~\ref{L:regular} now implies
  that $\eps_v(P) = 0$ for all $P \in \CJ(\Q_v)$ for all places~$v$ except
  $2$ and~$\infty$, including the bad primes $19$, $223$ and~$44909$.
  So, using Examples \ref{Ex:C1two} and~\ref{Ex:C1infty}, we obtain the bound
  \[ h(P) \le \hat{h}(P) + 5 \log 2 + 0.43829 \le \hat{h}(P) + 3.90403 \]
  for all $P \in \CJ(\Q)$.
\end{Example}

To compute the canonical height $\hat{h}(P)$ for some point $P \in \CJ(\Q)$
(say, for a hyperelliptic curve~$\CC$ of genus~$3$ defined over~$\Q$),
we can use any of the approaches described in~\cite{MueSto}, except the most
efficient one (building on Proposition~14.3 in loc.~cit.), since we have so far
no general bound on the denominator of $\mu_p/\log p$ in terms of the discriminant.
A little bit of care is needed, since contrary to the genus~$2$ situation,
$\eps_v = 0$ and~$\mu_v = 0$ are not necessarily equivalent ---
there can be a difference when the reduction of~$F$ is a constant times a square ---
so the criterion for a point to be in the subgroup on which $\mu_v = 0$
has to be taken as $\overline{\kappa(P)} \in \CK_\good(\F)$, where $\overline{\kappa(P)}$
is the reduction of~$\kappa(P)$ at~$v$ and $\F$ is the residue class field.

We can describe the subset on which $\mu_v = 0$ and show that it is a subgroup
and that $\mu_v$ factors through the quotient.

\begin{Theorem}
  Let $v$ be a non-archimedean place of~$k$ of odd residue characteristic.
  Write $\CJ(k_v)_\good$ for the subset of~$\CJ(k_v)$ consisting of the points~$P$
  such that $\kappa(P)$ reduces to a point in~$\CK_\good(\F)$.
  Then $\CJ(k_v)_\good = \{P \in \CJ(k_v) : \mu_v(P) = 0\}$
  is a subgroup of finite index of~$\CJ(k_v)$, and
  $\eps_v$ and~$\mu_v$ factor through the quotient $\CJ(k_v)/\CJ(k_v)_\good$.
\end{Theorem}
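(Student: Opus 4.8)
The plan is to identify $\CJ(k_v)_\good$ with the zero set of $\mu_v$, then establish the group‑theoretic properties, and finally show that $\mu_v$ (hence $\eps_v$) is constant on cosets of this subgroup; the last part is where the real work lies.

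\emph{Identification with $\{\mu_v=0\}$.} Choosing $v$‑integral and primitive coordinates $\vxi$ for $\kappa(P)$, and using that the $\delta_j$ have coefficients in $\Z[f_0,\dots,f_8]$, we get $\eps_v(P)=\log\max_j|\delta_j(\vxi)|_v\le 0$, with equality exactly when the reductions $\bar\delta_j$ do not all vanish at $\overline{\kappa(P)}$, i.e.\ when $\overline{\kappa(P)}$ is not a base point of $\delta$ over the residue field~$\F$. By Proposition~\ref{P:d=0crit}~(2), $\CK_\good(\F)$ is the complement of the base locus of $\delta\circ\delta$, so $\overline{\kappa(P)}\in\CK_\good(\F)$ is equivalent to $\eps_v(P)=\eps_v(2P)=0$; and when this holds, Corollary~\ref{C:closed2} (valid over~$\F$ since all relations live over $\Z[f_0,\dots,f_8]$) shows $\overline{\kappa(2^nP)}=\delta^{\,\circ n}(\overline{\kappa(P)})\in\CK_\good(\F)$ for all~$n$, hence $\eps_v(2^nP)=0$ and $\mu_v(P)=0$. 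Conversely $\mu_v(P)=\sum_{n\ge 0}4^{-(n+1)}\eps_v(2^nP)=0$ with all summands $\le 0$ forces each $\eps_v(2^nP)=0$, so $\overline{\kappa(P)}\in\CK_\good(\F)$.

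\emph{Subgroup and finite index.} Here $o=\kappa(O)=(0{:}\cdots{:}0{:}1)$ satisfies $\langle\kappa(T),o\rangle_S=1\neq 0$ for all $T\in\CT(F)$ (reduced mod~$v$), so $O\in\CJ(k_v)_\good$. If $P,Q\in\CJ(k_v)_\good$, then $\{\overline{\kappa(P)},\overline{\kappa(Q)}\}\in\Sym^2\CK_\good(\F)$; reducing $\kappa(P+Q)\ast\kappa(P-Q)=2B(\kappa(P),\kappa(Q))$ (the matrix~$B$ has $v$‑integral entries as $v$ is odd) and using that $\alpha$ restricts to a morphism $\Sym^2\CK_\good\to\Sym^2\CK_\good$ over~$\F$ by Lemma~\ref{L:closed+}, we get $\{\overline{\kappa(P+Q)},\overline{\kappa(P-Q)}\}=\alpha(\{\overline{\kappa(P)},\overline{\kappa(Q)}\})\in\Sym^2\CK_\good(\F)$; as both components are $\F$‑rational, $P\pm Q\in\CJ(k_v)_\good$. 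Taking $P=O$ gives closure under negation, so $\CJ(k_v)_\good$ is a subgroup; it is open because $P\mapsto\overline{\kappa(P)}$ is continuous into the finite discrete set $\CK(\F)$, and an open subgroup of the compact group $\CJ(k_v)$ has finite index.

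\emph{The factorization (main obstacle).} Following the strategy of~\cite{StollH1} and~\cite{MueSto} for genus~$2$, the matrix~$B$, the relation~\eqref{E:Bdimpl}, and Lemma~\ref{L:closed+} yield, after the standard telescoping over the iterates of the add‑and‑subtract map~$\alpha$, an identity
\[
  \mu_v(\kappa(P+R))+\mu_v(\kappa(P-R))=2\mu_v(\kappa(P))+2\mu_v(\kappa(R))-\tilde\mu_v(P,R)
\]
for all $P,R\in\CJ(k_v)$, where $\tilde\mu_v(P,R)\ge 0$ is a sum of local defects of~$\alpha$, and $\tilde\mu_v(P,R)>0$ precisely when the forward orbit of $\{\overline{\kappa(P)},\overline{\kappa(R)}\}$ under~$\alpha$ over~$\F$ meets the base locus of~$\alpha$ — a condition depending only on the reductions, on which moreover $\tilde\mu_v$ has a positive lower bound. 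Now fix $R\in\CJ(k_v)_\good$ (so $\mu_v(R)=0$) and any~$P$, and set $a_k=\mu_v(\kappa(P+kR))$. The identity becomes $a_{k+1}-2a_k+a_{k-1}=-\tilde\mu_v(P+kR,R)\le 0$, so $\sum_{k=0}^{N-1}\tilde\mu_v(P+kR,R)=(a_{N-1}-a_{-1})-(a_N-a_0)$ stays bounded as $N\to\infty$, since $\mu_v$ is bounded. But $\overline{\kappa(P+kR)}$ is periodic in~$k$ (the reduction of~$R$ has finite order), so if $\tilde\mu_v(P,R)$ were positive it would be bounded below by a fixed positive constant for all~$k$ in a residue class mod the period, forcing the partial sums to grow linearly — a contradiction. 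Hence $\tilde\mu_v(P,R)=0$, i.e.\ $\mu_v(\kappa(P+R))+\mu_v(\kappa(P-R))=2\mu_v(\kappa(P))$ for all $P\in\CJ(k_v)$ and $R\in\CJ(k_v)_\good$. Fixing $P_0$ and putting $h(r)=\mu_v(\kappa(P_0+r))-\mu_v(\kappa(P_0))$ on $\CJ(k_v)_\good$, this is Jensen's functional equation; with $h(-r)=-h(r)$ it gives $h(r-r')=h(r)-h(r')$, so $h$ is a bounded homomorphism $\CJ(k_v)_\good\to(\R,+)$ and vanishes identically. Thus $\mu_v\circ\kappa$ is constant on cosets of $\CJ(k_v)_\good$, and since $\eps_v(\kappa(P))=4\mu_v(\kappa(P))-\mu_v(\kappa(2P))$ and $2\CJ(k_v)_\good\subseteq\CJ(k_v)_\good$, the same holds for $\eps_v$. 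The hardest ingredients are the precise bookkeeping behind the displayed identity (the local‑height analysis of~$\alpha$ via~$B$) and the periodicity argument that forces $\tilde\mu_v$ to vanish on the good subgroup.
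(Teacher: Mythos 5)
Your overall plan mirrors the paper's: the identification with $\{\mu_v=0\}$ via Proposition~\ref{P:d=0crit} and Corollary~\ref{C:closed2}, the subgroup and finite-index statements via Lemma~\ref{L:closed+} and openness, and the factorization via the bi-quadratic relation~\eqref{E:Bdimpl}. Your first two paragraphs are correct and make explicit what the paper leaves to ``the results of Section~\ref{S:further}''. For the factorization the paper sets up the two-variable local function $\eps_v(P,P')$, invokes $|\vz\ast\vz'|_v=|\vz|_v\,|\vz'|_v$ at odd residue characteristic, and delegates the rest to~\cite[Lemma~3.7]{MueSto}; you instead try to carry it out from scratch, which is the right instinct, but the execution has two problems.

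\emph{Sign and description of the correction term.} Because $B$ has $v$-integral entries and $|2|_v=1$, the telescoping of~\eqref{E:Bdimpl} gives the exact identity
\[
  \mu_v(P+P')+\mu_v(P-P')=2\mu_v(P)+2\mu_v(P')-\eps_v(P,P'),
  \qquad
  \eps_v(P,P')=\log|2B(\vxi,\vxi')|_v-2\log|\vxi|_v-2\log|\vxi'|_v\le 0 .
\]
Thus your $\tilde\mu_v=\eps_v(P,P')$ is \emph{nonpositive}, not nonnegative; with the identity as you wrote it your sign is wrong. Moreover $\tilde\mu_v$ is not a ``sum of local defects over the forward orbit of~$\alpha$'' — only the $n=0$ defect survives the telescope, since the contributions from $n\ge 1$ are absorbed into the four $\mu_v$'s. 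With the correct sign, the sequence $a_k=\mu_v(P+kR)$ (for $R\in\CJ(k_v)_\good$) satisfies $a_{k+1}-2a_k+a_{k-1}=-\eps_v(P+kR,R)\ge 0$, i.e.\ is \emph{convex}, not concave.

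\emph{The periodicity claim is a genuine gap.} You assert that $\overline{\kappa(P+kR)}$ is periodic in~$k$ because ``the reduction of~$R$ has finite order''. But the map $P\mapsto\overline{\kappa(P)}\in\CK(\F)$ is a priori only a continuous map to a finite set; it is not automatic that it factors through a finite quotient group of~$\CJ(k_v)$. One would need, for instance, that $\kappa$ is compatible with reduction on the N\'eron model, or a separate argument via~$B$ showing that $\overline{\kappa(\cdot)}$ is constant on cosets of $\{P:\overline{\kappa(P)}=o\}$ — neither of which is supplied. As written this step does not stand.

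Happily, the periodicity detour (and the Jensen's-equation packaging that follows it) is unnecessary once the sign is corrected. A bounded convex function $a\colon\Z\to\R$ is constant: the differences $b_k=a_{k+1}-a_k$ are nondecreasing, and $b_{k_0}>0$ forces $a_k\to+\infty$ as $k\to+\infty$, while $b_{k_0}<0$ forces $a_k\to+\infty$ as $k\to-\infty$, both impossible. Since $\mu_v$ is bounded, $(a_k)$ is constant; taking $k=1$ gives $\mu_v(P+R)=\mu_v(P)$ directly, and $\eps_v(P,R)=0$ falls out too. This elementary observation is the core that, in some form, the cited \cite[Lemma~3.7]{MueSto} supplies; you should replace your periodicity argument with it.
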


\begin{proof}
  That $\CJ(k_v)_\good$ is a group follows from Lemma~\ref{L:closed+}:
  If $P_1$ and~$P_2$ are in~$\CJ(k_v)_\good$, then $P_1 \pm P_2$ reduce
  to a point in~$\CK_\good$ as well. This subgroup contains the kernel
  of reduction, which is of finite index, so it is itself of finite index.
  That $\CJ(k_v)_\good = \{P \in \CJ(k_v) : \mu_v(P) = 0\}$ follows from
  the results of Section~\ref{S:further}.

  It remains to show that $\mu_v$ (and therefore also $\eps_v$, since
  $\eps_v(P) = 4 \mu_v(P) - \mu_v(2P)$) factors through the quotient group.
  Let $P, P' \in \CJ(k_v)$ and let $\vxi$ and~$\vxi'$ be coordinate vectors
  for $\kappa(P)$ and~$\kappa(P')$, respectively. We can then choose
  coordinate vectors $\vz$ and~$\vz'$ for $\kappa(P'+P)$ and~$\kappa(P'-P)$, respectively,
  such that $\vz \ast \vz' = 2 B(\vxi, \vxi')$.
  Iterating the implication in~\eqref{E:Bdimpl} then gives
  \[ \vd(\vz) \ast \vd(\vz') = 2 B\bigl(\vd(\xi), \vd(\xi')\bigl) , \]
  and we can iterate this relation further. If $\underline{\alpha}$ is a vector
  or matrix, then we write $|\underline{\alpha}|_v$ for the maximum
  of the $v$-adic absolute values of the entries of~$\alpha$.
  Define
  \[ \eps_v(P,P') = \log |2B(\vxi,\vxi')|_v - 2 \log |\vxi|_v - 2 \log |\vxi'|_v \]
  (this does not depend on the scaling of the coordinate vectors) and note
  that $|\vz \ast \vz'|_v = |\vz|_v \cdot |\vz'|_v$ (here we use that the
  residue characteristic is odd).
  We then see that $\mu_v(P) = 0$ implies $\mu_v(P+Q) = \mu_v(Q)$ for
  all~$Q \in \CJ(k_v)$ in the same way as in the proof of~\cite[Lemma~3.7]{MueSto}.
\end{proof}


\section{An application} \label{S:example}

We consider the curve
\[ \CC' \colon y^2 - y = x^7 - x , \]
which is isomorphic to the curve
\[ \CC \colon y^2 = 4 x^7 - 4 x + 1 , \]
which we have been using as our running example.
Our results can now be used to determine a set of generators for the
Mordell-Weil group~$\CJ(\Q)$. This is the key ingredient for the
method that determines the set of integral points on a hyperelliptic
curve as in~\cite{BMSST}. We carry out the necessary computations and
thence find all the integral solutions of the equation $y^2 - y = x^7 - x$.

A 2-descent on the Jacobian~$\CJ$ of~$\CC$ as described in~\cite{StollDesc}
and implemented in Magma~\cite{Magma} shows that the rank of~$\CJ(\Q)$
is at most~$4$. We have $\#\CJ(\F_3) = 94$ and $\#\CJ(\F_7) = 911$,
which implies that $\CJ(\Q)$ is torsion free (the torsion subgroup injects
into~$\CJ(\F_p)$ for $p$ an odd prime of good reduction). We have the
obvious points $(0, \pm 1)$, $(\pm 1, \pm 1)$, $(\pm \omega, \pm 1)$, $(\pm \omega^2, \pm 1)$
on~$\CC$, where $\omega$ denotes a primitive cube root of unity, together
with the point at infinity. We can check that the rational divisors of
degree zero on~$\CC$ supported in these points generate a subgroup~$G$ of~$\CJ(\Q)$
of rank~$4$, which already shows that $\CJ(\Q) \cong \Z^4$.
Computing canonical heights, either with an approach as in~\cite{MueSto}
or with the more general algorithms due independently to Holmes~\cite{Holmes}
and M\"uller~\cite{Mueller}, we find that an LLL-reduced basis of the
lattice $(G, \hat{h})$ is given by
\begin{gather*}
   P_1 = [(0,1) - \infty], \quad
   P_2 = [(1,1) - \infty], \quad
   P_3 = [(-1,1) - \infty], \\
   P_4 = [(1,-1) + (\omega,-1) + (\omega^2, -1) - 3 \cdot\infty]
\end{gather*}
with height pairing matrix
\[ M \approx  \begin{pmatrix}
                  0.17820 &  0.01340 & -0.05683 &  0.08269 \\
                  0.01340 &  0.81995 & -0.34461 & -0.26775 \\
                -0.05683 & -0.34461 &  0.98526 &  0.37358 \\
                  0.08269 & -0.26775 &  0.37358 &  1.07765
              \end{pmatrix} .
\]
We can bound the covering radius~$\rho$ of this lattice by
$\rho^2 \le 0.50752$. Using Example~\ref{Ex:diff},
it follows that if $G \neq \CJ(\Q)$, then there
must be a point $P \in \CJ(\Q) \setminus G$ satisfying
\[ h(P) \le \rho^2 + \beta \le 0.50752 + 3.90403 = 4.41155 , \]
so that we can write $\kappa(P) = (\xi_1 : \xi_2 : \ldots : \xi_8) \in \CK(\Q)$
with coprime integers~$\xi_j$
such that $|\xi_j| \le \lfloor e^{4.41155} \rfloor = 82$.
We can enumerate all points in~$\CK(\Q)$ up to this height bound and check
that no such point lifts to a point in~$\CJ(\Q)$ that is not in~$G$.
(Compare \cite[\S7]{StollH2} for this approach to determining the Mordell-Weil group.)
We have therefore proved the following.

\begin{Proposition} \label{P:ExMW}
  The group $\CJ(\Q)$ is free abelian of rank~$4$, generated by the points
  $P_1$, $P_2$, $P_3$ and~$P_4$.
\end{Proposition}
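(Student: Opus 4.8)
The plan is to combine three inputs: the upper bound on the rank from a $2$-descent, the explicit height difference bound of Example~\ref{Ex:diff}, and a bounded search on the Kummer variety~$\CK$. First I would pin down the isomorphism type of $\CJ(\Q)$. The $2$-descent of~\cite{StollDesc} gives that the rank of $\CJ(\Q)$ is at most~$4$. The orders $\#\CJ(\F_3) = 94 = 2\cdot 47$ and $\#\CJ(\F_7) = 911$ (a prime) are coprime, and since the torsion subgroup of $\CJ(\Q)$ injects into $\CJ(\F_p)$ for every odd prime~$p$ of good reduction, it must be trivial. Finally, the rational divisors of degree~$0$ supported in the obvious rational points of~$\CC'$ (the points $(0,\pm1)$, $(\pm1,\pm1)$, $(\pm\omega,\pm1)$, $(\pm\omega^2,\pm1)$ and the point at infinity) span a subgroup~$G$ of rank~$4$, as one checks by computing the height pairing matrix~$M$ and verifying it is non-degenerate. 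Hence $\CJ(\Q)\cong\Z^4$ and $G$ has finite index in $\CJ(\Q)$.

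Next I would bound the naive height of any point of $\CJ(\Q)$ not already lying in~$G$. From the LLL-reduced basis $P_1,\dots,P_4$ and the Gram matrix~$M$ one bounds the covering radius~$\rho$ of the lattice $(G,\hat h)$, e.g.\ by $\rho^2 \le \tfrac14\sum_i\|b_i^*\|^2$ for the Gram--Schmidt vectors, or by a short-vector enumeration, obtaining $\rho^2 \le 0.50752$. For any $P\in\CJ(\Q)$ there is then $P_0\in G$ with $\hat h(P-P_0)\le\rho^2$, and since $P-P_0\notin G$ exactly when $P\notin G$, the height difference bound $h\le\hat h+\beta$ with $\beta\le 3.90403$ from Example~\ref{Ex:diff} shows that, if $G\neq\CJ(\Q)$, there is a point $P'\in\CJ(\Q)\setminus G$ with $h(P')\le\rho^2+\beta\le 4.41155$. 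Writing $\kappa(P')$ with coprime integer coordinates $(\xi_1:\dots:\xi_8)$, this means $|\xi_j|\le\lfloor e^{4.41155}\rfloor = 82$ for all~$j$.

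Finally I would carry out the search and conclude. Using the quadric~\eqref{quadrel} and the $34$~quartics of Theorem~\ref{T:Kummerrel} as a sieve, one enumerates all primitive integer points $\vxi$ on~$\CK$ with $|\xi_j|\le 82$; for each one the lifting criterion of Section~\ref{S:lift} (squareness of the $\mu_{ijk}^2$, plus the explicit description of the image of~$\Theta$ when $\xi_1 = 0$) decides whether it is the image of a rational point of~$\CJ$, and for the lifts one tests membership in~$G$ via Mumford representations or canonical heights. If no point of $\CJ(\Q)\setminus G$ turns up, then $\CJ(\Q) = G$, and since $P_1,\dots,P_4$ is a $\Z$-basis of~$G$ the proposition follows. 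The main obstacle is the feasibility of the search: a naive loop over $83^8$ tuples is hopeless, so the enumeration must be organized by fixing a few of the (weighted) coordinates and solving the remaining Kummer equations, or by working prime by prime, exactly as in~\cite[\S7]{StollH2}; that reference also describes how to organize the lifting checks efficiently. Getting $\rho^2\le 0.50752$ from~$M$ is comparatively routine, and the rank and torsion statements are immediate from the $2$-descent output and the point counts.
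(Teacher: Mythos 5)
Your proposal is correct and follows essentially the same route as the paper: rank bound from the 2-descent, triviality of torsion from the coprime orders $\#\CJ(\F_3)=94$ and $\#\CJ(\F_7)=911$, the covering-radius argument combined with the bound $\beta\le 3.90403$ of Example~\ref{Ex:diff} to reduce to naive height at most $4.41155$ (coordinates bounded by $82$), and a finite search on $\CK(\Q)$ with the lifting test of Section~\ref{S:lift}, organized as in~\cite[\S7]{StollH2}. No substantive differences.
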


A Mordell-Weil sieve computation as described in~\cite{BruinStollMWS} shows
that any unknown rational point on~$\CC$ must differ from one of the eleven
known points
\[ \infty,\; (-1, \pm 1),\; (0, \pm 1),\; (\tfrac{1}{4}, \pm\tfrac{1}{64}),\;
   (1, \pm 1),\; (5, \pm 559)
\]
by an element of $B \cdot \CJ(\Q)$, where
\[ B = 2^6 \cdot 3^3 \cdot 5^3 \cdot 7^2 \cdot 11 \cdot 13 \cdot 17 \cdot 19 \cdot 23
          \cdot 29 \cdot 31 \cdot 37 \cdot 43 \cdot 47 \cdot 53 \cdot 61 \cdot 71 \cdot 79
          \cdot 83 \cdot 97
     \approx 1.1 \cdot 10^{32} .
\]
In particular, we know that every rational point is in the same coset
modulo~$2 \CJ(\Q)$ as one of the known points. For each of these cosets
(there are five such cosets: the points with $x$-coordinate~$1/4$ are
in the same coset as those with $x$-coordinate~$0$),
we compute a bound for the size of the $x$-coordinate of an integral
point on~$\CC$ with the method given in~\cite{BMSST}. This shows that
\[ \log |x| \le 2 \cdot 10^{1229} \]
for any such point~$(x,y)$. On the other hand, using the second stage of
the Mordell-Weil sieve as explained in~\cite{BMSST}, we obtain a lattice
$L \subset \Z^4$ of index \hbox{$\approx 2.3 \cdot 10^{2505}$} such that the minimal
squared euclidean length of a nonzero element of~$L$ is $\approx 2.55 \cdot 10^{1252}$
and such that every rational point on~$\CC$ differs from one of the known
points by an element in the image of~$L$ in~$\CJ(\Q)$ under the isomorphism
$\Z^4 \stackrel{\cong}{\to} \CJ(\Q)$ given by the basis above. This is more
than sufficient to produce a contradiction to the assumption that there
is an integral point we do not already know. We have therefore proved:

\begin{Theorem}
  The only points in~$\CC(\Q)$ with integral $x$-coordinate are
  \[ (-1, \pm 1),\; (0, \pm 1),\; (1, \pm 1),\; (5, \pm 559) \;. \]
  In particular, the only integral solutions of the equation
  \[ y^2 - y = x^7 - x \]
  are $(x,y) = (-1, 0)$, $(-1, 1)$, $(0, 0)$, $(0, 1)$, $(1, 0)$, $(1, 1)$,
  $(5, 280)$ and~$(5, -279)$.
\end{Theorem}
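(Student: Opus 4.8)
The plan is to combine the explicit Mordell--Weil group of Proposition~\ref{P:ExMW} with a two-stage Mordell--Weil sieve and the linear forms in logarithms machinery of~\cite{BMSST}. By Proposition~\ref{P:ExMW} we have $\CJ(\Q) = \Z P_1 \oplus \Z P_2 \oplus \Z P_3 \oplus \Z P_4$, a conclusion that itself rests on the height difference bound $\beta \le 3.90403$ of Example~\ref{Ex:diff} (built from Example~\ref{Ex:C1two} at $v = 2$, Example~\ref{Ex:C1infty} at $v = \infty$, and Lemma~\ref{L:regular} at the remaining bad primes $19$, $223$, $44909$). On~$\CC$ we have the eleven known rational points $\infty$, $(-1,\pm1)$, $(0,\pm1)$, $(\tfrac14,\pm\tfrac1{64})$, $(1,\pm1)$, $(5,\pm559)$, all but two of which have integral $x$-coordinate.

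First I would run a Mordell--Weil sieve as in~\cite{BruinStollMWS}: comparing, for a suitable set of primes~$p$ and a highly divisible modulus~$N$, the image of~$\CC(\F_p)$ in $\CJ(\F_p)/N\CJ(\F_p)$ with the images of the cosets of $N\CJ(\Q)$ represented by the known points, one shows that every rational point on~$\CC$ differs from a known point by an element of $B\cdot\CJ(\Q)$ for the explicit integer $B \approx 1.1\cdot10^{32}$ above; in particular, modulo $2\CJ(\Q)$ every rational point falls into one of only five cosets, each containing a known point.

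Next, for each of these five classes I would apply the method of~\cite{BMSST}: writing a putative integral point as (known point) $+\,2Q$ with $Q$ expressed in the basis $P_1,\dots,P_4$, its $y$-coordinate gives rise to an $S$-unit equation, and lower bounds for complex and $p$-adic linear forms in logarithms of the relevant algebraic numbers produce the effective but astronomically large bound $\log|x| \le 2\cdot10^{1229}$ valid for every integral point on~$\CC$.

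Finally --- and this is the crux --- I would close the gap with the second stage of the Mordell--Weil sieve of~\cite{BMSST}: sieving with a sufficiently large, carefully chosen modulus one produces a sublattice $L \subset \Z^4$ of index $\approx 2.3\cdot10^{2505}$ whose shortest nonzero vector has squared euclidean length $\approx 2.55\cdot10^{1252}$, and such that under the isomorphism $\Z^4 \stackrel{\cong}{\to} \CJ(\Q)$ of Proposition~\ref{P:ExMW} every rational point on~$\CC$ differs from a known point by the image of a vector in~$L$. An integral point with $\log|x| \le 2\cdot10^{1229}$ corresponds to a lattice vector far shorter than $\sqrt{2.55\cdot10^{1252}}$, hence to the zero vector, and is therefore one of the eleven known points. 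Selecting those with integral $x$-coordinate, and reading off the corresponding solutions of $y^2 - y = x^7 - x$ via $(x,y) \mapsto (x, 2y-1)$ (and its inverse), yields exactly the eight claimed integral solutions. The main obstacle is precisely this matching of scales: the Baker-type bound $10^{1229}$ is far too weak for a direct search, so everything hinges on pushing the sieve far enough that the index of~$L$, and hence the length of its minimal vector, comfortably overshoots the Baker bound while the eleven known points survive all imposed congruences.
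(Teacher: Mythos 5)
Your proposal follows the paper's proof step for step: establish $\CJ(\Q)$ via Proposition~\ref{P:ExMW}, run the Mordell--Weil sieve of~\cite{BruinStollMWS} to pin each rational point into a coset of $B\cdot\CJ(\Q)$ (hence into one of five cosets mod~$2\CJ(\Q)$), invoke~\cite{BMSST} for the Baker bound $\log|x|\le 2\cdot10^{1229}$, and close with the second-stage sieve lattice $L$ whose shortest nonzero vector has squared length $\approx 2.55\cdot10^{1252}$, far exceeding what the Baker bound permits. The reasoning and all numerical inputs agree with the paper, so this is the same argument, just with the scale-matching contradiction spelled out a little more explicitly than the paper's terse ``more than sufficient.''
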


\section{Quadratic twists} \label{S:twists}

Let $F$ be a squarefree octic binary form over a field~$k$ not of characteristic~$2$
and let $c \in k^\times$. Then the Kummer varieties $\CK$ and~$\CK^{(c)}$
associated to~$F$ and to~$cF$, respectively, are isomorphic,
with an isomorphism from the former to the latter being given by
\[ (\xi_1 : \xi_2 : \xi_3 : \ldots : \xi_7 : \xi_8)
     \longmapsto (\xi_1 : c \xi_2 : c \xi_3 : \ldots : c \xi_7 : c^2 \xi_8) .
\]
We can therefore use~$\CK$ as a model for the Kummer variety associated
to the curve $\CC^{(c)} \colon y^2 = c F(x,z)$. This will in general change
the naive height of a point $P \in \CJ^{(c)}(\Q)$, but will not affect
the canonical height, which is insensitive to automorphisms of the ambient~$\BP^7$.
The duplication map is preserved by the isomorphism. This implies that the
height difference bounds of Lemmas~\ref{L:nonarch} and~\ref{L:arch1} for~$F$
apply to~$\CK$, even when $\CK$ is used as the Kummer variety of~$\CC^{(c)}$.
This is because these bounds are valid for all $k_v$-points on~$\CK$,
regardless of whether they lift to points in~$\CJ(k_v)$ or not.
Note, however, that the result of Lemma~\ref{L:regular} does \emph{not}
carry over: in the interesting case, $c$ has odd valuation at~$v$, and
so we are in effect looking at (certain) points on~$\CJ$ defined over a
ramified quadratic extension of~$k_v$. Since in terms of the original
valuation, the possible values of the valuation on this larger field are
now in $\frac{1}{2}\Z$, the argument in the proof of Lemma~\ref{L:regular}
breaks down.

When working with this model, one has to modify the criterion for a point to
lift to~$\CJ(k)$ by multiplying the $\mu_{ijk}$ by~$c$.

As an example, consider the curve given by
\[ \binom{y}{2} = \binom{x}{7} . \]
It is isomorphic to the curve
\[ \CC \colon y^2 = 70 (x^7 - 14 x^5 + 49 x^3 - 36 x + 630) = 70 F(x, 1) \]
where $F$ is the obvious octic binary form. The $2$-Selmer rank of its
Jacobian~$\CJ$ is~$9$, $\CJ(\Q)$ is torsion free, and the subgroup~$G$ of~$\CJ(\Q)$
generated by differences of the 27~small rational points on~$\CC$ has rank~$9$
with LLL-reduced basis
\begin{gather*}
  [(-2, 210) - \infty], \quad [(1, 210) - \infty], \quad [(3, 210) - \infty], \\
  [(2, 210) - \infty], \quad [(-3, 210) - \infty], \quad [(4, 630) - \infty], \\
  [(-\tfrac{5}{2}, -\tfrac{1785}{8}) + (3, 210) + (4, 630) - 3 \infty], \\
  [(0, 210) - \infty], \quad [(6, 3570) - \infty] .
\end{gather*}
We would like to show that these points are actually generators of~$\CJ(\Q)$.

Using the Kummer variety associated
to~$70 F$, we obtain the following bound for $\mu_v$ at the bad primes and infinity
(using the valuations of the resultants~$r(T)$, Lemma~\ref{L:regular}
and Lemma~\ref{L:arch1}):
\begin{gather*}
  \mu_2 \ge -6 \log 2, \quad \mu_3 \ge -\tfrac{10}{3} \log 3, \quad
  \mu_5 \ge -\tfrac{10}{3} \log 5, \quad \mu_7 \ge -\tfrac{8}{3} \log 7, \\
  \mu_{13} = 0, \quad \mu_{17} \ge -\tfrac{2}{3} \log 17 , \quad
  \mu_{15717742643} = 0, \quad \mu_{\infty} \ge -0.6152.
\end{gather*}
The resulting bound $\approx 20.88$ for $h - \hat{h}$ is \emph{much} too large to be useful.

However, using the Kummer variety associated to~$F$, we find that
\begin{gather*}
  \mu_2 \ge -\tfrac{10}{3} \log 2, \quad \mu_3 \ge -\tfrac{10}{3} \log 3, \quad
  \mu_5 \ge -\tfrac{2}{3} \log 5, \quad \mu_7 = 0, \\
  \mu_{13} = 0, \quad \mu_{17} \ge -\tfrac{2}{3} \log 17 , \quad
  \mu_{15717742643} = 0, \quad \mu_{\infty} \ge -0.6152.
\end{gather*}
This gives a bound of $\approx 9.55$ (now for a different naive height),
which is already a lot better, but still a bit too large for practical purposes.
Now one can check that for a point $P \in \CJ(\Q_p)$ with $p \in \{5, 17\}$,
we always have $\kappa(2 P) \in \CK_\good$. This implies that we get a
better estimate
\[ h(2P) \le \hat{h}(2P) + \tfrac{10}{3} \log 6 + 0.6152 \le \hat{h}(2P) + 6.588 \]
for $P \in \CJ(\Q)$. A further study of the situation at $p = 3$ reveals
that $\mu_3$ factors through the component group~$\Phi$ of the N\'eron model
of~$\CJ$ over~$\Z_3$, which has the structure $\Z/3\Z \times \Z/4\Z \times \Z/2\Z$,
and that the minimum of~$\mu_3$ on $2\Phi$ is $-\frac{5}{3} \log 3$.
This leads to
\begin{equation} \label{E:htbd}
  h(2P) \le \hat{h}(2P) + 4.757 .
\end{equation}
We enumerate all points~$P$ in~$\CJ(\Q)$ such that $h(P) \le \log 2000$
using a $p$-adic lattice-based approach with $p = 277$, as follows. For each
of the $10\,965\,233$~points
$\kappa(0) \neq Q \in \CK(\F_p)$ that are in the image of~$\CJ(\F_p)$,
we construct a sublattice~$L_Q$ of~$\Z^8$ such that for every point $P \in \CJ(\Q)$
such that $\kappa(P)$ reduces mod~$p$ to~$Q$, every integral coordinate vector
for~$\kappa(P)$ is in~$L_Q$ and such that $(\Z^8 : L_Q) \ge p^{11}$.
We then search for short vectors in~$L_Q$, thus obtaining all points of
multiplicative naive height $\le 2000$. Note that all these points are smooth
on~$\CK$ over~$\F_p$, since $\#\CJ(\F_p)$ is odd. This computation took about two
CPU weeks. For points reducing to the
origin, we see that the quadratic equation satisfied by points on~$\CK$
forces $\xi_1$ to be divisible by $p^2 > 2000$, so $\xi_1 = 0$, and every
such point must be on the theta divisor. A point $P = [P_1 + P_2 - 2 \cdot \infty] \in \CJ(\Q)$
reduces to the origin if and only if the points $P_1$ and~$P_2$ reduce
to opposite points; in particular, the polynomial whose roots are the $x$-coordinates
of $P_1$ and~$P_2$ reduces to a square mod~$p$. Since the coefficients are bounded
by $7 = \lfloor 2000/p \rfloor$, divisibility of the discriminant by~$p$
implies that the discriminant vanishes, so that $P_1 = P_2$, and the point~$P$
does not reduce to the origin, after all.

We find no point~$P$ such that $0 < \hat{h}(P) < \hat{h}(P_1) \approx 1.619$,
where $P_1$ is a known point of minimal positive canonical height,
and no points~$P$ outside~$G$ such that $\hat{h}(P) < 2.844 \approx \log 2000 - 4.757$.
Since the bound~\eqref{E:htbd} is only valid on~$2 \CJ(\Q)$, this implies that
there are no points $P \in \CJ(\Q)$ with $0 < \hat{h}(P) < 0.711 \mathrel{=:} m$.
Using the bound (see~\cite{FlynnSmart})
\[ I \le \left\lfloor\sqrt{\frac{\gamma_9^9 \det(M)}{m^9}}\right\rfloor \le 1787 \]
for the index of the known subgroup in~$\CJ(\Q)$,
where $\gamma_9$ denotes the Hermite constant for 9-dimensional lattices
and $M$ is the height pairing matrix of the basis of the known subgroup of~$\CJ(\Q)$,
we see that it suffices to rule out all primes up to~$1787$ as possible index
divisors. We therefore check that the known subgroup~$G$ is in fact saturated at all
those primes with the method already introduced in~\cite{FlynnSmart}:
to verify saturation at~$p$, we find sufficiently many primes~$q$ of good
reduction such that $\#\CJ(\F_q)$ is divisible by~$p$ (usually nine such primes
will suffice) and check that the kernel of the natural map
\[ G/pG \To \prod_q \CJ(\F_q)/p \CJ(\F_q) \]
is trivial. This computation takes a few CPU days; the most time-consuming
task is to find $\#\CJ(\F_q)$ for all primes~$q$ up to $q = 322\,781$
(which is needed for $p = 1471$). This gives the following result.

\begin{Theorem}
  The points $[P_j - \infty]$ freely generate~$\CJ(\Q)$, where the $P_j \in \CC(\Q)$
  are the points with the following $x$-coordinates and positive $y$-coordinate:
  \[ -3,\; -2,\; -\tfrac{5}{2},\; 0,\; 1,\; 2,\; 3,\; 4,\; 6 \;. \]
\end{Theorem}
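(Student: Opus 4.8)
The plan is to follow the standard strategy for determining a Mordell--Weil group: bound the rank, exhibit a finite-index subgroup of explicit generators, bound the index, and eliminate every possible index divisor by a saturation check. A $2$-descent on~$\CJ$ as in~\cite{StollDesc} (carried out in Magma) shows the $2$-Selmer group has $\F_2$-dimension~$9$, so $\operatorname{rank}\CJ(\Q)\le 9$; counting points modulo small odd primes of good reduction shows $\CJ(\Q)$ is torsion free. The subgroup $G$ generated by the differences of the $27$ small rational points on~$\CC$ has rank~$9$ --- one checks this by computing canonical heights (via~\cite{MueSto}, or the algorithms of~\cite{Holmes,Mueller}) and verifying that the regulator is nonzero --- so $\CJ(\Q)\cong\Z^9$ and $(\CJ(\Q):G)<\infty$.

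Next I would set up a usable height-difference bound. As in Section~\ref{S:twists}, the key move is to use the Kummer variety~$\CK$ attached to~$F$ rather than to~$70F$, so that Lemmas~\ref{L:nonarch} and~\ref{L:arch1} apply. Combining the resultant estimates for the~$r(T)$ at the bad primes, Lemma~\ref{L:regular} at $p=7$, $13$ and~$15717742643$, and Lemma~\ref{L:arch1} at infinity gives the stated values of the~$\mu_v$. A refinement improves this: at $p=5,17$ every $2$-torsion translate of~$\kappa(2P)$ reduces into~$\CK_\good$, and at $p=3$ the function~$\mu_3$ factors through~$2\Phi$, where $\Phi\cong\Z/3\Z\times\Z/4\Z\times\Z/2\Z$ is the component group of the N\'eron model over~$\Z_3$. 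This yields $h(2P)\le\hat h(2P)+4.757$ for all $P\in\CJ(\Q)$.

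With this in hand one enumerates all $P\in\CJ(\Q)$ with $h(P)\le\log 2000$ by a $277$-adic lattice sieve: for each nonzero $Q\in\CK(\F_{277})$ in the image of~$\CJ(\F_{277})$ one builds a sublattice $L_Q\subset\Z^8$ of index $\ge 277^{11}$ containing every integral coordinate vector of any~$\kappa(P)$ reducing to~$Q$, and searches for short vectors in~$L_Q$; points reducing to the origin are excluded using the quadratic relation on~$\CK$ and a discriminant argument. One finds no point with $0<\hat h(P)<\hat h(P_1)$ and no point outside~$G$ with $\hat h(P)<\log 2000-4.757$; since the bound of the previous paragraph holds only on~$2\CJ(\Q)$, this gives $\hat h(P)\ge m:=0.711$ for all nonzero $P\in\CJ(\Q)$. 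The covering-radius bound of~\cite{FlynnSmart} then gives $(\CJ(\Q):G)\le\lfloor\sqrt{\gamma_9^9\det M/m^9}\rfloor=1787$, and it remains to verify that~$G$ is saturated at every prime $p\le 1787$: for each such~$p$ pick enough primes~$q$ of good reduction with $p\mid\#\CJ(\F_q)$ and check injectivity of $G/pG\to\prod_q\CJ(\F_q)/p\CJ(\F_q)$. This forces $G=\CJ(\Q)$ and identifies the generators as the~$[P_j-\infty]$.

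The main obstacle is computational rather than conceptual: the bound~$4.757$ is far too large for a direct search over~$\CK(\Q)$, so the $277$-adic lattice method in the enumeration step, together with computing $\#\CJ(\F_q)$ for all~$q$ up to roughly $3\cdot10^5$ in the saturation step, accounts for nearly all the effort. A subtler point, specific to the twisted model, is that $\eps_v=0$ and $\mu_v=0$ are no longer equivalent (the reduction of~$F$ can be a square), so one must take $\overline{\kappa(P)}\in\CK_\good(\F)$ as the criterion for the local contribution to vanish, and track the refined contributions at $3$, $5$ and~$17$ carefully.
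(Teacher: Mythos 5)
Your proposal follows the paper's own argument step for step: the $2$-descent rank bound of~$9$, the rank-$9$ subgroup from the $27$ small points, the switch to the Kummer variety of~$F$ rather than~$70F$ for the local height bounds, the refinements at $p=3,5,17$ leading to $h(2P)\le\hat h(2P)+4.757$, the $277$-adic lattice enumeration up to naive height $\log 2000$, the index bound~$1787$ via the Hermite constant, and the saturation check. This is essentially identical to the paper's proof, so there is nothing further to compare.
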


In principle, one could now try to determine the set of integral points
on~$\CC$ with the method we had already used for $y^2 - y = x^7 - x$.
However, a Mordell-Weil sieve computation with a group of rank~9 is a
rather daunting task, which we prefer to leave to the truly dedicated reader.


\end{document}